\newtheorem{theorem}{Theorem}
\newtheorem{mlemma}{Lemma}
\newtheorem{mydef}{Definition}
\newtheorem{myprop}{Proposition}
 \font\msbm=msbm10
\def\dsR{\hbox{{\msbm \char "52}}}
\def\div{\nabla \cdot}
\def\pL{\mathcal{L}}
\def\pB{\mathfrak{b}}
\def\zM{\mathcal{M}}
\def\zF{\mathcal{F}}
\def\zK{\mb{K}}
\def\zxi{\mb{\xi}}
\def\zX{\mb{\mathrm{X}}}
\def\zW{\mb{\mathrm{W}}}
\def\zi{\mb{i}}
\def\zgamma{\tilde{\gamma}}
\def\mgamma{\mb{\gamma}}
\def\zy{\mb{y}}
\def\zv{\mb{v}}
\def\zaph{\mb{\alpha}}
\def\zomega{\mb{\omega}}
\def\zC{\mb{C}}
\def\Cth{c_1}
\def\Ctha{c_2}
\def\Cthb{c_3}
\def\zXe{\mb{\mathrm{X}}_{\rm exact}}
\def\covl{l_c}
\renewcommand{\mathbf}{\boldsymbol}
\newcommand{\mb}{\mathbf}
\newcommand{\Se}{\mathbb{S}}
\newcommand{\norm}[2]{\left\| #1 \right\|_{#2}}
\newcommand{\Pe}{{\mathbb{P}}}
\newcommand{\Ne}{{\mathbb{N}}}
\newcommand{\Ee}{{\mathbb{E}}}
\journal{Journal of Computational Physics}
\begin{document}

\begin{frontmatter}

\title{Rank adaptive tensor recovery based model reduction for partial differential equations with high-dimensional random inputs}


\author[mymainaddress]{Kejun Tang}
\ead{tangkj@shanghaitech.edu.cn}

\author[mysecondaryaddress]{Qifeng Liao\corref{mycorrespondingauthor}}
\cortext[mycorrespondingauthor]{Corresponding author}
\ead{liaoqf@shanghaitech.edu.cn}

\address[mymainaddress]{School of Information Science and Technology, ShanghaiTech University, Shanghai, China}
\address[mysecondaryaddress]{School of Information Science and Technology, ShanghaiTech University, Shanghai, China}

\begin{abstract}
This work proposes a systematic model reduction approach based on rank adaptive tensor recovery for partial differential equation (PDE) models with high-dimensional random parameters. Since the standard outputs of interest of these models are discrete solutions on given physical grids which are high-dimensional, we use kernel principal component analysis to construct stochastic collocation approximations in reduced dimensional spaces of the outputs. To address the issue of high-dimensional random inputs, we develop a new efficient rank adaptive tensor recovery approach to compute the collocation coefficients. 
Novel  efficient initialization strategies for non-convex optimization problems involved in tensor recovery are also developed in this work. We present a general mathematical framework of our overall model reduction approach, analyze its stability, and demonstrate its efficiency with numerical experiments.

\end{abstract}

\begin{keyword}
tensor recovery; model reduction; PDEs; uncertainty quantification
\end{keyword}

\end{frontmatter}


\section{Introduction}\label{section_intro}
During the last few decades there has been a rapid development in surrogate and reduced  order modelling 
for PDE systems with  random inputs. 
The PDE systems are fundamental mathematical models describing
complex physical and engineering problems, which can involve multiple disciplines, 
a large number of input parameters, and multiple sources of uncertainty. 
A main challenge of surrogate modelling for these PDE models is the so-called curse of dimensionality.
First, due to the high complexity of practical problems, the random input parameters are typically high-dimensional.
Second, the standard output of these PDE models is the spatial fields (e.g., temperature, pressure and velocity), 
and their fine resolution representation requires a large number of degrees of freedom, which make the output high-dimensional.

A type of widely used surrogate modelling approach for these PDE models is the stochastic spectral methods \cite{ghanem2003sfem,xiu2002wiener,Xiu2005High,maza2009adaptive,powelm09},
while the high dimensionality of  the random inputs causes difficulties in applying them. 
To alleviate the difficulty, modifications of these methods have been actively introduced by exploiting certain properties of the underlying 
problem. For example,
sparse (generalized) polynomial chaos (gPC) expansions \cite{doostan2011nonsparse,peng2014weighted,yan2012stochastic,jakeman2017generalized,lei2015constructing,guo2018gradient} 
are developed  through using the sparsity in spectral approximations.
Moreover, the stochastic collocation method \cite{Xiu2005High,babuvska2007stochastic,maza2009adaptive} is reformulated as a tensor style quadrature problem in
\cite{zhang2017bigdata}, which shows that the corresponding collocation coefficients 
can be efficiently computed through tensor recovery techniques. 
On the other hand, the high dimensionality in outputs poses challenges in both surrogate modelling and data storage. 
The surrogates proposed to resolve high-dimensional inputs as discussed above are typically restricted to problems with a single output. 
Naively extending them to high-dimensional outputs (building independent surrogates for multiple outputs) is computationally 
infeasible. For making progress, dimension reduction methods for the outputs gain a lot of interests.
For example,  principal component analysis (PCA) and kernel component analysis (kPCA) methods are successfully established for
Gaussian process surrogates \cite{contiohagan2010bayesian,xing2016manifold}. Especially, since kPCA captures highly nonlinear low-rank structures 
in the output space, it can provide dramatically tight representation of the outputs \cite{maza2011kernel,xing2016manifold}. 

In this work, we focus on tensor recovery based stochastic collocation. 
As discussed in \cite{zhang2017bigdata}, gPC coefficients in stochastic collocation can be  
computed through inner products of weight tensors and data tensors (see section \ref{section_col} for details),
where the weight tensors are given but the data tensors are expensive to obtain. Instead of directly evaluating the 
expensive data tensor, tensor recovery here is to use a small number of
entries of the data tensor to recover the whole tensor \cite{Acar2010Scalable,gandy2011tensor,Liu2013Tensor}. A popular recovery strategy is
developed in \cite{Acar2010Scalable} based on canonical polyadic (CP) decomposition. In this recovery approach, the CP rank
of the underlying tensor needs to be known a priori, 
which limits its application to our PDE models where the corresponding CP ranks are not given. 
For this purpose, we develop a novel rank adaptive tensor recovery (RATR) approach, which do not require any 
prior information for the tensor ranks. Moreover, as this kind of tensor recovery procedure requires solving 
a non-convex optimization problem \cite{zhang2017bigdata}, initialization strategies for this kind of optimization
problem are crucial  for successful recovery. In our RATR approach, 
new efficient initializaiton strategies are proposed based on  a hierarchical  rank-one updating procedure, 
and their stability is theoretically proven in this work. 

The aim of this paper is to develop a systematic model reduction framework to curb this challenging
high-dimensional input-output problem, and our overall procedure is as follows. 
First, kPCA is conducted for the outputs, which gives their reduced-dimensional representations. 
After that, for each kPCA mode, RATR based stochastic collocation  is proposed to construct sparse gPC expansions for each 
kPCA mode. 
The inverse mapping method introduced in \cite{maza2011kernel,xing2016manifold}   is finally adopted to construct an 
overall estimates of the outputs in the high-dimensional space. To summarize, 
the main contributions of this work are three-fold: first,  stochastic collocation methods are reformulated with manifold learning
for high-dimensional outputs;
second, a novel rank adaptive tensor recovery (RATR) approach is proposed to recover tensors without knowing their ranks a priori; 
third, new efficient initialization strategies for RATR are proposed and their stability is analyzed.

The rest of the paper is organized as follows. In the next section, we formulate stochastic collocation methods for 
stochastic PDEs based on manifold learning. Details of tensors and standard tensor recovery approaches  
are introduced in section \ref{section_tduq}.  
Our main algorithms and analysis for RATR and the overall RATR-collocation surrogate are presented in section \ref{section_hdoutput}.
In section \ref{section_test}, we demonstrate the efficiency of our RATR-collocation approach for stochastic diffusion and 
incompressible flow problems. Finally section \ref{section_conclude} concludes the paper.

\section{Problem setting and stochastic collocation based on manifold learning}\label{section_problem}
Let $D$ denote a spatial domain (in $\dsR^2$ or $\dsR^3$) which is bounded, connected and with a
polygonal boundary $\partial D$,
and $x$ denote a spatial variable.
Let $\mb{\xi}$
be a vector which collects a finite number of random variables. 
The dimension of $\mb{\xi}$ is denoted by $d$, i.e., we write $\mb{\xi}=[\xi_1,\ldots,\xi_d]^T$. The probability density function 
of $\mb{\xi}$ is denoted by $\pi(\mb{\xi})$.
In this paper, we restrict our attention to the situation that $\mb{\xi}$ has a bounded and connected support.
Without loss of generality, we next assume the support of $\mb{\xi}$ to be  $I^d$ where $I:=[-1,1]$,
since any bounded connected domain in $\dsR^{d}$ can be mapped to $I^d$.
The physics of problems considered in this paper
are governed by 
a PDE over the spatial domain $D$ and
boundary conditions on the boundary $\partial D$.
This PDE problem is stated as: find
$u(x,\mb{\xi}): D\times I^d\to \dsR$, such that
\begin{align}
\pL\left(x,\mb{\xi};u\left(x, \mb{\xi} \right)\right)=f(x)  \qquad
&\forall \left(x,\mb{\xi} \right) \in D\times I^d,
\label{spdexi1}\\
\pB\left(x,\mb{\xi};u\left(x,\mb{\xi} \right)\right)=g(x) \qquad
&\forall \left(x, \mb{\xi} \right)\in \partial D\times I^d,
\label{spdexi2}
\end{align}
where $\pL$ is a partial differential operator and $\pB$ is a boundary
operator, both of which can have random coefficients. $f$ is the source function and $g$ 
specifies the boundary conditions. 
We also define output quantities of interest. 
For each realization of $\mb{\xi}$, if the deterministic version of \eqref{spdexi1}--\eqref{spdexi2}
is solved using  a high-fidelity numerical scheme (simulator), for example finite element 
and difference methods, a natural definition of the output is the discrete solution. 
A high-fidelity discrete solution is also called a {\em snapshot} and can be represented as  
$\mb{y} = [u(x^{(1)},\mb{\xi}), \ldots, u(x^{(N_h)}, \mb{\xi})]^T \in \dsR^{N_h}$, where $u(x^{(i)}, \mb{\xi}), i = 1, \ldots, N_h$ denotes the value of $u(x, \zxi)$ at a specified location on a spatial grid
and $N_h$ refers to the  spatial degrees of freedom.
The manifold consisting of all snapshots is denoted by $\zM\subset\dsR^{N_h}$, and it is assumed to be smooth.
A PDE simulator can be viewed as a mapping $\chi: I^d \rightarrow \zM$, where $I^{d}$ and $\zM \in \dsR^{N_h}$ are the 
input space and the output manifold respectively,
and we denote it as $\chi (\zxi)= \zy = [u(x^{(1)},\mb{\xi}), \ldots, u(x^{(N_h)},\mb{\xi})]^T\in \zM$ for 
an arbitrary realization of the input $\mb{\xi} \in I^{d}$. 

The goal of this study is to build surrogates for conducting uncertainty qualification (UQ) of the output $\zy$, given limited 
training data points $\zy^{(j)} = \chi(\mb{\xi}^{(j)})$ for $j=1:N_t$ where $N_t$ is the size of a training data set. 
We focus on  
the challenging situations that the input and the output are both high-dimensional. To make progress,
we reformulate the stochastic collocation surrogates \cite{maza2009adaptive, Xiu2005High} based on manifold learning and
tensor recovery quadrature.
Manifold learning gives a reduced dimension representation for the output space through 
kernel  principal component analysis (kPCA) and inverse mappings \cite{vms2016gpca, kwok2004preimagekpca, xing2016manifold}, and tenor recovery 
provides estimates of collocation coefficients associated with high-dimensional random parameters 
through exploiting low rank structures in these coefficients \cite{zhang2017bigdata}.  
The rest of this section is to discuss the manifold learning based collocation and the setting of tensor formulation, 
while detailed tensor recovery methods and our new rank adaptive schemes are presented in the next two sections. 

\subsection{Kernel principal component analysis (kPCA)}\label{sec_kpca}
To simplify the presentation, the given training data are denoted by $\zy^{(j)}=\chi (\zxi^{(j)})$ for $j=1,\ldots,N_t$.
Following \cite{vms2016gpca, scholkopf1998kpca,xing2016manifold},  
the  kernel principal component analysis (kPCA) proceeds through two steps: 
mapping the training data to a higher-dimensional feature space, and performing linear
principal component analysis (PCA) in the feature space. 
Denoting the feature space by $\zF$, we define a mapping $\Gamma: \zM \to \zF$, 
which maps each training data point  $\zy^{(j)}\in \zM$ to $\Gamma(\zy^{(j)})\in \zF$ for $j=1,\ldots,N_t$.
A covariance matrix of the mapped data is defined as
\begin{equation} \label{kpca_cov}
\zC_{\zF}:= \frac{1}{N_t}\sum\limits_{j=1}^{N_t} { \widetilde{\Gamma}\left(\mb{y}^{(j)}\right)\widetilde{\Gamma}\left(\mb{y}^{(j)}\right)^T },
\end{equation}
where $\widetilde{\Gamma}(\mb{y}^{(j)}) = \Gamma(\mb{y}^{(j)}) - \bar{\Gamma}$ 
and $\bar{\Gamma} = (1/N_t) \sum_{j=1}^{N_t} {\Gamma (\mb{y}^{(j)})}$. 
Eigenvectors of  $\zC_{\zF}$ can give a new basis to represent the mapped data,
and the eigenvectors associated with dominate eigenvalues can provide an effective reduced dimensional  
representation for them.

However, the mapping $\Gamma$ in practice is typically  defined implicitly through kernel functions,
and the eigenvectors of  $\zC_{\zF}$ are always replaced by eigenvectors of some centred kernel 
matrices. 
A kernel function in this setting is a mapping from $\dsR^{N_h}\times \dsR^{N_h}$ to 
$\dsR$, which is denoted by  $\mb{k}(\cdot,\cdot)$. The kernel matrix associated with  $\mb{k}(\cdot,\cdot)$ 
is denoted by $\zK\in \dsR^{N_h\times N_h}$, of which each  entry is defined as
\begin{equation*}
\mb{K}_{ij} = \mb{k}(\mb{y}^{(i)}, \mb{y}^{(j)}) \quad \text{for} \quad i, j = 1,2,\ldots, N_t.
\end{equation*}  
A standard choice of the kernel function is the Gaussian kernel
\begin{equation*}
\mb{k}(\mb{y}^{(i)}, \mb{y}^{(j)}) = \mathrm{exp} \left( -\frac{ \norm{\mb{y}^{(i)}-\mb{y}^{(j)}}{2}^2 }{ 2 \sigma_g^2} \right),
\end{equation*}
where $\sigma_g$ is the bandwidth parameter.  
The centred kernel matrix is defined as
\begin{equation*}
\widetilde{\mb{K}}: = \left(\mb{K} - \mb{1}_{ \frac{1}{N_t} } \mb{K} - \mb{K} \mb{1}_{\frac{1}{N_t}} + \mb{1}_{\frac{1}{N_t}} \mb{K} \mb{1}_{ \frac{1}{N_t} }\right),
\end{equation*}
where $\mb{1}_{\frac{1}{N_t}}$ denotes the matrix with all entries equaling to $1/{N_t}$. 

Let $\zaph=[\zaph_1,\ldots,\zaph_{N_t}]\in\dsR^{N_t\times N_t}$ collect 
the eigenvectors of $\widetilde{\mb{K}}$ associated with 
eigenvalues $\lambda_1>\lambda_2\ldots>\lambda_{N_t}$.
Basis functions of the mapped data in $\zF$ are 
defined as $\zomega_e:=\sum^{N_t}_{j=1}\widetilde{\alpha}_{je}\widetilde{\Gamma}(\mb{y}^{(j)})$,
where $\widetilde{\alpha}_{je}=\alpha_{je}/\sqrt{\lambda_e}$
and $\alpha_{je}$ is the $j$-th component of $\zaph_e$, for $e=1,\ldots,N_t$.
A mapped training point $\widetilde{\Gamma}(\mb{y}^{(j)})$ for $j=1,\ldots,N_t$ can be represented as 
$\widetilde{\Gamma}(\mb{y}^{(j)}) = \sum_{e=1}^{N_t} \zgamma_e(\mb{y}^{(j)}) \zomega_e$, where 
each coefficient $\zgamma_e(\mb{y}^{(j)})$ is computed through
\begin{eqnarray}\label{kpca_co}
\zgamma_e\left(\mb{y}^{(j)}\right)  = \sum_{i=1}^{N_t} \widetilde{\alpha}_{ie} \widetilde{\mb{K}}_{ij}.
\end{eqnarray}
To result in dimension reduction, the first $N_r$ dominant eigenvectors of $\widetilde{\mb{K}}$ 
are selected as the the principal components, with the criterion 
$(\sum^{N_r}_{e=1} \lambda_e)/(\sum^{N_t}_{e=1} \lambda_e)>tol_{\rm PCA}$ where $tol_{\rm PCA}$ is
a given tolerance. The basis functions associate with the principal components are
then $\zomega_e$, $e=1,\ldots,N_r$, and each mapped training data point 
can be approximated as
$\widetilde{\Gamma}(\mb{y}^{(j)})\approx \sum_{e=1}^{N_r} \zgamma_e(\mb{y}^{(j)}) \zomega_e$.

It can be seen that the overall procedure of kPCA defines a mapping 
from the output manifold $\mathcal{M}$ to the reduced feature space 
$\mathrm{span}\{ \zomega_1 ,  \ldots, \zomega_{N_r}  \}$.
We denote this mapping as $\kappa(\zy) = \sum_{e=1}^{N_r} \zgamma_{e}\left(\zy\right) \zomega_e$,  
where each coefficient $\zgamma_e(\zy)$ is obtained from \eqref{kpca_co}.
The basis $\{\zomega_e\}^{N_r}_{e=1}$ discussed above depends on the mapping $\widetilde{\Gamma}$ which are defined implicitly. 
Collecting these coefficients, a reduced output vector is denoted 
by $\tilde{\mgamma}(\zy):=[\zgamma_1(\zy),\ldots,\zgamma_{N_r}(\zy)]^T\in \dsR^{N_r}$ for any $\zy\in \zM$.
The manifold consisting of all reduced output vectors is denoted by $\zM_r$.
We next denote
$\mgamma(\zxi):= \tilde{\mgamma}(\zy) \in \zM_r\subset \dsR^{N_r}$.
In summary, each training data point $\zy^{(j)}=\chi (\zxi^{(j)})\in \zM$ is mapped to  
$\mgamma(\zxi^{(j)}):=[\gamma_1(\zxi^{(j)}),\ldots,\gamma_{N_r}(\zxi^{(j)})]^T\in \zM_r$ for $j=1,\ldots,N_t$.
We next construct stochastic collocation surrogates for each component of  $\mgamma(\zxi)$. 

\subsection{Stochastic collocation}\label{section_col}
For each $\gamma_{e}(\mb{\xi})$, $e = 1, \ldots, N_r$, a truncated generalized polynomial chaos (gPC) 
approximation \cite{ghanem2003sfem,xiu2002wiener} can be written as
\begin{equation} \label{eq_gpcfs}
\gamma_{e}(\mb{\xi}) \approx \gamma^{\rm gPC}_{e}(\mb{\xi}):= \sum_{\norm{\mb{i}}{1}=0}^{p}{ c_{e\mb{i}}\Phi_{\mb{i}}(\mb{\xi}) },
\end{equation}
where $\mb{i}=[i_1,i_2,\ldots,i_d]^T \in \Ne^{d}$ is a multi-index, $\norm{\mb{i}}{1} = i_1 + i_2 + \cdots + i_d$,
and $p$ is a given oder for truncation. Denoting the set of the multi-indices by $\Upsilon:=\{\mb{i} |\, \mb{i} \in \Ne^{d} \textrm{ and } \norm{\mb{i}}{1}=0,\ldots,p\}$, which implies that the number of basis function is $|\Upsilon| = (p+d)!/(p!d!)$. 
The basis functions $\{\Phi_{\mb{i}}(\mb{\xi}) |\, \mb{i} \in \Upsilon \}$
are orthogonal polynomials with respect to the density function $\pi(\mb{\xi})$ 
\begin{equation*}
\langle \Phi_{\mb{i}}(\mb{\xi}), \Phi_{\mb{i}^{\prime}}(\mb{\xi})   \rangle = \int_{I^d} {\Phi_{\mb{i}}(\mb{\xi})\Phi_{\mb{i}'}(\mb{\xi}) \pi(\mb{\xi}) d\mb{\xi}  } =  \delta_{\mb{i},\mb{i}'},
\end{equation*}  
where $\delta$ denotes the Kronecker delta function, i.e., $\delta_{\mb{i}, \mb{i}'}=1$ if $\mb{i}$
is the same as $\mb{i}'$ and $\delta_{\mb{i}, \mb{i}'}=0$ otherwise. 
Each basis function $\Phi_{\mb{i}}(\mb{\xi})$ can be expressed as the product of a set of univariate orthogonal polynomials, $\Phi_{\mb{i}}(\mb{\xi}) = \prod_{k=1}^{d} {\phi_{i_k}(\xi_k)}$,
with each univariate orthogonal polynomial  
defined through a three term recurrence \cite{gautschi1982generating},
\begin{equation*}
\begin{aligned}
\phi_{j+1}(\xi) = (\xi - \zeta_j) & \phi_{j}(\xi) - \tau_j \phi_{j-1}(\xi), \quad j = 1,2,\ldots, p-1, \\
\phi_0(\xi) = 0, & \quad \phi_{1}(\xi) = 1,
\end{aligned}
\end{equation*}
where $\zeta_j = {\int_{-1}^{1} \xi \pi_{\xi}(\xi) \phi_j^2(\xi) d\xi} / {\int_{-1}^{1} \pi_{\xi}(\xi) \phi_j^2(\xi) d\xi }$,  
$\tau_j = {\int_{-1}^{1} \pi_{\xi}(\xi) \phi_j^2(\xi) d\xi}/{\int_{-1}^{1} \pi_{\xi}(\xi) \phi_{j-1}^2(\xi) d
	\xi }$, and $\pi_{\xi}(\xi)$ is the marginal density function of $\xi$ ($\xi$ denotes a component of $\zxi$).

According to orthogonality of the gPC basis functions, 
the coefficients in \eqref{eq_gpcfs} can be computed through 
\begin{equation} \label{eq_coeff_gpc}
c_{e\mb{i}} = \int_{I^d} \gamma_e(\mb{\xi}) \Phi_{\mb{i}}(\mb{\xi}) \pi(\zxi)d\mb{\xi}. 
\end{equation}
This integral can be computed through quadrature rules, 
and following \cite{zhang2017bigdata} we focus on the tensor style quadrature.
Let $\{\xi^{(j)},w^{(j)}\}^n_{j=1}$ denote $n$ quadrature nodes and weights on the interval $[-1,1]$.
The quadrature form of \eqref{eq_coeff_gpc} is
\begin{equation} \label{eq_quad_coeff}
c_{e\mb{i}} = \sum_{1\leq j_1,\ldots,j_d\leq n} {\gamma_e(\mb{\xi}_{j_1\ldots j_d}) \Phi_{\mb{i}}(\mb{\xi}_{j_1\ldots j_d}) w_{j_1\ldots j_d} },
\end{equation}
where 
\begin{eqnarray}
\zxi_{j_1\ldots j_d} &=& \left[\xi_1^{(j_1)}, \xi_2^{(j_2)}, \ldots, \xi_d^{(j_d)}\right]^T,\label{eq_xij}\\
w_{j_1\ldots j_d} &=& w^{(j_1)} w^{(j_2)} \cdots w^{(j_d)},
\end{eqnarray}
for $1\leq j_1,\ldots,j_d\leq n$ are the nodes and the weights spanned by 
the tensor product of the one-dimensional quadrature rule. 

Following \cite{zhang2017bigdata}, the quadrature form \eqref{eq_quad_coeff} can be
formulated as a tensor inner product as follows.
For each $e=1,\ldots,N_r$, the  values $\gamma_e(\mb{\xi}_{j_1\ldots j_d})$ for $1\leq j_1,\ldots,j_d\leq n$
form a $d$-th order {\em data tensor} $\zX_e \in \dsR^{n \times \cdots \times n}$,
of which each entry is 
\begin{eqnarray}
\mb{\mathrm{X}}_e(j_1,\ldots, j_d)=\gamma_e(\mb{\xi}_{j_1\ldots j_d}).\label{eq_data_tensor}
\end{eqnarray}
For each multi index $\mb{i}$ with $\norm{\mb{i}}{1}\leq p$,
the values $\Phi_{\zi}(\zxi_{j_1\ldots j_d}) w_{j_1\ldots j_d}$ for $1\leq j_1,\ldots,j_d\leq n$
form a  $d$-th order {\em weight tensor} $\mb{\mathrm{W}}_{\zi}\in \dsR^{n \times \cdots \times n}$ with 
\begin{eqnarray}
\mb{\mathrm{W}}_{\zi}(j_1,\ldots,j_d)=\Phi_{\zi}(\zxi_{j_1\ldots j_d}) w_{j_1\ldots j_d}. \label{eq_weigth_tensor}
\end{eqnarray}
Defining
\begin{equation} \label{eq_reweight}
\hat{\mb{w}}_k^{(i_k)} = \left[\phi_{i_k}\left(\xi^{(1)}\right) w^{(1)}, \phi_{i_k}\left(\xi^{(2)}\right) w^{(2)}, \ldots, \phi_{i_k}\left(\xi^{(n)}\right) w^{(n)} \right]^T \in \dsR^{n}, \quad \textrm{for} \quad k=1,\ldots,d,
\end{equation} 
each entry of  $\mb{\mathrm{W}}_{\mb{i}}$ can be written as
\begin{equation*}
\mb{\mathrm{W}}_{\mb{i}}(j_1,j_2,\ldots,j_d) = \hat{\mb{w}}_1^{(i_1)}(j_1) \hat{\mb{w}}_2^{(i_2)}(j_2) \cdots \hat{\mb{w}}_d^{(i_d)}(j_d) \quad \text{for all} \quad 1 \leq j_k \leq n, \ k = 1, \ldots, d,
\end{equation*}
and $\mb{\mathrm{W}}_{\mb{i}}$ can be expressed as
\begin{equation}
\mb{\mathrm{W}}_{\mb{i}} = \hat{\mb{w}}_1^{(i_1)} \circ \hat{\mb{w}}_2^{(i_2)} \circ \cdots \circ \hat{\mb{w}}_d^{(i_d)} \quad \textrm{with} \quad \zi =[i_1, \ldots, i_d]^T,
\label{eq_weight_tensor}
\end{equation}	  
where ``$\circ$" is the vector outer product. 
With the notation above, 
the coefficient $c_{e\mb{i}}$ in \eqref{eq_quad_coeff} can be rewritten as the tensor inner product
\begin{equation} \label{eq_teninner_coeff}
c_{e\mb{i}} = \left<\mb{\mathrm{X}}_e, \mb{\mathrm{W}}_{\mb{i}} \right>,
\end{equation}
where the tensor inner product  \cite{Lathauwer2000A,Kolda2009Tensor} is defined as,
\begin{equation}
\left< \mb{\mathrm{X}}_e, \mb{\mathrm{W}}_{\mb{i}} \right> = \sum_{j_1}^{n}\sum_{j_2}^{n}\cdots \sum_{j_d}^{n} {\mb{\mathrm{X}}_e(j_1,j_2,\ldots,j_d) \mb{\mathrm{W}}_{\mb{i}}(j_1,j_2,\ldots,j_d)}.
\label{eq_tensor_inner0}
\end{equation} 
The tensor norm induced by this inner product is denoted by 
$\norm{\cdot}{} = \langle\cdot, \cdot \rangle^{1/2}$.
Details of tensor decomposition and recovery are discussed  in section \ref{section_tduq}
and section \ref{section_hdoutput}.
\subsection{Inverse mapping}\label{section_inverse}
After the gPC approximation \eqref{eq_gpcfs} for each $\gamma_e$, $e=1,\ldots,N_r$, is constructed through 
the above collocation procedure, the reduced output $\gamma(\zxi)=\tilde{\mgamma}(\zy)=\tilde{\mgamma}(\chi (\zxi))\in \zM_r$ 
for an arbitrary realization of $\zxi$
can be cheaply estimated through this gPC surrogate. However, our goal is to quantify the uncertainties in the
output $\zy=\chi (\zxi)\in \zM$, which requires an inverse mapping $\kappa^{-1}$ from the reduced 
output manifold $\mathcal{M}_r$ to the original output manifold $\mathcal{M}$.
Following \cite{kwok2004preimagekpca,xing2016manifold}, an inverse mapping can be obtained through 
an interpolation of neighbouring points in the training data set $\{\zy^{(1)},\ldots,\zy^{(N_t)}\}$.
That is, the Euclid distance between an arbitrary output $\zy\in \zM$ and
each training point $\zy^{(j)}$ (for $j=1,\ldots,N_t$) is first
computed through 
\begin{equation*}
d_{j} = \sqrt{-2 \sigma_g^2 \mathrm{log} \left(1 - 0.5 \hat{d}_{j}^2 \right)},
\end{equation*}
where $\hat{d}_{j} = 1 + \tilde{\mgamma}(\zy)^T \mb{K} \tilde{\mgamma}(\zy) - 2 \tilde{\mgamma}(\zy)^T \mb{\mathrm{k}}_{\mb{y}^{(j)}}$ are computed through the kernel function, $\mb{\mathrm{k}}_{\zy^{(j)}} = [ \mb{k}(\mb{y}^{(j)},\mb{y}^{(1)}), \ldots, \mb{k}(\mb{y}^{(j)}, \mb{y}^{(N_t)}) ]^T$ and $\mb{k}(\cdot.\cdot)$ is the given
kernel function.
The distances $\{d_1,\ldots,d_{N_t}\}$ are sorted next. Given a positive integer $N_n$, the indices with
the smallest $N_n$ distances are collected in a set $\mathcal{J}\subset \{1,\ldots,N_t\}$, i.e., $d_j\leq d_i$ for any $j,i=1,\ldots,N_t$ 
with $j\in \mathcal{J}$ but $i\notin \mathcal{J}$. After that, $\zy$ can be approximated as
\begin{equation} \label{eq_inverse_map}
\zy \approx \sum_{j\in \mathcal{J} }  \frac{d_{j}^{-1}}{ \sum\limits_{j\in \mathcal{J}} d_{j}^{-1} } \zy^{(j)}.
\end{equation}

\section{Tensor recovery based quadrature}\label{section_tduq}
It is clear that the main computational cost of the above collocation procedure based on manifold learning comes from 
generating the gPC expansion \eqref{eq_gpcfs} for each kPCA mode $e=1,\ldots,N_r$,
where evaluating each collocation coefficient requires computing a tensor inner product \eqref{eq_teninner_coeff}.
When the input parameter $\zxi$ is high-dimensional ($d$ is large), each data tensor $\zX_e\in \dsR^{n\times \cdots\times n}$ is large (with $n^d$ entries). 
Evaluating each entry of $\zX_e$ requires computing a snapshot (see \eqref{eq_data_tensor}), and it is therefore expensive to form these data tensors through 
computing snapshots for all entries. As an alternative, tensor recovery methods provide efficient estimates of tensors using a small number of exact entries. 
For forward UQ problems with a single output, when tensor ranks are given, a tensor recovery based collocation approach
is developed in  \cite{zhang2017bigdata}, which can be applied to construct the gPC approximation for each kPCA component \eqref{eq_gpcfs}. 
We here review this  tensor recovery based collocation approach and provide new detailed computational cost assessments. 
Since computation procedures for generating the gPC surrogates for each $\gamma_{e}(\mb{\xi})$, $e=1,\ldots,N_r$, are identical,  
we generically denote the data tensor $\zX_e$ defined in  \eqref{eq_data_tensor} as $\zXe$ in this section
 (i.e., the subscript $e$ is temporally ignored).


\subsection{Canonical polyadic (CP) decomposition}
Following the presentation in  \cite{Kolda2009Tensor}, the CP decomposition is reviewed as follows.
For a $d$-th order tensor $\zX \in \dsR^{n \times \cdots \times n}$, its CP decomposition is expressed as
\begin{equation} \label{cp_decomp}
\mb{\mathrm{X}} = \sum_{r=1}^{R} {\zv_1^{(r)} \circ \zv_2^{(r)} \circ \cdots \circ \zv_d^{(r)} }
\end{equation}
where  $\mb{v}_k^{(r)}\in\dsR^{n}$ for $k=1,\ldots,d$, $R$ is the CP rank of $\zX$, and  ``$\circ$" is the vector outer product.
The CP rank is defined as 
\begin{equation*}
R:= \mathrm{rank}(\zX) := \text{min} \ \left\{R' \ \left| \ \mb{\mathrm{X}} = \sum_{r=1}^{R'} {\mb{v}_1^{(r)} \circ \mb{v}_2^{(r)} \circ \cdots \circ \mb{v}_d^{(r)} }\right. \right\}.
\end{equation*} 
Figure \ref{fig_cp_tensor} shows a third-order tensor with its CP decomposition.
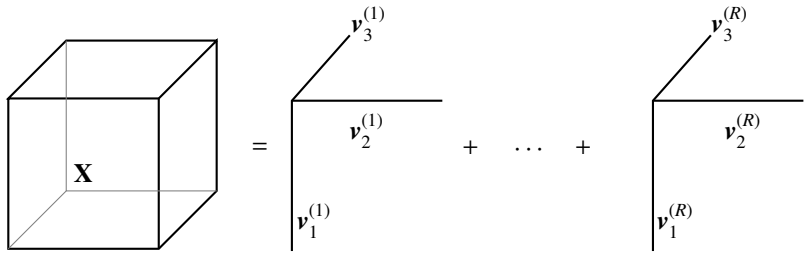
\begin{figure}[!htp]
	\centering
	\begin{tikzpicture}
	\draw[thick](2,2,0)--(0,2,0)--(0,2,2)--(2,2,2)--(2,2,0)--(2,0,0)--(2,0,2)--(0,0,2)--(0,2,2);
	\draw[thick](2,2,2)--(2,0,2);
	\draw[gray](2,0,0)--(0,0,0)--(0,2,0);
	\draw[gray](0,0,0)--(0,0,2);
	\draw(1,1,2) node{$\mb{\mathrm{X}}$}; 
	\draw(4.5,2.5,5) node{$=$};
	\draw(7.3,2.5,5) node{$+$};
	\draw(8.1,2.45,5) node{$\cdots$};
	\draw(8.8,2.5,5) node{$+$};
	\draw[thick](3,-0.8,0)--(3,1.2,0)--(3,1.3,-2);
	\draw(3.3,-0.4,0) node{$\mb{v}_1^{(1)}$};
	\draw[thick](3,1.2,0)--(5,1.2,0);
	\draw(4,0.8,0) node{$\mb{v}_2^{(1)}$};
	\draw(3.1,1.3,-2.4) node{$\mb{v}_3^{(1)}$};
	
	\draw[thick](7.8,-0.8,0)--(7.8,1.2,0)--(7.8,1.3,-2);
	\draw(8.1,-0.4,0) node{$\mb{v}_1^{(R)}$};
	\draw[thick](7.8,1.2,0)--(9.8,1.2,0);
	\draw(9,0.8,0) node{$\mb{v}_2^{(R)}$};
	\draw(7.9,1.3,-2.4) node{$\mb{v}_3^{(R)}$};
	\end{tikzpicture} 
	\caption{CP decomposition of a third-order tensor with rank $R$.  }
	\label{fig_cp_tensor}
\end{figure}  
For each $r=1,\ldots, R$, $\zv_1^{(r)} \circ \zv_2^{(r)} \circ \cdots \circ \zv_d^{(r)}$ in \eqref{cp_decomp} is called a rank-one component. 
For each $k=1,\ldots,d$,  the matrix $\mb{A}_k = [\mb{v}_k^{(1)}, \mb{v}_k^{(2)}, \ldots, \mb{v}_k^{(R)}] \in \dsR^{n \times R}$ is called the $k$th-order factor matrix. 
With these factor matrices, the CP decomposition \eqref{cp_decomp} can be rewritten as
\begin{equation}
\mb{\mathrm{X}} = [[ \mb{A}_1, \mb{A}_2, \ldots, \mb{A}_d ]].
\label{eq_cp_factor}
\end{equation}
From \eqref{eq_weight_tensor},  it is clear that each weight tensor is a rank-one tensor, and  we here generically denote it as 
$\mb{\mathrm{W}}:=\mb{w}_1 \circ \mb{w}_2 \circ \cdots \circ \mb{w}_d \in \dsR^{n \times \cdots \times n}$. 
Following \cite{Kolda2009Tensor}, the inner product of $\mb{\mathrm{X}}$, $\mb{\mathrm{W}}$   (for computing \eqref{eq_teninner_coeff})
can be efficiently computed as
\begin{eqnarray} 
\left< \mb{\mathrm{X}}, \mb{\mathrm{W}} \right> &=& \left<[[\mb{A}_1, \mb{A}_2, \ldots, \mb{A}_d]], \mb{w}_1 \circ \mb{w}_2 \circ \cdots \circ \mb{w}_d  \right> \nonumber\\
& =& \sum_{r=1}^{R} \left< \mb{v}_1^{(r)} \circ \mb{v}_2^{(r)} \circ \cdots \circ \mb{v}_d^{(r)}, \mb{w}_1 \circ \mb{w}_2 \circ \cdots \circ \mb{w}_d  \right> \nonumber\\
& = &\sum_{r=1}^R \sum_{j_1,j_2,\ldots,j_d}{\mb{v}_1^{(r)}(j_1)\mb{v}_2^{(r)}(j_2)\cdots \mb{v}_d^{(r)}(j_d) \mb{w}_1(j_1) \mb{w}_2(j_2)\cdots \mb{w}_d(j_d)} \nonumber\\
& = &\sum_{r=1}^R \left( \left(\sum_{j_1} \mb{v}_1^{(r)}(j_1) \mb{w}_1(j_1)\right) \left(\sum_{j_2} \mb{v}_2^{(r)}(j_2) \mb{w}_2(j_2)\right) \cdots \left(\sum_{j_d} \mb{v}_d^{(r)}(j_d) \mb{w}_d(j_d)\right) \right) \nonumber\\
& =& \sum_{r=1}^{R} \left( \prod_{k=1}^d \left<\mb{v}_k^{(r)}, \mb{w}_k \right> \right). \label{eq_teninner}
\end{eqnarray}
The cost for computing the tensor inner product $\langle \zX, \mb{\mathrm{W}} \rangle$ through \eqref{eq_teninner} is $O(dnR)$,
while the cost is $O(n^d)$ if the inner product is computed directly through its definition \eqref{eq_tensor_inner0}.

\subsection{Missing data tensor recovery} \label{sec_cpl1_reg}
Denoting the full index set for a $d$-th order tensor in $\dsR^{n \times \cdots \times n}$ as 
$\Theta_{\rm full}:=\{[j_1, j_2, \ldots, j_d]^T \in \Ne^d \, | \,  j_k=1,\ldots, n  \textrm{ for }  k = 1,\ldots, d\}$,
$\Theta\subset \Theta_{\rm full}$  denotes an observation index set, which consists of $|\Theta| \ll d^n$ indices
uniformly sampled from $\Theta_{\rm full}$. 
To numbering the elements in $\Theta$, the following sort operator  is introduced.
\begin{mydef}[Sort operator]\label{def:sort}
	For a given finite set $\Theta\subset \Ne^{d}$,  we first sort its elements in alphabetical order: for any two different indices $\mb{\hat{j}}=[\hat{j}_1,\ldots, \hat{j}_d]^T$ and 
	$\tilde{\mb{j}}=[\tilde{j}_1,  \ldots, \tilde{j}_d]^T$ belonging in $\Theta$, 
	$\mb{\hat{j}}$ is ordered before $\tilde{\mb{j}}$ if for the smallest number $k$ such that $\hat{j}_k \neq \tilde{j}_k$, we have $\hat{j}_k<\tilde{j}_k$.
	Then for any $\mb{j}\in \Theta$, $s(\Theta, \mb{j})\in\{1,\ldots,|\Theta|\}$ is defined to be the position of $\mb{j}$ in the sorted array.
\end{mydef}

A projection operator that takes tensor values over the observed indices  
are denoted by $\Pe_{\Theta}$: for any $d$-th order tensor $\zX\in \dsR^{n \times \cdots \times n}$,
$\Pe_{\Theta}(\zX):=[p_1,\ldots,p_{|\Theta|}]^T \in \dsR^{|\Theta|}$ where $p_{s(\Theta,\,\mb{j})}=\zX(j_1,\ldots , j_d)$ for all $\mb{j}\in\Theta$. 
Tensor recovery here is to find an approximation of the data tensor $\zXe$ based on the entries over the observation indices, i.e., 
$\Pe_{\Theta}(\zXe)$. Since it is assumed that $|\Theta| \ll d^n$, the cost for generating $\Pe_{\Theta}(\zXe)$ is small compared 
with the cost for generating the whole $\zXe$. 
When the CP rank of $\zXe$ (denoted by $R$) is given, Acar et al.\ \cite{Acar2010Scalable} formulate the tensor recovery problem
as the following optimization problem
\begin{equation} \label{cp_missvalue}
\begin{aligned}
\underset{\mb{\mathrm{X}}}{\textrm{min}} \quad & \frac{1}{2} \norm{\Pe_{\Theta}(\mb{\mathrm{X}})-\Pe_{\Theta}(\zXe)}{2}^2 \\
\text {s.t.} \quad  \mb{\mathrm{X}} &= \sum_{r=1}^{R} {\mb{v}_1^{(r)} \circ \mb{v}_2^{(r)} \circ \cdots \circ \mb{v}_d^{(r)} },
\end{aligned}
\end{equation}  
It is clear that evaluating $\Pe_{\Theta}({\zX})$ requires $O(|\Theta|dR)$ flops.

To take the sparsity of gPC coefficients \eqref{eq_gpcfs} into account (Cf. \cite{doostan2011nonsparse,yan2012stochastic} for
sparse gPC approximations), 
a $l_1$ regularized version of  \eqref{cp_missvalue} is formulated as follows  \cite{zhang2017bigdata}: 
\begin{equation} \label{cp_missvalue_reg}
\underset{\mb{A}_1, \mb{A}_2, \ldots, \mb{A}_d} {\text{min}} \quad J(\mb{A}_1, \mb{A}_2, \ldots, \mb{A}_d) = \frac{1}{2} \norm{\Pe_{\Theta} \left( [[\mb{A}_1, \mb{A}_2, \ldots, \mb{A}_d]]\right)-\Pe_{\Theta}\left(\zXe \right) }{2}^2 + \beta \sum\limits_{\norm{\mb{i}}{1}=0}^{p} |\left<[[\mb{A}_1, \mb{A}_2, \ldots, \mb{A}_d]], \mb{\mathrm{W}_{\mb{i}}} \right>|,     
\end{equation} 
where $\beta$ is a regularization parameter, $p$ is a given gPC order,  
and $\mb{A}_k$ for $k=1,\ldots,d$ are CP factor matrices of $\zX$ (see \eqref{eq_cp_factor}).
To solve (\ref{cp_missvalue_reg}), the alternative minimization iterative method can be  applied \cite{zhang2017bigdata}. 
Letting $\mb{A}_k^{(q)}$ for $k=1,\ldots,d$ be the CP factor matrices at $q$-th iteration step ($q\geq 0$ is an integer), each CP factor matrix $\mb{A}_k^{(q+1)}$ at iteration step $q+1$ is obtained through  
\begin{equation} \label{cpl1_altform}
\mb{A}_k^{(q+1)} = \text{arg} \underset{\mb{A}_k} {\text{ min}} \ J(\mb{A}_1^{(q+1)}, \ldots, \mb{A}_{k-1}^{(q+1)}, \mb{A}_k, \mb{A}_{k+1}^{(q)}, \ldots, \mb{A}_{d}^{(q)}),
\end{equation}
which leads to  a generalized lasso problem and is discussed next. 

\subsection{A generalized lasso problem}
Let $\mathrm{vec}(\mb{A})$ denote the vector form of a given matrix $\mb{A}$ (as implemented in the MATLAB function $\textbf{reshape}$).
Following the procedures discussed in \cite{zhang2017bigdata}, (\ref{cpl1_altform}) can be written as a generalized lasso problem 
\begin{equation} \label{glasso}
\mathrm{vec}\left(\mb{A}_k^{(q+1)}\right) = \text{arg} \underset{\mb{s}} {\text{ min}} \frac{1}{2} \norm{\mb{B}\mb{s} - \mb{b}}{2}^2 + \beta \norm{\mb{F}\mb{s}}{1},
\end{equation}
where $\mb{B}:=\mathcal{B}_{\Theta,k}([[\mb{A}_1^{(q+1)}, \ldots, \mb{A}_{k-1}^{(q+1)}, \mb{A}_k, \mb{A}_{k+1}^{(q)}, \ldots, \mb{A}_{d}^{(q)}]])$, $\mb{F}:=\mathcal{F}_{\Upsilon,k}([[\mb{A}_1^{(q+1)}, \ldots, \mb{A}_{k-1}^{(q+1)}, \mb{A}_k, \mb{A}_{k+1}^{(q)}, \ldots, \mb{A}_{d}^{(q)}]])$
with $\mathcal{B}_{\Theta,k}(\cdot)$ and $\mathcal{F}_{\Upsilon,k}(\cdot)$ defined as follows (in Definition \ref{def:B} and Definition \ref{def:F} respectively), and $\mb{b} = \Pe_{\Theta}(\zXe) \in \dsR^{|\Theta|}$.
\begin{mydef}\label{def:B}
	For a given observation index set $\Theta\subset \Ne^d$ and $k\in\{1,\ldots,d\}$,   
	the operator $\mathcal{B}_{\Theta,k}$ defines a mapping: 
	for a $d$-th order tensor $[[\mb{A}_1, \mb{A}_2, \ldots, \mb{A}_d]] \in \dsR^{n \times \cdots \times n}$ with rank $R$, entries of $\mb{B}:=\mathcal{B}_{\Theta,k}([[\mb{A}_1, \mb{A}_2, \ldots, \mb{A}_d]] )\in \dsR^{|\Theta| \times nR}$
	are zero except 
	\begin{equation} \label{eq_assemB}
	\mb{B}\left(s(\Theta, \mb{j}), (r-1)n+j_k\right) = 
	\prod\limits_{k^{\prime}=1}^{k-1} \mb{A}_{k^{\prime}}(j_{k^{\prime}},r) \prod\limits_{k^{\prime}=k+1}^{d} \mb{A}_{k^{\prime}}(j_{k^{\prime}},r),   \textrm{ for all } \mb{j} = [j_1,  \ldots, j_d]\in \Theta 
	\textrm{ and } r = 1,\ldots,R, \\
	\end{equation}
	where $s(\Theta,\cdot)$ is the sort operator defined in Definition \ref{def:sort}.
\end{mydef}

\begin{myprop} \label{prop_operator}
	Let $\Theta$ be an observation index set, and $[[\mb{A}_1, \mb{A}_2, \ldots, \mb{A}_d]] \in \dsR^{n \times  \cdots \times n}$ be a $d$-th order tensor with rank $R$. Letting $\mb{A}_{i \delta} := [\mb{A}_i, \mb{\delta a}] \in \dsR^{n \times (R+1)}$ for $i = 1, 2, \ldots, d$,
for any $k\in\{1,\ldots,d\}$,  we have that
	\begin{equation}
	\mathcal{B}_{\Theta, k}([[\mb{A}_{1 \delta}, \mb{A}_{2 \delta}, \ldots, \mb{A}_{d \delta}]]) = [
	\mathcal{B}_{\Theta, k}([[\mb{A}_1, \mb{A}_2, \ldots, \mb{A}_d]]), \mathcal{B}_{\Theta, k}([[\mb{\delta a}, \mb{\delta a}, \ldots, \mb{\delta a}]])
	].
	\end{equation}
\end{myprop}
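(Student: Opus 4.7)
The plan is to verify the identity entry-by-entry using Definition \ref{def:B}. First I would observe that appending the column $\mb{\delta a}$ to each factor matrix $\mb{A}_i$ produces a CP-style expansion of rank (at most) $R+1$, so the left-hand side $\mathcal{B}_{\Theta,k}([[\mb{A}_{1\delta},\ldots,\mb{A}_{d\delta}]])$ is a matrix in $\dsR^{|\Theta| \times n(R+1)}$, while on the right the first block $\mathcal{B}_{\Theta,k}([[\mb{A}_1,\ldots,\mb{A}_d]])$ lies in $\dsR^{|\Theta| \times nR}$ and the second block $\mathcal{B}_{\Theta,k}([[\mb{\delta a},\ldots,\mb{\delta a}]])$ lies in $\dsR^{|\Theta| \times n}$. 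Thus the horizontal concatenation has matching dimensions, and it remains only to compare entries.

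Next I would fix an arbitrary $\mb{j} = [j_1,\ldots,j_d]^T \in \Theta$ together with a column index of the form $(r-1)n + j_k$ for some $r \in \{1,\ldots,R+1\}$, since by Definition \ref{def:B} all entries outside columns of that form vanish on both sides. Substituting the augmented factors into \eqref{eq_assemB} yields
\begin{equation*}
\mathcal{B}_{\Theta,k}([[\mb{A}_{1\delta},\ldots,\mb{A}_{d\delta}]])(s(\Theta,\mb{j}),(r-1)n+j_k) = \prod_{k'=1}^{k-1} \mb{A}_{k'\delta}(j_{k'},r) \prod_{k'=k+1}^{d} \mb{A}_{k'\delta}(j_{k'},r).
\end{equation*}
The core step is a case split on the rank index $r$. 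For $r \le R$, each factor $\mb{A}_{k'\delta}(j_{k'},r)$ equals $\mb{A}_{k'}(j_{k'},r)$ by construction, so the entry matches $\mathcal{B}_{\Theta,k}([[\mb{A}_1,\ldots,\mb{A}_d]])(s(\Theta,\mb{j}),(r-1)n+j_k)$; hence the first $nR$ columns of the left-hand side reproduce the original operator exactly. For $r = R+1$, every factor reduces to $\mb{\delta a}(j_{k'})$, which is by definition the $(s(\Theta,\mb{j}),j_k)$-entry of $\mathcal{B}_{\Theta,k}([[\mb{\delta a},\ldots,\mb{\delta a}]])$, so the final $n$ columns reproduce the second block. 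Collecting the two parts as a block row vector yields the claimed identity.

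The main obstacle is purely bookkeeping: one must track how the column index $(r-1)n + j_k$ partitions the $n(R+1)$ columns of the left-hand side into $R+1$ blocks of width $n$, and handle the degenerate cases $k=1$ and $k=d$ in which one of the products in \eqref{eq_assemB} is empty and equals $1$ by convention. No genuine mathematical difficulty arises; the result is essentially a consistency check that $\mathcal{B}_{\Theta,k}$ acts blockwise in the CP rank direction, which is exactly the structural property that will later allow the rank-adaptive scheme to re-use prior computations when appending a new rank-one term.
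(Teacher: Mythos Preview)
Your proof is correct and takes essentially the same approach as the paper, which simply remarks that the result is straightforward from the construction \eqref{eq_assemB}. You have spelled out the entrywise verification and the case split on $r$ that the paper leaves implicit; no further argument is needed.
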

\begin{proof}
	This proposition is straightforward since the matrix $\mb{B}$ is constructed by \eqref{eq_assemB}.
\end{proof}

\begin{mydef}\label{def:F}
	Denoting the set of the multi-indices in \eqref{eq_gpcfs} as $\Upsilon:=\{\mb{i} |\, \mb{i} \in \Ne^{d} \textrm{ and } |\mb{i}|=0,\ldots,p\}$,
	where $p$ is a given gPC order.
	The operator $\mathcal{F}_{\Upsilon, k}$ defines a mapping: 
	for a $d$-th order tensor $[[\mb{A}_1, \mb{A}_2, \ldots, \mb{A}_d]] \in \dsR^{n \times \cdots \times n}$ with rank $R$ and $k\in\{1,\ldots,d\}$, entries of 
	$\mb{F}:=\mathcal{F}_{\Upsilon, k}([[\mb{A}_1, \mb{A}_2, \ldots, \mb{A}_d]] )\in \dsR^{|\Upsilon| \times nR}$
	are specified as 
	\begin{eqnarray} \label{eq_assemF}
	\mb{F}(s(\Upsilon, \mb{i}), (r-1)n+m_k) = \hat{\mb{w}}_k^{(i_k)}(m_k) \prod\limits_{k^{\prime}=1}^{k-1} \left< \mb{A}_{k^{\prime}}(:,r), \hat{\mb{w}}_{k^{\prime}}^{(i_{k^{\prime}})} \right> \prod\limits_{k^{\prime}=k+1}^{d} \left< \mb{A}_{k^{\prime}}(:,r), \hat{\mb{w}}_{k^{\prime}}^{(i_{k^{\prime}})} \right>, 
	\end{eqnarray}
	for all $ \mb{i} = [i_1, i_2, \ldots, i_d]^T \in \Upsilon$, $m_k = 1, \ldots, n$ and $ r = 1, \ldots, R, $
	where $s(\Upsilon,\cdot)$ is the sort operator defined in Definition~\ref{def:sort} and each $\hat{\mb{w}}_k^{(i_k)}$ is defined in \eqref{eq_reweight}. 
\end{mydef}

The operation numbers to construct $\mb{B}$ and $\mb{F}$ 
are $O(|\Theta| Rd)$ and $O(|\Upsilon|Rn^2d)$ respectively.  
The generalized lasso problem \eqref{glasso} can be solved 
by alternating direction method of multipliers (ADMM) \cite{boyd2011distributed} as follows. 
First, \eqref{glasso} is rewritten as  
\begin{equation} \label{glasso_admm}
\begin{aligned}
&\underset{\mb{s}, \mb{t}} {\text{ min}} \quad  \frac{1}{2} \norm{\mb{B}\mb{s} - \mb{b}}{2}^2 + \beta \norm{\mb{t}}{1} \\
& \ \text{s.t.} \quad \mb{F} \mb{s} = \mb{t}.
\end{aligned}
\end{equation}
The augmented Lagrangian function of (\ref{glasso_admm}) is given by
\begin{equation*}
L(\mb{s},\mb{t},\mb{z},\varrho) = \frac{1}{2} \norm{\mb{B}\mb{s} - \mb{b}}{2}^2 + \beta \norm{\mb{t}}{1} + \left< \mb{z}, \mb{F}\mb{s} - \mb{t} \right> + \frac{\varrho}{2} \norm{\mb{F} \mb{s} - \mb{t}}{2}^2,
\end{equation*}
where $\mb{z} \in \dsR^{|\Upsilon|}$ is the Lagrange multiplier, and $\varrho > 0$ is an augmented Lagrange multiplier. 
Following  \cite{bertsekas1999nonlinear}, the optimization problem \eqref{glasso_admm} can be solved as
\begin{equation*}
\begin{aligned}
\mb{s}^{(i+1)} &= \text{arg} \underset{\mb{s}} {\text{ min}} \ L(\mb{s},\mb{t}^{(i)},\mb{z}^{(i)}, \varrho^{(i)}) \\
\mb{t}^{(i+1)} &= \text{arg} \underset{\mb{t}} {\text{ min}} \  L(\mb{s}^{(i+1)},\mb{t},\mb{z}^{(i)}, \varrho^{(i)}) \\
\mb{z}^{(i+1)} &= \mb{z}^{(i)} + \varrho^{(i)}(\mb{F}\mb{s}^{(i+1)} - \mb{t}^{(i+1)}). 
\end{aligned}
\end{equation*}

Following \cite{boyd2011distributed}, details of the ADMM algorithm for (\ref{glasso_admm}) are summarized Algorithm \ref{alg_glasso}, 
and the soft-thresholding operator  $\Se_{\frac{\beta}{\varrho}}$ on line 4 of  Algorithm \ref{alg_glasso} is defined as   
\begin{equation*}
\Se_{\frac{\beta}{\varrho}} (h) = \begin{cases}
h - \frac{\beta}{\varrho} \ & \text{when} \quad h \geq \frac{\beta}{\varrho}, \\
0 \ & \text{when} \quad |h| < \frac{\beta}{\varrho}, \\
h + \frac{\beta}{\varrho}\ & \text{when} \quad h \leq -\frac{\beta}{\varrho}.
\end{cases}
\end{equation*} 
The cost of using Algorithm \ref{alg_glasso} to solve \eqref{glasso_admm} is analyzed as follows:
\begin{itemize}
	\item updating $\mb{s}^{(k+1)}$ line 3 of Algorithm \ref{alg_glasso} requires a matrix inversion and matrix-vector products, of 
	which the total cost is $O(n^3R^3 + n^2R^2(|\Theta| + |\Upsilon|))$.
	\item updating the soft-thresholding operator and $\mb{z}^{(k+1)}$ requires $O(|\Upsilon|nR)$ operations.
\end{itemize}
Therefore, the total cost of Algorithm \ref{alg_glasso} is 
$C_{Alg\ref{alg_glasso}}: = O(n^3R^3 + n^2R^2(|\Theta|+|\Upsilon|)) + O(|\Upsilon|Rn^2d + |\Theta| Rd)$. 

\begin{algorithm}
	\caption{ADMM for generalized lasso \cite{boyd2011distributed}}
	\label{alg_glasso}
	\begin{algorithmic}[1]
		\Require $\mb{B}, \mb{F}, \mb{b}, \beta, \nu \geq 1 \ (\text{for augment Lagrange multiplier})$
		\State Initialize $\varrho^{(0)}, \mb{s}^{(0)}, \mb{t}^{(0)} = \mb{F}\mb{s}^{(0)},$ and $\mb{z}^{(0)}$, $i = 0$
		\While {not converged}
		\State $\mb{s}^{(i+1)} = (\mb{B}^T \mb{B} + \varrho \mb{F}^T \mb{F} )^{\dagger}(\mb{B}^T \mb{b} + \varrho \mb{F}^T \mb{t}^{(i)} - \mb{F}^T \mb{z}^{(i)})$ 
		\State $\mb{t}^{(i+1)} = \Se_{\frac{\beta}{\varrho}}(\mb{F}\mb{s}^{(i+1)}+\frac{1}{\varrho}\mb{z}^{(i)})$ (element-wise)
		\State $\mb{z}^{(i+1)} = \mb{z}^{(i)} + \varrho (\mb{F} \mb{s}^{(i+1)} - \mb{t}^{(i+1)})$
		\State $\varrho^{(i+1)} = \varrho^{(i)} \nu$
		\State $i = i + 1$
		\EndWhile
		\Ensure $\mb{s}^* = \mb{s}^{(i)}$ and $\mb{A}$ (the matrix form of $\mb{s}^*$).
	\end{algorithmic}
\end{algorithm}

\begin{algorithm}
	\caption{Fixed-rank tensor recovery \cite{zhang2017bigdata}}
	\label{alg_ftr}
	\begin{algorithmic}[1]
		\Require CP rank $R$, initial rank $R$ factor matrices $\mb{A}_k^{(0)}$ for $k = 1, \ldots, d$, $\Theta$, $\Upsilon$ and $\Pe_{\Theta}(\zXe)$.
		\State Let $q=0$ and $\mb{b} = \Pe_{\Theta}(\zXe)$.
	         \State Initialize   $\epsilon_{\text{factor}}\geq \delta,  \epsilon_{J}\geq \delta, \epsilon_{c}\geq \delta $.
		\While {$\epsilon_{\text{factor}} \geq \delta$, 
			$\epsilon_{J} \geq \delta$ or $\epsilon_{c} \geq \delta$} 
		\State $ q = q + 1$.     	  
		\For {$k = 1:d$}
		\State $\mb{B}:=\mathcal{B}_{\Theta,k}([[\mb{A}_1^{(q+1)}, \ldots, \mb{A}_{k-1}^{(q+1)}, \mb{A}_k, \mb{A}_{k+1}^{(q)}, \ldots, \mb{A}_{d}^{(q)}]])$.
		\State $\mb{F}:=\mathcal{F}_{\Upsilon,k}([[\mb{A}_1^{(q+1)}, \ldots, \mb{A}_{k-1}^{(q+1)}, \mb{A}_k, \mb{A}_{k+1}^{(q)}, \ldots, \mb{A}_{d}^{(q)}]])$.
		\State Obtain factor matrix $\mb{A}_k^{(q+1)}$ by Algorithm \ref{alg_glasso}.
		\EndFor
		\State Compute $\epsilon_{\text{factor}},  \epsilon_{J}, \epsilon_{c} $. 
		\EndWhile
		\Ensure CP factor matices $\mb{A}_k = \mb{A}_k^{(q)}$ for $k = 1,2,\cdots,d$ and the recovered tensor $\zX=[[\mb{A}_1, \ldots, \mb{A}_d]]$.
	\end{algorithmic}
\end{algorithm}     

The stopping criterion for the overall optimization problem \eqref{cp_missvalue_reg} is specified through three parts in \cite{zhang2017bigdata}: the relative changes of factor matrices, objective function values, and gPC coefficients. 
The relative change of factor matrices between iteration step $q$ and $q+1$ is defined as
$\epsilon_{\text{factor}}: = {(\sum_{k=1}^d {\|{\mb{A}_k^{(q+1)} -\mb{A}_k^{(q)}}\|_F^2)^{1/2}} }/
( \sum_{k=1}^{d}  \|\mb{A}_k^{(q)} \|_F^2 )^{1/2}$ where $\|\cdot\|_F$ denotes the matrix Frobenius norm. 
The complexity of computing $\epsilon_{\text{factor}}$ is $O(dnR)$. 
Similarly, the relative changes of objective function values and gPC coefficients are defined as
$\epsilon_{J}: = |J^{(q+1)}-J^{(q)}|/|J^{(q)}|$ and $\epsilon_{c}:= \|c^{(q+1)}-c^{(q)}\|_1/\|c^{(q)}\|_1$ respectively,
where $\|\cdot\|_1$ denotes the vector $l_1$ norm and $c^{(q)}$ collects the collocation coefficients \eqref{eq_teninner_coeff} 
obtained with $[[\mb{A}^{(q)}_1, \ldots, \mb{A}^{(q)}_d]]$.
Since evaluating the objective function of \eqref{cp_missvalue_reg}  includes computing the projection $\Pe_{\Theta}$ 
and the tensor inner products,  the cost of computing $\epsilon_J$ and $\epsilon_c$ are $O(|\Theta|dR + |\Upsilon|ndR)$.
For a given tolerance $\delta$, 
the optimization iteration for \eqref{cp_missvalue_reg} terminates if $\epsilon_{\text{factor}} < \delta$, 
$\epsilon_{J} < \delta$ and $\epsilon_{c} < \delta$. The details for solving \eqref{cp_missvalue_reg} is 
summarized Algorithm \ref{alg_ftr}, which is proposed in \cite{zhang2017bigdata}. 
The total cost of Algorithm \ref{alg_ftr} is  $ C_{Alg\ref{alg_ftr}}: = dC_{Alg1} + O((n+|\Theta|+|\Upsilon|n)dR)$.

\section{Rank adaptive tensor recovery for stochastic collocation}\label{section_hdoutput}
Our goal is to perform  uncertainty propagation from a high-dimensional random input vector $\zxi$
to the snapshot $\mb{y} = [u(x^{(1)},\mb{\xi}), \ldots, u(x^{(N_h)}, \mb{\xi})]^T \in \dsR^{N_h}$ which is also high-dimensional.  
For this purpose, we develop a novel rank adaptive tensor recovery collocation 
(RATR-collocation) approach in this section. We first present our new general  rank adaptive tensor recovery (RATR) algorithm
for a general tensor, 
and then analyze its stability. 
After that, we present our main 
algorithm for this high-dimensional forward UQ problem.

\subsection{Rank adaptive tensor recovery (RATR)}\label{sec_ratr}
As discussed in section \ref{section_tduq}, the standard tensor recovery quadrature requires a given rank of the data tensor,
which causes difficulties for problems where the tensor rank is not given a priori. 
Especially in our setting, tensor recovery quadratures are applied to compute the gPC coefficients for each kPCA mode \eqref{eq_gpcfs},
where the ranks of data tensors \eqref{eq_data_tensor} are not given and the data tensor ranks associated with  
different kPCA modes can be different. 
To address this issue, we develop a new rank adaptive tensor recovery (RATR) approach.

Our idea is that, starting with setting the CP rank $R=1$, we gradually increase the CP rank until the recovered tensor $\zX$ 
approximates the exact tensor $\zXe$ well. To measure the quality of the recovered tensor,  
the following quantity of error is introduced 

\begin{equation} 
\varepsilon_{\Theta^{\prime}} (\mb{\mathrm{X}}) = \frac{\norm{\Pe_{\Theta^{\prime}} (\mb{\mathrm{X}}) - \Pe_{\Theta^{\prime}} (\zXe)}{2}}{\norm{\Pe_{\Theta^{\prime}} (\zXe)}{2}},
\label{eq_cv_cpl2}
\end{equation}
where $\Theta$ is the observation index set,  $\Theta'$ is a  validation index set (see \cite{arlot2010cvsurvey} for validation) randomly sampled from 
$\Theta_{\rm full}:=\{[j_1, j_2, \ldots, j_d]^T \in \Ne^d \, | \,  j_k=1,\ldots, n  \textrm{ for }  k = 1,\ldots, d\}$, 
such that $ \Theta'  \cap \Theta = \emptyset $  and $|\Theta'|<|\Theta_{\rm }|\ll d^n$. 
Evaluating the relative error $\varepsilon_{\Theta^{\prime}}(\zX)$ requires $O((|\Theta|+|\Theta^{\prime}|)dR)$ flops, 
which is discussed in section \ref{sec_cpl1_reg}. 

Since the optimization problem \eqref{cp_missvalue_reg} is non-convex, initial factor matrices in Algorithm \ref{alg_ftr} need
to be properly chosen. We provide a detailed analysis of the initialization strategy 
in section \ref{section_hdoutput_initana}.
Here, supposing the tensor  $\zX^{(R)}=[[\mb{A}_1, \ldots, \mb{A}_d]]$ is obtained, where $\{\mb{A}_1, \ldots, \mb{A}_d\}$ is 
the solution of \eqref{cp_missvalue_reg} with rank $R\geq1$,  
we consider one higher rank, i.e., $R+1$. While an analogous approach for tensor completion using tensor train decomposition 
can be found in \cite{MS2016Tensor}, we here focus on CP decomposition and give the following scheme of rank-one update. The initial  factor matrices for rank $R+1$ is set to the rank-one updates 
of the factor matrices of $\zX^{(R)}$, i.e.,
\begin{equation} \label{eq_roupdate}
\mb{A}^{(0)}_k = [
\mb{A}_k , \mb{\delta a}
], \quad \text{for }  k = 1,\ldots,d,
\end{equation}
where $\mb{\delta a} \in \dsR^{n}$ is a random perturbation vector and $\mb{A}_1, \ldots, \mb{A}_d$ are
the factor matrices of $\zX^{(R)}$. 
With these new initial factor matrices, the  recovered tensor 
$\zX^{(R+1)}$
for rank $R+1$ are obtained using Algorithm \ref{alg_ftr}.
To assess the progress obtained through this update  of the CP rank,  the difference between the recovery errors of $\zX^{(R+1)}$
and $\zX^{(R)}$ are assessed through 
${\Delta \varepsilon_{\Theta^{\prime}} := \varepsilon_{\Theta'}(\zX^{(R)}) - \varepsilon_{\Theta'}(\zX^{(R+1)})},$
where $\varepsilon_{\Theta'}(\cdot)$ is computed through \eqref{eq_cv_cpl2}. 
After that, we update the CP rank $R:=R+1$, and the above procedure is repeated until $\Delta \varepsilon_{\Theta'}<0$, i.e.,  $\varepsilon_{\Theta'}(\zX^{(R+1)})> \varepsilon_{\Theta'}(\zX^{(R)})$.

Details of our RATR method are presented in Algorithm \ref{alg_rankadaptive_cpl1}. 
The initial  rank-one matrices $\mb{A}_1^{(0)},\ldots,\mb{A}_d^{(0)}$ in the input are discussed in the next section.
The other inputs are the observation index set $\Theta$, the validation index set $\Theta'$, and entries of the
data tensor (see \eqref{eq_data_tensor}) on these index sets ($\Pe_{\Theta}(\zXe)$ and $\Pe_{\Theta'}(\zXe)$). 
To start the {\bf While} loop of this algorithm, $\Delta \varepsilon_{\Theta'}$ is initially set to an arbitrary number that is larger than $0$ on line 1. The output of this algorithm gives an estimation of the data tensor and its estimated  rank.  
The cost of this algorithm is $C_{Alg\ref{alg_ftr}} + O(|\Theta^{\prime}|dR)$.

\begin{algorithm}
	\caption{Rank adaptive tensor recovery (RATR)}
	\label{alg_rankadaptive_cpl1}
	\begin{algorithmic}[1]
		\Require $\Theta, \Theta^{\prime}, \Upsilon, \Pe_{\Theta}(\zXe), \Pe_{\Theta'}(\zXe)$, and initial rank-one matrices $\mb{A}_k^{(0)}$ 
		for $k = 1,\cdots,d$.
		\State Initialize the CP rank $R:=1$ and set $\Delta \varepsilon_{\Theta'}>0$.
		\State Run Algorithm \ref{alg_ftr} to obtain $\zX^{(1)}$.
		\State Compute $\varepsilon_{\Theta'} \left(\zX^{(1)}\right)$ by (\ref{eq_cv_cpl2}). 
		\While {$\Delta \varepsilon_{\Theta'} \geq 0$}
		\State Initialize $\mb{A}^{(0)}_k:= [\mb{A}_k , \mb{\delta a}]$ for $k = 1,\ldots,d$,
		where each $\mb{A}_k$ is a factor matrix of $\zX^{(R)}$. 
		\State Update the CP rank $R:=R+1$.
		\State Run Algorithm \ref{alg_ftr} to obtain $\zX^{(R)}$.
		\State Compute $\varepsilon_{\Theta'} \left(\zX^{(R)}\right)$ by (\ref{eq_cv_cpl2}).
		\State Compute the relative change in errors 
		$     {\Delta \varepsilon_{\Theta^{\prime}} := \varepsilon_{\Theta'}\left(\zX^{(R-1)}\right) - \varepsilon_{\Theta'}\left(\zX^{(R)}\right)}.
		$
		\EndWhile
		\State Let $\zX := \zX^{(R-1)}$.
		\State Let $R := R-1$.
		\Ensure the CP rank $R$ and the recovered tensor $\zX$. 
	\end{algorithmic}
\end{algorithm}

\subsection{Numerical stability analysis for RATR} \label{section_hdoutput_initana}
While the tensor recovery problem \eqref{cp_missvalue_reg} is a non-convex optimization problem,
the initial guesses for the factor matrices need to be chosen properly. As discussed in 
section \ref{section_tduq}, \eqref{cp_missvalue_reg} is solved using the alternative minimization iterative method, 
where the generalized lasso problem \eqref{glasso} needs to be solved at each iteration step. 
As studied in  \cite{ywt_admmlasso_convergence}, 
\eqref{glasso} becomes ill-defined if  $\mb{B}$
is ill-conditioned. 
Therefore, a necessary condition for the initial factor matrices in \eqref{cp_missvalue_reg} is that 
the resulting matrix $\mb{B}$ (see Definition \ref{def:B}) needs to be well-conditioned. 
In this section, we first show that  if the initial factor matrices 
are sampled through some given distributions, the condition number of $\mb{B}$ is bounded with high 
probability for the case of rank $R=1$. Next, we focus on the rank-one update procedure 
in our RATR approach (on line 5 of Algorithm \ref{alg_rankadaptive_cpl1}),
and show that the condition number of  $\mb{B}$ in \eqref{glasso}  associated this update procedure 
is bounded under certain conditions. 
We begin our analysis with introducing the following definitions.  

\begin{mydef}[Uniform observation index set]\label{def:uniform}
	An observation index set $\Theta$ is uniform if and only 
	if $|\{\mb{j}| \mb{j}=[j_1,\ldots,j_d]\in\Theta \textrm{ and } j_k = i \}| = |\Theta|/n$ for each $i = 1, \ldots, n$ and $k = 1, \ldots, d$, 
	where $|\cdot|$ denotes  the size of a set.  
\end{mydef}

\begin{mydef}[$d$-th order ratio with $m$ degrees of freedom] \label{def:ratio}
	Let $\psi_1, \ldots, \psi_{d-1}$ form a random sample from a given distribution $\Pe$ and for a given positive integer $d$, 
	let $\Psi:= \prod_{j=1}^{d-1} {\psi_j}$. 
	For a given positive integer $m$, let $\Psi_1, \ldots, \Psi_m$ form a random sample from the sampling distribution of $\Psi$, 
	and let $\Xi := \sum_{k=1}^{ m } {\Psi_k ^2}$. 
	The $d$-th order ratio with $m$ degrees of freedom of $\Pe$ is 
	\begin{eqnarray}\label{eq:mu}
	\mu = \frac{\sqrt{\mathrm{Var}(\Xi)}}{\Ee(\Xi)},
	\end{eqnarray}
	where $\Ee(\Xi)$ and $\mathrm{Var}(\Xi)$ are the expectation and the variance of $\Xi$ respectively.
\end{mydef}

Note that $\mu$ in \eqref{eq:mu} is typically called the coefficient of variation of $\Xi$ \cite[p.~845-846]{degroot2012probability}.  
\begin{theorem} \label{Theorem_var_init}
	Let $[[\mb{A}_1, \ldots, \mb{A}_d]] \in \dsR^{n \times \cdots \times n}$ be a $d$-th order rank-one tensor 
	and $\Theta$ be an observation index set. Suppose that for $i = 1, \ldots, d$, all entries of $\mb{A}_i$ form a random sample from distribution $\Pe$.
	Assume that the observation index set $\Theta$ is uniform. 
	For a given constant $\Cth>1$, if the $d$-th order ratio with $|\Theta|/n$ degrees of freedom of $\Pe$ 
	satisfies $\mu<1$, $Q: = (\Cth-1)/(\Cth\mu+\mu) > 1$, then the condition number of $\mb{B} =\mathcal{B}_{\Theta,k}([[\mb{A}_1, \mb{A}_2, \ldots, \mb{A}_d]])$ for $k=1,2,\ldots,d$ (see Definition \ref{def:B}) 
	satisfies  $\mathrm{cond}^2(\mb{B})  \leq \Cth $ with probability at least $1 - 1/Q^2$.
\end{theorem}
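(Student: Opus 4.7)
The plan is to exploit the sparsity that the rank-one assumption forces on $\mb{B}$ in order to reduce the condition-number bound to a single one-dimensional Chebyshev estimate for a random variable whose distribution matches $\Xi$ in Definition \ref{def:ratio}.

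First I would unpack $\mb{B}$ for $R=1$. Each factor matrix $\mb{A}_k$ is a column vector $\mb{v}_k\in\dsR^n$, so by Definition \ref{def:B} the row of $\mb{B}$ indexed by $s(\Theta,\mb{j})$ has a single nonzero entry, located in column $j_k$ with value $\prod_{k'\neq k}\mb{v}_{k'}(j_{k'})$. Consequently $\mb{B}^T\mb{B}\in\dsR^{n\times n}$ is diagonal, and for each $i\in\{1,\ldots,n\}$
$$(\mb{B}^T\mb{B})_{ii} \;=\; \sum_{\mb{j}\in\Theta,\,j_k=i}\;\prod_{k'\neq k}\mb{v}_{k'}(j_{k'})^2 \;=:\; \Xi_i,$$
so that $\mathrm{cond}^2(\mb{B})=\max_i\Xi_i/\min_i\Xi_i$.

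Next I would identify the distribution of each $\Xi_i$. By Definition \ref{def:uniform}, the number of $\mb{j}\in\Theta$ with $j_k=i$ is exactly $m:=|\Theta|/n$; each of these $m$ terms is a product of $d-1$ iid samples from $\Pe$ squared. Hence $\Xi_i$ has the law of the random variable $\Xi$ in Definition \ref{def:ratio}, with $\mathrm{Var}(\Xi_i)=\mu^2\,\Ee(\Xi)^2$. I would then set $s:=(\Cth-1)/(\Cth+1)$, the unique $s\in(0,1)$ satisfying $(1+s)/(1-s)=\Cth$, and verify algebraically that $s=Q\mu$, equivalently $\mu/s=1/Q$. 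If every $\Xi_i$ lies inside $[(1-s)\Ee\Xi,(1+s)\Ee\Xi]$, then
$$\mathrm{cond}^2(\mb{B}) \;\leq\; \frac{(1+s)\Ee\Xi}{(1-s)\Ee\Xi} \;=\; \Cth.$$
A one-index application of Chebyshev gives $\Pe(|\Xi_i-\Ee\Xi|>s\Ee\Xi)\leq \mu^2/s^2 = 1/Q^2$, and the hypothesis $Q>1$ guarantees this is strictly less than $1$, yielding the claimed tail bound.

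The main obstacle is the passage from a single-index Chebyshev estimate to simultaneous control of all $n$ diagonal entries, which is what $\mathrm{cond}^2(\mb{B})\leq \Cth$ truly requires. The $\Xi_i$'s are identically distributed but not independent, because different indices $i$ share entries of the common factor vectors $\mb{v}_{k'}$. A naive union bound over $i=1,\ldots,n$ would produce a failure probability $n/Q^2$, so to arrive at the sharper $1/Q^2$ stated in the theorem one must either exploit the identical marginals of the $\Xi_i$'s by treating the spectral extremes through a single representative (justifying that the worst-case index is distributionally the same as $\Xi$), or explicitly compute the inter-$\Xi_i$ covariances produced by the uniform observation set and show they aggregate into a single Chebyshev-type tail. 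I would expect this step — reconciling the correlation structure with the stated probability — to be the most delicate part of the argument, and the rest of the proof to be routine.
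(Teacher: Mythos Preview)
Your approach is exactly the paper's: diagonality of $\mb{B}^T\mb{B}$ from the rank-one sparsity pattern, identification of each diagonal entry $\Xi_i$ with the random variable $\Xi$ of Definition~\ref{def:ratio}, and a single Chebyshev inequality with deviation $Q\sqrt{\mathrm{Var}(\Xi)}=Q\mu\,\Ee(\Xi)$ to obtain the bound $(1+Q\mu)/(1-Q\mu)=\Cth$.

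The obstacle you flag in your final paragraph is real, and the paper does \emph{not} resolve it. The published proof writes Chebyshev for a single generic $\Xi$, obtaining
\[
\mathrm{Prob}\bigl(\Ee(\Xi)-Q\sqrt{\mathrm{Var}(\Xi)}\le \Xi\le \Ee(\Xi)+Q\sqrt{\mathrm{Var}(\Xi)}\bigr)\ge 1-\tfrac{1}{Q^2},
\]
and then immediately asserts
\[
\mathrm{cond}^2(\mb{B})=\frac{\max_i\Xi_i}{\min_i\Xi_i}\le\frac{\Ee(\Xi)+Q\sqrt{\mathrm{Var}(\Xi)}}{\Ee(\Xi)-Q\sqrt{\mathrm{Var}(\Xi)}}
\]
with probability at least $1-1/Q^2$, without any union bound, covariance computation, or other device to pass from one index to all $n$ simultaneously. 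So your reconstruction is faithful to the paper, and the ``delicate part'' you anticipate is simply absent there; the stated probability $1-1/Q^2$ should be read as the single-marginal Chebyshev bound rather than a rigorously justified simultaneous one.
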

\begin{proof}
	Since the entries of $\mb{B}$ are zero except $\mb{B}\left(s(\Theta, \mb{j}), j_k\right)$  for $\mb{j} = [j_1,  \ldots, j_d]^T\in \Theta$ 
	(see Definition \ref{def:B}) where $s(\Theta, \cdot)$ is the sort operator,  
	each row of $\mb{B}$ can have at most one nonzero entry. 
	Therefore, $\mb{B}^T\mb{B}$ is a diagonal matrix. 
	
	Since  all entries of $\mb{A}_i$ form a random sample from distribution $\Pe$ for $i = 1, \ldots, d$,
	based on  Definition \ref{def:B}, 
	for $\mb{j} = [j_1,  \ldots, j_d]^T\in \Theta$, $k=1,\ldots,d$ and $j_k=1,\ldots,n$,
	we can express $\mb{B}\left(s(\Theta, \mb{j}), j_k\right):=\Psi:= \prod_{j=1}^{d-1} {\psi_j}$, where 
	$\psi_1, \ldots, \psi_{d-1}$ form a random sample from $\Pe$. 
	We next denote $\Psi_{s(\Theta, \mb{j}),j_k}:=\mb{B}\left(s(\Theta, \mb{j}), j_k\right)$,
	where  $\Psi_{s(\Theta, \mb{j}),j_k}$ form a random sample from the sampling distribution of $\Psi$ 
	for $\mb{j} = [j_1,  \ldots, j_d]^T\in \Theta$, $k=1,\ldots,d$ and $j_k=1,\ldots,n$. 
	Therefore, with the assumption that  $\Theta$ is uniform, 
	we have $\mb{B}^T\mb{B}=\mathrm{diag}(\Xi_1,\ldots,\Xi_n)$ with
	$\Xi_{i}:=\sum_{j=1}^{|\Theta|/n } {(\Psi_j)^2}$ for $i=1,\ldots,n$,
	where  $\Psi_{j}$ for $j=1,\ldots,\Theta|/n$ 
	form a random sample from the sampling distribution of $\Psi$. 

	According to the Chebyshev inequality,
	\begin{equation*}
	\mathrm{Prob} \left( |\Xi - \Ee(\Xi)| > Q \sqrt{\mathrm{Var}(\Xi)} \right ) \leq \frac{\mathrm{Var}(\Xi)}{Q^2 \mathrm{Var}(\Xi) },
	\end{equation*}
	which is equivalent to 
	\begin{equation}\label{theorem1-1}
	\mathrm{Prob} \left( \Ee(\Xi) - Q \sqrt{\mathrm{Var}(\Xi)} \leq \Xi \leq \Ee(\Xi) + Q \sqrt{\mathrm{Var}(\Xi)}  \right ) \geq 1 - \frac{1}{Q^2}.
	\end{equation}
	Using  $\mathrm{diag}(\Xi_1,\ldots,\Xi_n)=\mb{B}^T\mb{B}$ gives 
	\begin{equation}\label{theorem1-2}
	\mathrm{cond}^2(\mb{B})  = \frac{\mathrm{max} \{\Xi_1, \ldots, \Xi_n\}}{\mathrm{min} \{\Xi_1, \ldots, \Xi_n\}} 
	\leq \frac{\Ee(\Xi) + Q \sqrt{\mathrm{Var}(\Xi)}}{\Ee(\Xi) - Q \sqrt{\mathrm{Var}(\Xi)}} 
	= \frac{1+Q\mu}{1-Q\mu}.
	\end{equation}
	Noting that $Q := (\Cth-1)/(\Cth\mu+\mu) > 1$ which gives $\Cth=(1+Q\mu)/(1-Q\mu)$, combining \eqref{theorem1-1} and \eqref{theorem1-2} establishes
	\begin{equation*}
	\mathrm{cond}^2(\mb{B})  \leq \Cth
	\end{equation*}
	with probability at least $1 - 1/Q^2$. 
\end{proof}

\begin{table}[!htp]
	\caption{Examples of the $d$-th order ratio with $|m|$ degrees of freedom for several standard distributions
	with $d=48$ and $m=33$ and the corresponding values of $\mathrm{cond}(\mb{B})$.} 
	\centering	
	\begin{tabular}{cccc}  
		\hline
		Distribution $\Pe$  & Estimated $\mu$  & Average $\mathrm{cond}(\mb{B})$ \\ \hline
		$\mathrm{U}(1, 2)$ & 0.0394 & 1.9031 \\
		$\mathrm{N}(9, 0.1)$ & 0.0242 & 1.1491 \\
		$\mathrm{U}(1, 3)$ & 0.1051 & 3.7348 \\
		$\mathrm{N}(9, 0.5)$ & 0.1390 & 1.1830 \\	
		\hdashline
		$\mathrm{U}(0, 1)$ & 231.4262 & $2.27\times 10^{3}$\\ 
		$\mathrm{N}(0, 1)$ & 163.5804 & $1.89\times 10^{4}$ \\ 
		\hline
	\end{tabular}
	\label{table_ratio_mu}
\end{table}

The conditions in Theorem \ref{Theorem_var_init} require that 
$\mu<1$ ($\mu$ is defined in Definition \ref{def:ratio}) and imply $Q\mu<1$, such that  $\mathrm{cond} (\mb{B})$
is bounded above with probability at least $1-1/Q^2$. 
To achieve a high probability for a bounded $\mathrm{cond} (\mb{B})$, $Q$ should be large and $\mu$ should 
then be small. 
So, the initial factor matrices (inputs of Algorithm \ref{alg_ftr})  should be generated using realizations of 
a distribution $\Pe$ of which $\mu$ is small. 
As an example,  we show the estimated 
$\mu$ (the $d$-th order ratio with $|m|$ degrees of freedom) for several standard distributions in
Table \ref{table_ratio_mu}, where we set $d=48$ and $m=33$. 
Here, $\mathrm{U}(a_1,a_2)$ refers to a uniform distribution on the interval $[a_1,a_2]$,
and $\mathrm{N}(a_1,a_2)$ refers to a normal distribution with mean $a_1$ and standard deviation $a_2$.
To compute the estimated $\mu$  in Table \ref{table_ratio_mu}, 
$\Ee(\Xi)$ and $\mathrm{Var}(\Xi)$ in \eqref{eq:mu} are computed using the sample mean and the sample
variance of $10^5$ samples of $\Xi$ (note that the relationship between $\Xi$ and $\Pe$ is stated in Definition \ref{def:ratio}).
In the procedure of generating each sample of $\Xi$, 
the corresponding $\mb{B}$ is formulated and its condition number $\mathrm{cond} (\mb{B})$
is stored (see  Definition \ref{def:ratio} and Theorem \ref{Theorem_var_init} for the relationship between $\Xi$ and
$\mb{B}$). The $10^5$ samples of $\Xi$ are associated with $10^5$ samples of $\mathrm{cond} (\mb{B})$,
and Table \ref{table_ratio_mu} also shows the average of these samples of $\mathrm{cond} (\mb{B})$ 
associated with each distribution. 
As shown in Table \ref{table_ratio_mu}, the distributions listed above the dash line have $\mu<1$, and 
they therefore can be used to generate initial factor matrices, while $\mathrm{U}(0,1)$ and  $\mathrm{N}(0,1)$
should not be used. 

Next, our analysis proceeds through induction. 
That is, supposing for a rank $R$ tensor $\zX^{(R)}=[[\mb{A}_1, \ldots, \mb{A}_d]]$,
its corresponding $\mb{B}$ (see Definition \ref{def:B}) is well-conditioned, we show that 
the matrix $\mb{B}$ associated with $\zX^{(R+1)}=[[\mb{A}^{(0)}_1, \ldots, \mb{A}^{(0)}_d]]$
is also well-conditioned, where $\mb{A}^{(0)}_k = [\mb{A}_k , \mb{\delta a}]$ for  $k = 1,\ldots,d$ are the
rank-one updates of the factor matrices and $\mb{\delta a} \in \dsR^{n}$ is a perturbation vector.  
Before introducing our main theorem (Theorem \ref{Theorem_bound_R}), the following lemma is given. 

\begin{mlemma} \label{lemma}
	Given two matrices $\mb{X}_1 \in \dsR^{n_1 \times n_2}$ and $\mb{X}_2 \in \dsR^{n_1 \times n_3}$ with 
	full column ranks where $n_1 > n_2 \geq n_3$, let their singular value 
	decompositions be $\mb{X}_1 = \mb{U}_1 \mb{\Sigma}_1 \mb{V}_1^T$ and $\mb{X}_2 = \mb{U}_2 \mb{\Sigma}_2 \mb{V}_2^T$, where $\mb{U}_1 \in \dsR^{n_1 \times n_1}$, $\mb{\Sigma}_1\in \dsR^{n_1 \times n_2}$, $\mb{V}_1\in \dsR^{n_2 \times n_2}$, $\mb{U}_2\in \dsR^{n_1 \times n_1}$, 
	$\mb{\Sigma}_2 \in \dsR^{n_1 \times n_3}$ and $\mb{V}_2\in \dsR^{n_3 \times n_3}$,
	and let $\mb{u}_1^{(i)}\in \dsR^{n_1}$ for $i=1,\ldots,n_1$ and $\mb{u}_2^{(j)}\in\dsR^{n_1}$ for $j=1,\ldots,n_1$ denote the left 
	singular vectors of $\mb{X}_1$ and $\mb{X}_2$ respectively. 
	Assume the following two conditions hold: 
	first
	\begin{eqnarray}\label{lemma1-1}
	\mb{\Sigma}_2 \mb{\Sigma}_2^T = \mathrm{diag}(\underbrace {{\lambda, \ldots ,\lambda} }_{n_3}, \underbrace {{0, \cdots ,0} }_{n_1-n_3}),
	\end{eqnarray}
	where $\lambda$ is a positive constant; second
	\begin{eqnarray}\label{lemma1-2}
	\langle \mb{u}^{(j)}_2,  \mb{u}^{(i)}_1 \rangle = 0, \textrm{ for }  j = n_3+1, \ldots, n_1 \textrm{ and }  i = 1, \ldots, n_1.
	\end{eqnarray} 
	Then $\mb{X}_1 \mb{X}_1^T$ and $\mb{X}_2 \mb{X}_2^T$ commute, i.e., 
	$\mb{X}_1 \mb{X}_1^T \mb{X}_2 \mb{X}_2^T = \mb{X}_2 \mb{X}_2^T \mb{X}_1 \mb{X}_1^T$.
	
\end{mlemma}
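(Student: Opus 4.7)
The plan is to show that, up to a scalar, $\mb{X}_2 \mb{X}_2^T$ is the orthogonal projector onto a subspace that contains $\mathrm{range}(\mb{X}_1)$; once this is in place, commutativity follows immediately because an orthogonal projector acts as the identity on vectors already lying in its range.

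First I would expand
\begin{equation*}
\mb{X}_2 \mb{X}_2^T = \mb{U}_2 (\mb{\Sigma}_2 \mb{\Sigma}_2^T) \mb{U}_2^T,
\end{equation*}
and apply hypothesis \eqref{lemma1-1} to rewrite this as $\lambda \mb{P}_2$, where $\mb{P}_2 := \sum_{j=1}^{n_3} \mb{u}_2^{(j)} (\mb{u}_2^{(j)})^T$ is the orthogonal projector onto $V_2 := \mathrm{span}\{\mb{u}_2^{(1)}, \ldots, \mb{u}_2^{(n_3)}\}$. Equivalently, $\mb{I} - \mb{P}_2 = \sum_{j=n_3+1}^{n_1} \mb{u}_2^{(j)} (\mb{u}_2^{(j)})^T$ is the projector onto $V_2^{\perp}$.

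Next I would translate hypothesis \eqref{lemma1-2} into the geometric statement $\mathrm{range}(\mb{X}_1) \subseteq V_2$: for every $j > n_3$, the vector $\mb{u}_2^{(j)}$ is orthogonal to the left singular vectors of $\mb{X}_1$ that span $\mathrm{range}(\mb{X}_1)$, so $V_2^{\perp} \perp \mathrm{range}(\mb{X}_1)$. Because $\mathrm{range}(\mb{X}_1 \mb{X}_1^T) = \mathrm{range}(\mb{X}_1)$, every column of $\mb{X}_1 \mb{X}_1^T$ lies in $V_2$, and so $\mb{P}_2 \mb{X}_1 \mb{X}_1^T = \mb{X}_1 \mb{X}_1^T$. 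Transposing and using symmetry gives $\mb{X}_1 \mb{X}_1^T \mb{P}_2 = \mb{X}_1 \mb{X}_1^T$ as well. Chaining these identities then yields
\begin{equation*}
\mb{X}_2 \mb{X}_2^T \mb{X}_1 \mb{X}_1^T = \lambda \mb{P}_2 \mb{X}_1 \mb{X}_1^T = \lambda \mb{X}_1 \mb{X}_1^T = \lambda \mb{X}_1 \mb{X}_1^T \mb{P}_2 = \mb{X}_1 \mb{X}_1^T \mb{X}_2 \mb{X}_2^T,
\end{equation*}
which is the claimed commutativity (both products equal $\lambda \mb{X}_1 \mb{X}_1^T$).

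The only delicate point is parsing hypothesis \eqref{lemma1-2}: taken literally with $i$ ranging over all of $\{1,\ldots,n_1\}$, the trailing singular vectors $\mb{u}_2^{(j)}$ would be forced to be orthogonal to an orthonormal basis of $\dsR^{n_1}$ and thus vanish, so I would read it (as evidently intended) as orthogonality against the leading columns of $\mb{U}_1$ spanning $\mathrm{range}(\mb{X}_1)$. With this reading pinned down, there is no real obstacle—the remainder is a short chain of projector identities.
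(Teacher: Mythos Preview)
Your argument is correct and takes a genuinely different route from the paper. The paper conjugates everything by $\mb{U}_2$: it sets $\mb{P}=\mb{U}_2^T\mb{U}_1$ and $\mb{X}=\mb{P}\mb{\Sigma}_1\mb{\Sigma}_1^T\mb{P}^T$, then verifies entry by entry that $\mb{X}\mb{\Sigma}_2\mb{\Sigma}_2^T=\mb{\Sigma}_2\mb{\Sigma}_2^T\mb{X}$ (using that the orthogonality hypothesis forces the last $n_1-n_3$ coordinates of each $\mb{p}_i$ to vanish, so $\mb{X}$ lives in the top-left $n_3\times n_3$ block), and finally conjugates back by $\mb{U}_2$. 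Your version bypasses the coordinate bookkeeping by recognizing directly that condition~\eqref{lemma1-1} makes $\mb{X}_2\mb{X}_2^T=\lambda\mb{P}_2$ a scalar multiple of an orthogonal projector, and that condition~\eqref{lemma1-2} forces $\mathrm{range}(\mb{X}_1)\subseteq V_2$; the commutativity then drops out of the one-line projector identity $\mb{P}_2\mb{X}_1\mb{X}_1^T=\mb{X}_1\mb{X}_1^T=\mb{X}_1\mb{X}_1^T\mb{P}_2$. This is shorter and more transparent, and it yields the slightly sharper statement that both products equal $\lambda\mb{X}_1\mb{X}_1^T$, which the paper's entrywise computation does not make explicit.

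Your remark about hypothesis~\eqref{lemma1-2} is also well taken: as literally written (with $i$ ranging over all of $\{1,\ldots,n_1\}$) it would force each $\mb{u}_2^{(j)}$, $j>n_3$, to be orthogonal to an orthonormal basis of $\dsR^{n_1}$, which is impossible. The paper's own proof in fact only ever uses the inner products with $i=1,\ldots,n_2$ (they enter through the sum $\mb{X}=\sum_{i=1}^{n_2}\sigma_i^2\mb{p}_i\mb{p}_i^T$), so your reading---orthogonality against the leading left singular vectors spanning $\mathrm{range}(\mb{X}_1)$---is exactly the one the paper implicitly relies on.
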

\begin{proof}
	Let $\mb{P} = \mb{U}_2^T \mb{U}_1\in\dsR^{n_1\times n_1}$, $\mb{X} = \mb{P} \mb{\Sigma}_1 \mb{\Sigma}_1^T \mb{P}^T\in \dsR^{n_1\times n_1}$ and 
	$\mb{Z} = \mb{X} \mb{\Sigma}_2 \mb{\Sigma}_2^T \in \dsR^{n_1 \times n_1}$. 
	Using \eqref{lemma1-1},  $\mb{Z}(i,j)=0$ for $j=n_3+1,\ldots,n_1$ and $i=1,\ldots,n_1$, 
	while $\mb{Z}(i,j)=\lambda \mb{X}(i,j)$ for $i=1,\ldots, n_1$ and $j=1,\ldots,n_3$.
	Denoting the singular values of $\mb{X}_1\in \dsR^{n_1\times n_2}$ by $\sigma_1,\ldots,\sigma_{n_2}$ (note that $n_1>n_2$) gives 
	\begin{eqnarray}\label{lemma1-3}
	\mb{X} = \sum_{i=1}^{n_2} \sigma_i^2 \mb{p}_i \mb{p}_i^T,
	\end{eqnarray}
	where $\mb{p}_i$ for $i=1,\ldots,n_1$ are columns of $\mb{P}$. 
	Since $\mb{p}_i=\mb{U}^T_2\mb{u}^{(i)}_1$ for $i=1,\ldots,n_1$, 
	each element of $\mb{p}_i$ is  $\mb{p}_i(j)=\langle\mb{u}^{(j)}_2,\mb{u}^{(i)}_1\rangle$.
	Using \eqref{lemma1-2} gives $\mb{p}_i(j)=0$ for $j=n_3+1,\ldots,n_1$ and $i=1,\ldots,n_1$.
	Therefore, \eqref{lemma1-3} gives $\mb{X}(i,j)=0$ for $j=n_3+1,\ldots,n_1$ and $i=n_3 + 1,\ldots,n_1$.
	In summary, each entry of $\mb{Z}$ is
	\begin{equation} \label{lemma1z1}
	\mb{Z}(i,j) = \begin{cases}
	\lambda\mb{X}(i,j) & \textrm{ for } i \leq n_3, j \leq n_3, \\
	0& \textrm{otherwise}.
	\end{cases} 
	\end{equation}
	Similarly, each entry  of $\mb{Z}^{\prime} :=  \mb{\Sigma}_2 \mb{\Sigma}_2^T \mb{X} \in \dsR^{n_1 \times n_1}$ is 
	\begin{equation}\label{lemma1z2}
	\mb{Z}^{\prime}(i,j) = \begin{cases}
	\lambda\mb{X}(i,j) & \textrm{ for } i \leq n_3, j \leq n_3, \\
	0 & \text{otherwise}.
	\end{cases} 
	\end{equation}
	Combing \eqref{lemma1z1}--\eqref{lemma1z2} 
	gives $\mb{Z}=\mb{Z}'$, and thus $\mb{X} \mb{\Sigma}_2 \mb{\Sigma}_2^T = \mb{\Sigma}_2 \mb{\Sigma}_2^T \mb{X}$, which leads to  
	\begin{equation*}
	\mb{U}_2^T \mb{U}_1 \mb{\Sigma}_1 \mb{\Sigma}_1^T \mb{U}_1^T \mb{U}_2 \mb{\Sigma}_2 \mb{\Sigma}_2^T = \mb{\Sigma}_2 \mb{\Sigma}_2^T  \mb{U}_2^T \mb{U}_1 \mb{\Sigma}_1 \mb{\Sigma}_1^T \mb{U}_1^T \mb{U}_2.
	\end{equation*}
	Left multiplying both sides of the above equation by $\mb{U}_2$ and right multiplying them by $\mb{U}_2^T$ give 
	\begin{equation*}
	\mb{X}_1 \mb{X}_1^T \mb{X}_2 \mb{X}_2^T = \mb{X}_2 \mb{X}_2^T \mb{X}_1 \mb{X}_1^T.
	\end{equation*}
\end{proof}

\begin{theorem} \label{Theorem_bound_R}
	Let $\Theta$ be an observation index set, and $[[\mb{A}_1 \ldots, \mb{A}_d]] \in \dsR^{n \times n \times \cdots \times n}$ be a $d$-th order tensor with rank $R$. 
	Suppose that $\mb{A}_{i_{\delta}} = [\mb{A}_i, \mb{\delta a}]$  with $\mb{\delta a} = [1, \ldots, 1]^T \in \dsR^{n}$ is 
	a rank-one update of $\mb{A}_i$ for $i=1,\ldots,d$.
	Let $\mb{B} := \mathcal{B}_{\Theta,k}([[\mb{A}_{1} \ldots, \mb{A}_{d}]])$, 
	$\mb{\delta B} := \mathcal{B}_{\Theta, k}([[\mb{\delta a}, \mb{\delta a}, \ldots, \mb{\delta a}]])$,
	and $\mb{B}_{\delta} = \mathcal{B}_{\Theta, k}([[\mb{A}_{1 \delta}, \ldots, \mb{A}_{d \delta}]])$,
	and let $\mb{u}^{(j)}_{\mb{\delta B}}$ and $\mb{u}^{(l)}_{\mb{B}}$ are the $j$-th and the $l$-th left singular vectors of $\mb{\delta B}$ and $\mb{B}$ 
	respectively for $j,l=1,\ldots,|\Theta|$.
	If the following three conditions hold: 
	\begin{enumerate}
		\item[$1)$] the observation index set $\Theta$ is uniform as defined in Definition \ref{def:uniform},
		
		\item[$2)$] there exist positive constants $\Ctha$ and $\Cthb$ which are independent of $\mb{B}$, such that $\mathrm{cond}^2(\mb{B}) = s^2_{\mathrm{max}}(\mb{B}) / s^2_{\mathrm{min}}(\mb{B}) \leq \Ctha$ and $|\Theta|/(ns^2_{\mathrm{min}}(\mb{B})) \leq \Cthb$, where $s_{\mathrm{max}}$ and $s_{\mathrm{min}}$ are the largest and the smallest singular values of $\mb{B}$ respectively,
		
		\item[$3)$] $\langle \mb{u}^{(j)}_{\mb{\delta B}},  \mb{u}^{(l)}_{\mb{B}} \rangle = 0$, for $j = n+1, \ldots, |\Theta|$  and $l = 1, \ldots, |\Theta|$,
		
	\end{enumerate}
	then the condition number of $\mb{B}_{\delta}$ satisfies that
	\begin{equation}
	\mathrm{cond}^2(\mb{B}_{\delta}) \leq \Ctha + \Cthb.
	\end{equation}   
\end{theorem}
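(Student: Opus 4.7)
The plan is to split $\mb{B}_{\delta}\mb{B}_{\delta}^{T}$ into the contribution of $\mb{B}$ and of the perturbation block $\mb{\delta B}$, show that these two pieces commute via Lemma \ref{lemma}, and read off the condition-number bound from their joint spectrum.

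First, I would apply Proposition \ref{prop_operator} to obtain $\mb{B}_{\delta}=[\mb{B},\,\mb{\delta B}]$, so that
$$
\mb{B}_{\delta}\mb{B}_{\delta}^{T} \;=\; \mb{B}\mb{B}^{T} + \mb{\delta B}\mb{\delta B}^{T}.
$$
Next, I would identify $\mb{\delta B}$ explicitly from Definition \ref{def:B}: with $\mb{\delta a}=[1,\ldots,1]^{T}$, row $s(\Theta,\mb{j})$ of $\mb{\delta B}$ is the standard basis vector $\mb{e}_{j_{k}}$ of $\dsR^{n}$, and assumption $1)$ (uniformity of $\Theta$) yields $\mb{\delta B}^{T}\mb{\delta B}=(|\Theta|/n)\,\mb{I}_{n}$. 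Hence $\mb{\delta B}$ has full column rank, and $\mb{\delta B}\mb{\delta B}^{T}$ has eigenvalue $|\Theta|/n$ with multiplicity $n$ and eigenvalue $0$ with multiplicity $|\Theta|-n$, matching the block form \eqref{lemma1-1} with $\lambda=|\Theta|/n$ and $n_{3}=n$.

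I would then invoke Lemma \ref{lemma} with $\mb{X}_{1}=\mb{B}$ and $\mb{X}_{2}=\mb{\delta B}$: full column rank of $\mb{B}$ follows from $s_{\min}(\mb{B})>0$ (assumption $2)$), full column rank of $\mb{\delta B}$ from the preceding step, \eqref{lemma1-1} from the explicit spectrum just computed, and \eqref{lemma1-2} is exactly assumption $3)$. The lemma yields $\mb{B}\mb{B}^{T}\mb{\delta B}\mb{\delta B}^{T}=\mb{\delta B}\mb{\delta B}^{T}\mb{B}\mb{B}^{T}$; the two symmetric positive semidefinite matrices therefore share a common orthonormal eigenbasis $\{\mb{e}_{i}\}_{i=1}^{|\Theta|}$ with $\mb{B}\mb{B}^{T}\mb{e}_{i}=\mu_{i}\mb{e}_{i}$ and $\mb{\delta B}\mb{\delta B}^{T}\mb{e}_{i}=\nu_{i}\mb{e}_{i}$, where $\nu_{i}\in\{0,\,|\Theta|/n\}$. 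The eigenvalues of $\mb{B}_{\delta}\mb{B}_{\delta}^{T}$ are thus the sums $\mu_{i}+\nu_{i}$.

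Finally, I would bound the extremal nonzero eigenvalues. The upper bound $\lambda_{\max}(\mb{B}_{\delta}\mb{B}_{\delta}^{T})\le s_{\max}^{2}(\mb{B})+|\Theta|/n$ is immediate. For the lower bound on the smallest nonzero eigenvalue I would use assumption $3)$ once more, as in the proof of Lemma \ref{lemma}, to rule out indices $i$ with $\mu_{i}=0$ and $\nu_{i}=|\Theta|/n$ by aligning $\mathrm{range}(\mb{\delta B})$ with $\mathrm{range}(\mb{B})$; the remaining positive sums then satisfy $\mu_{i}+\nu_{i}\ge s_{\min}^{2}(\mb{B})$. Combining with assumption $2)$ gives
$$
\mathrm{cond}^{2}(\mb{B}_{\delta}) \;\le\; \frac{s_{\max}^{2}(\mb{B})+|\Theta|/n}{s_{\min}^{2}(\mb{B})} \;=\; \mathrm{cond}^{2}(\mb{B})+\frac{|\Theta|}{n\,s_{\min}^{2}(\mb{B})} \;\le\; \Ctha+\Cthb.
$$
The step I expect to be the main obstacle is precisely this lower bound on the smallest nonzero eigenvalue of $\mb{B}_{\delta}\mb{B}_{\delta}^{T}$: the commutation alone leaves room for an eigenvalue equal to $|\Theta|/n$, which may drop below $s_{\min}^{2}(\mb{B})$, so the whole bound relies on reading the singular-vector orthogonality in $3)$ carefully enough to exclude the ``pure $\mb{\delta B}$'' eigendirections.
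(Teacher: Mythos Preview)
Your proposal is essentially the same argument as the paper's proof: both use Proposition~\ref{prop_operator} to write $\mb{B}_{\delta}=[\mb{B},\mb{\delta B}]$, both use the uniformity assumption to obtain $\mb{\delta B}^{T}\mb{\delta B}=(|\Theta|/n)\mb{I}_{n}$ and hence the spectral form required by Lemma~\ref{lemma}, both invoke Lemma~\ref{lemma} with condition~3) to conclude that $\mb{B}\mb{B}^{T}$ and $\mb{\delta B}\mb{\delta B}^{T}$ commute and are therefore simultaneously diagonalisable, and both read off the bound $\mathrm{cond}^{2}(\mb{B}_{\delta})\le\mathrm{cond}^{2}(\mb{B})+|\Theta|/(n\,s_{\min}^{2}(\mb{B}))$.

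The only stylistic differences are that the paper obtains the upper bound $s_{\max}^{2}(\mb{B}_{\delta})\le s_{\max}^{2}(\mb{B})+s_{\max}^{2}(\mb{\delta B})$ directly from the min--max principle (rather than via the joint eigenbasis, as you do), and that the paper asserts the lower bound in the stronger form $s_{\min}^{2}(\mb{B}_{\delta})=s_{\min}^{2}(\mb{B})$ by writing $\mb{\Lambda}_{1}$ and $\mb{\Lambda}_{2}$ with their nonzero entries aligned in the leading diagonal positions, whereas you aim only for the inequality and flag this alignment step as the place where assumption~3) must be re-used. Your caution here is well placed: the paper does not spell out why the nonzero block of $\mb{\Lambda}_{2}$ sits inside the nonzero block of $\mb{\Lambda}_{1}$, and your remark that this amounts to showing $\mathrm{range}(\mb{\delta B})\subset\mathrm{range}(\mb{B})$ via condition~3) is exactly the content the paper leaves implicit.
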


\begin{proof}
	By Proposition \ref{prop_operator}, we have $\mb{B}_{\delta} = [\mb{B}, \mb{\delta B}]$.
	Note that the largest singular value of $\mb{B}_{\delta}$ satisfies that $s^2_{\text{max}}(\mb{B}_{\delta}) = \lambda_{\text{max}} (\mb{B}\mb{B}^T + \mb{\delta B} \mb{\delta B}^T )$. Using the min-max theorem, 
	\begin{equation*}
	\begin{aligned}
	s^2_{\text{max}}(\mb{B}_{\delta}) &= \underset{\norm{\mb{x}}{2}=1}{\textrm{max}} \mb{x}^T(\mb{B}\mb{B}^T + \mb{\delta B} \mb{\delta B}^T) \mb{x} \\
	& \leq \underset{\norm{\mb{x}}{2}=1}{\textrm{max}} \mb{x}^T(\mb{B} \mb{B}^T) \mb{x} + \underset{\norm{\mb{x}}{2}=1}{\textrm{max}} \mb{x}^T (\mb{\delta B} \mb{\delta B}^T) \mb{x} \\
	& = s^2_{\text{max}}(\mb{B}) + s^2_{\text{max}}(\mb{\delta B})
	\end{aligned}
	\end{equation*} 
	Next we consider the smallest singular value of $\mb{B}_{\delta}$ under the above conditions. Let $\mb{B} = \mb{U}_1 \mb{\Sigma}_1 \mb{V}_1^T$ and $\mb{\delta B} = \mb{U}_2 \mb{\Sigma}_2 \mb{V}_2^T$ be the singular value decompositions of $\mb{B}$ and $\mb{\delta B}$, respectively. 
	
	By condition $1)$ and noting that $\mb{\delta a} = [1, \ldots, 1]^T$, we have $\mb{\Sigma}_2 \mb{\Sigma}_2^T = \mathrm{diag}(\underbrace {{|\Theta|/n, \ldots ,|\Theta|/n} }_{n}, \underbrace {{0, \cdots ,0} }_{|\Theta|-n}) $.  Since $\mb{\Sigma}_2 \mb{\Sigma}_2^T = \mathrm{diag}(\underbrace {{|\Theta|/n, \ldots , |\Theta|/n} }_{n}, \underbrace {{0, \cdots ,0} }_{|\Theta|-n})$ and $\langle \mb{u}^{(j)}_{\mb{\delta B}},  \mb{u}^{(l)}_{\mb{B}} \rangle = 0, j = n+1, \ldots, |\Theta|, \text{and} \ l = 1, \ldots, |\Theta|$. By Lemma \ref{lemma}, $\mb{B} \mb{B}^T$ and $\mb{\delta B} \mb{\delta B}^T$ commute. Therefore, $\mb{B} \mb{B}^T$ and $\mb{\delta B} \mb{\delta B}^T$ can be simultaneously diagonalizable, i.e., there exists an orthonormal matrix $\mb{Q}$ such that
	\begin{equation*}
	\begin{aligned}
	\mb{B} \mb{B}^T &= \mb{Q} \mb{\Lambda}_1 \mb{Q}^T, \ \mb{\Lambda}_1 = \mathrm{diag}(s^2_{\text{max}}(\mb{B}), \ldots, s^2_{\text{min}}(\mb{B}), \underbrace {{0, \cdots ,0} }_{|\Theta|-nR} ), \\
	\mb{\delta B} \mb{\delta B}^T &= \mb{Q} \mb{\Lambda}_2 \mb{Q}^T, \ \mb{\Lambda}_2 = \mathrm{diag}(s^2_{\text{max}}(\mb{\delta B}), \ldots, s^2_{\text{min}}(\mb{\delta B}), \underbrace {{0, \cdots ,0} }_{|\Theta|-n} ).
	\end{aligned}
	\end{equation*}
	Then it follows that $\mb{B} \mb{B}^T + \mb{\delta B} \mb{\delta B}^T = \mb{Q} (\mb{\Lambda}_1 + \mb{\Lambda}_2) \mb{Q}^T$, and $s^2_{\text{min}}(\mb{B}_{\delta}) = s^2_{\text{min}}(\mb{B})$, which gives that
	\begin{equation*}
	\begin{aligned}
	\mathrm{cond}^2(\mb{B}_{\delta}) &= \frac{s^2_{\text{max}}(\mb{B}_{\delta})}{s^2_{\text{min}}(\mb{B}_{\delta})} \\
	& \leq \frac{s^2_{\text{max}}(\mb{B}) + s^2_{\text{max}}(\mb{\delta B})}{s^2_{\text{min}}(\mb{B})} = \mathrm{cond}^2(\mb{B}) + |\Theta|/(ns^2_{\text{min}}(\mb{B})) \leq \Ctha + \Cthb.
	\end{aligned}
	\end{equation*}
\end{proof}

In summary, in our RATR algorithm (Algorithm \ref{alg_rankadaptive_cpl1}), the
fixed-rank tensor recovery algorithm (Algorithm \ref{alg_ftr}) is invoked.
The stability of  Algorithm \ref{alg_ftr} is dependent on the observation index set $\Theta$
and the initial factor matrices. From our above analysis,  
if the observation index set $\Theta$ is uniform, and the initial rank-one factor matrices
are sampled from the distributions given in Table \ref{table_ratio_mu} with $\mu<1$, 
the first tensor recovery step in RATR (on line 2 of Algorithm \ref{alg_rankadaptive_cpl1}) is stable
with high probability.  
In the rank adaptive procedure, 
our analysis shows that the initial factor matrices specified on line 5 of Algorithm \ref{alg_rankadaptive_cpl1}
can lead to stable tensor recovery on line 7 of Algorithm \ref{alg_rankadaptive_cpl1}, 
if each $\mb{B}$ (see Definition \ref{prop_operator}) associated with the data tensor obtained in the 
previous iteration step is well-conditioned. While the overall tensor recovery problem 
\eqref{cp_missvalue_reg} is solved using the alternative minimization iterative method,
our analysis is restricted to the first iteration step. To analyze the stability for the generalized lasso problem
\eqref{glasso} for arbitrary iterations steps during the alternative minimization procedure remains an open problem.
Nevertheless, our analysis here gives a systematic guidance to initialize the factor matrices for 
Algorithm \ref{alg_rankadaptive_cpl1}
(also for Algorithm \ref{alg_ftr}), 
and our numerical results in section \ref{section_test} show that  our RATR approach is stable and efficient.

\subsection{RATR-collocation algorithm}  \label{section_ratrcollocation}  
Our goal is to efficiently conduct uncertainty propagation from the random input $\mb{\xi} \in I^{d}$ to the  
discrete solution (which is high-dimensional) $\zy=\chi (\zxi) = [u(x^{(1)},\mb{\xi}), \ldots, u(x^{(N_h)},\mb{\xi})]^T\in \zM$ 
of \eqref{spdexi1}--\eqref{spdexi2}. 
The overall procedure of RATR-collocation approach is presented as  the following three steps: 
generating data, processing data to construct RATR-collocation model, and 
conducting predictions using the RATR-collocation model.

For generating data, a tensor style quadrature rule \cite{tensorquad} is first specified 
with $n$ quadrature nodes in each dimension. The full index set is then defined as 
$\Theta_{\rm full}:=\{[j_1, j_2, \ldots, j_d]^T \in \Ne^d \, | \,  j_k=1,\ldots, n  \textrm{ for }  k = 1,\ldots, d\}$
and quadrature nodes are denoted as $\{\zxi_{j_1\ldots j_d}, \textrm{ for }  \mb{j}=[j_1, j_2, \ldots, j_d]^T\in \Theta_{\rm full}\}$. 
A observation index set $\Theta$, and  a validation index set  $\Theta'$ are  randomly selected from 
$\Theta_{\rm full}$, 
such that $ \Theta'  \cap \Theta = \emptyset $  and $|\Theta'|<|\Theta_{\rm }|\ll n^d$. 
After that, snapshots $\chi (\zxi_{j_1\ldots j_d} )$ for 
$\mb{j}=[j_1, j_2, \ldots, j_d]^T\in \Theta\cup \Theta'$
are 
computed through solving deterministic versions of \eqref{spdexi1}--\eqref{spdexi2} with high-fidelity numerical schemes.
At the end of this step, the snapshots are stored in a data matrix $\mb{Y} = [\mb{y}^{(1)}, \mb{y}^{(2)}, \cdots , \mb{y}^{(N_t)}]$, 
where $\zy^{(s(\Theta\cup \Theta', \mb{j}))} := \chi(\zxi_{j_1\ldots j_d})$ for $\mb{j}=[j_1, j_2, \ldots, j_d]^T\in \Theta\cup \Theta'$
and $s(\cdot,\cdot)$ is the sort operator defined in Definition \ref{def:sort}. 

To process the data,  kPCA (see section \ref{sec_kpca}) is first applied to result in a reduced-dimensional representation of 
$\mb{Y}$---each $\zy^{(l)}=\chi (\zxi^{(l)})\in \zM$ is mapped to $\mgamma(\zxi^{(l)})=[\gamma_1(\zxi^{(l)}),\ldots,\gamma_{N_r}(\zxi^{(l)})]^T\in \zM_r$ 
for $l=1,\ldots,N_t$.  After that, for each kPCA mode $e=1,\ldots,N_r$, an estimated data tensor \eqref{eq_data_tensor} 
is generated through our RATR approach presented in section \ref{sec_ratr}. That is, through 
setting the observed data $\Pe_{\Theta}({\zXe}):=\mb{p}$ with $\mb{p}=[p_1,\ldots,p_{|\Theta|}]^T$, where $p_{s(\Theta, \mb{j})}=\gamma_e\left(\zxi^{(s(\Theta, \mb{j}))}\right)$ 
for $\mb{j}\in \Theta$, and the validation data $\Pe_{\Theta'}({\zXe}):=\mb{p}$ with $\mb{p}=[p_1,\ldots,p_{|\Theta'|}]^T$, where $p_{s(\Theta', \mb{j})}=\gamma_e\left(\zxi^{(s(\Theta', \mb{j}))}\right)$ for $\mb{j}\in \Theta'$, Algorithm \ref{alg_rankadaptive_cpl1} gives an approximation of $\zX_e$, which is
denoted by $\tilde{\zX}_e$. With this estimated data tensor, each gPC approximation (see \eqref{eq_gpcfs}) 
$\gamma_{e}(\mb{\xi}) \approx \gamma^{\rm gPC}_{e}:=\sum_{|\mb{i}|=0}^{p}{ c_{e\mb{i}}\Phi_{\mb{i}}(\mb{\xi}) }$ for $e=1,\ldots,N_r$  
is obtained with coefficients computed through $c_{e\mb{i}}:=\langle \tilde{\zX}_e, \zW_{\zi} \rangle$,
 where $\zW_{\zi}$ is defined in \eqref{eq_weight_tensor}. 
In the following, we call these gPC approximations $\{\gamma^{\rm gPC}_{e}(\zxi):=\sum_{|\mb{i}|=0}^{p}{ c_{e\mb{i}}\Phi_{\mb{i}}(\mb{\xi}) }\}^{N_r}_{e=1}$ 
the RATR-collocation model.

The above two steps for generating data and constructing the RATR-collocation model are summarized in Algorithm~\ref{alg_cpl1_uq_offline}. 
For conducting a prediction of the snapshot for an arbitrary realization of $\zxi$, we first use RATR-collocation model 
to compute
the output $[\gamma^{\rm gPC}_{1}(\zxi),\ldots,\gamma^{\rm gPC}_{N_r}(\zxi)]^T$ in the reduced-dimensional manifold $\zM_r$. 
With the reduced output  $[\gamma^{\rm gPC}_{1}(\zxi),\ldots,\gamma^{\rm gPC}_{N_r}(\zxi)]^T$ and the data matrix $\mb{Y}$ (generated in Algorithm~\ref{alg_cpl1_uq_offline}),  an estimation of the snapshot is obtained through the inverse mapping (see section \ref{section_inverse} and \cite{xing2016manifold}),
which is denoted as 
$\zy_{\rm RATR}:=\chi_{\rm RATR} (\zxi) \in \dsR^{N_h}$. 

\begin{algorithm}
	\caption{RATR-colocation in the reduced-dimensional manifold $\mathcal{M}_r$}
	\label{alg_cpl1_uq_offline}
	\begin{algorithmic}[1]
		\Require a full index set $\Theta_{\rm full}$, 
		quadrature nodes $\left\{\zxi_{j_1\ldots j_d}, \textrm{ for }  \mb{j}=[j_1, j_2, \ldots, j_d]^T\in \Theta_{\rm full}\right\}$,
		an observation index set $\Theta$, 
		a validation index set $\Theta^{\prime}$, and a gPC order $p$. 
		\State Generate a data matrix $\mb{Y} = \left[\mb{y}^{(1)}, \mb{y}^{(2)}, \cdots , \mb{y}^{(N_t)}\right]$,
		where $\zy^{\left(s(\Theta \cup \Theta',\mb{j})\right)} := \chi\left(\zxi_{j_1\ldots j_d}\right)$ for $\mb{j}\in \Theta\cup \Theta'$ 
		are obtained through high-fidelity simulations
		for deterministic versions of \eqref{spdexi1}--\eqref{spdexi2} and $s(\cdot, \cdot)$ is defined in Definition \ref{def:sort}.
		\State Perform kPCA for $\mb{Y}$ to obtain 
		$\mgamma\left(\zxi^{(l)}\right)=\left[\gamma_1\left(\zxi^{(l)}\right),\ldots,\gamma_{N_r}\left(\zxi^{(l)}\right)\right]^T$ for $l=1,\ldots,N_t$.
		
		\For {$e = 1:N_r$}
		\State Define $\Pe_{\Theta}({\zXe}):=\mb{p}$ with $\mb{p}=[p_1,\ldots,p_{|\Theta|}]^T$, where $p_{s(\Theta,\mb{j})}=\gamma_e\left(\zxi^{(s(\Theta, \mb{j}))}\right)$ for $\mb{j}\in \Theta$.
		\State Define $\Pe_{\Theta'}({\zXe}):=\mb{p}$ with $\mb{p}=[p_1,\ldots,p_{|\Theta'|}]^T$, where $p_{s(\Theta', \mb{j})}=\gamma_e\left(\zxi^{(s(\Theta', \mb{j}))}\right)$ for $\mb{j}\in \Theta'$.
		\State Generate an estimated data tensor $\zX$ using Algorithm \ref{alg_rankadaptive_cpl1}, and  define ${\tilde{\zX}_e:=\zX}$.
		\State Generate the gPC approximation  
		$\gamma_{e}(\mb{\xi}) \approx \gamma^{\rm gPC}_{e}:=\sum_{\norm{\mb{i}}{1}=0}^{p}{ c_{e\mb{i}}\Phi_{\mb{i}}(\mb{\xi}) }$ with $c_{e\mb{i}}:=\left<\tilde{\zX}_e, \zW_{\zi} \right>$ for $\zi \in \Upsilon:=\{\mb{i} |\, \mb{i} \in \Ne^{d} \textrm{ and } \norm{\mb{i}}{1}=0,\ldots,p\}$, where $\zW_{\zi}$ is defined in \eqref{eq_weight_tensor}.
		\EndFor
		\Ensure  gPC approximations $\gamma^{\rm gPC}_{1}(\mb{\xi}), \ldots,\gamma^{\rm gPC}_{N_r}(\mb{\xi})$ and 
		the data matrix $\mb{Y} = \left[\mb{y}^{(1)}, \mb{y}^{(2)}, \cdots , \mb{y}^{(N_t)}\right]$.
	\end{algorithmic}
\end{algorithm}

\section{Numerical study}\label{section_test}
In this section, 
we first consider diffusion problems in section \ref{sec_numexp_diff1} and 
section \ref{sec_numexp_diff2}, and consider a Stokes problem in section \ref{sec_numexp_stokes}. 
The governing equations of the diffusion problems are
\begin{eqnarray}
-\div \left[a\left(x,\mb{\xi}\right)\nabla u\left(x,\mb{\xi}\right)\right]=1 
& \quad \textrm{in}\quad & D\times I^d, \label{diff1} \\
u\left(x,\mb{\xi}\right)=0                                  
& \quad\textrm{on}\quad & \partial D_D \times I^d,\label{diff2}\\
\frac{\partial u\left(x,\mb{\xi}\right)}{\partial n}=0 
& \quad\textrm{on}\quad & \partial D_N \times I^d,\label{diff3}
\end{eqnarray}
where $\partial u/\partial n$ is the outward normal derivative of $u$ on the boundaries,
$\partial D_D\cap \partial D_N=\emptyset$ and $\partial D=\partial D_D\cup \partial D_N$.
In the following numerical studies, the spatial domain  is taken to be $D=(0,1)\times(0,1)$. 
The condition (\ref{diff2}) is 
applied on the left ($x=0$) and right ($x=1$) boundaries, and (\ref{diff3}) 
is applied on the top and bottom boundaries. 
Defining $H^1(D):=\{u: D \to \dsR, \int_{D} u^2\, {\rm d} D<\infty, \int_{D} (\partial u/ \partial x_l)^2\, {\rm d} D<\infty,  l=1,2\}$
and $H_0^1(D):=\{v\in H^1(D)\,| \,  v=0 \textrm{ on } \partial D_D\}$, 
the weak form of (\ref{diff1})--(\ref{diff3}) is to find
$u(x,\xi)\in H_0^1(D)$ such that
$(a \nabla u, \nabla v) = (1,v)$ for all $v\in H_0^1(D)$. 
We discretize in space using a bilinear finite element approximation \cite{Elman2014}, 
with a uniform $65\times 65$ grid ($N_h=4225$).

The diffusion coefficient $a(x,\mb{\xi})$ in our numerical studies is assumed to be a random 
field with mean function $a_0(x)$, standard deviation $\sigma$ and 
covariance function $Cov(x,y)$,
\begin{eqnarray} \label{eq_cov}
Cov(x,y)=\sigma^2 \exp\left(-\frac{|x_1-y_1|}{\covl}-\frac{|x_2-y_2|}{\covl}
\right),\label{covariance}
\end{eqnarray}
where $x=[x_1,x_2]^T, y=[y_1,y_2]^T\in \dsR^2$ and $\covl$ is the correlation length. 
This random field can be approximated by a truncated Karhunen--Lo\`eve (KL)
expansion \cite{ghanem2003sfem}
\begin{eqnarray}
a(x,\mb{\xi})\approx 
a_0(x)+\sum_{i=1}^d\sqrt{\lambda_i}a_i(x)\xi_i, \label{kl}
\end{eqnarray}
where $a_i(x)$ and $\lambda_i$ are eigenfunctions and eigenvalues 
of (\ref{covariance}), $d$ is the number of KL modes retained, and 
$\{\xi_i\}^d_{i=1}$ are uncorrelated random variables.
We set the random variables $\{\xi_i\}^d_{i=1}$ to be 
independent uniform distributions with range $I=[-1,1]$,
and set $a_0(x) = 1$ and $\sigma^2 = 0.25$.
For test problem 1 (in section \ref{sec_numexp_diff1}), 
we set $l_c = 0.8$ and  $d=48$, such  that at least $95\%$ of the total variance is captured,
i.e., $(\sum_{i=1}^{d}\lambda_i)/(|D|\sigma^2) > 0.95$, where $|D|$ is the area of $D$. 
For test problem 2 (in section \ref{sec_numexp_diff2}), 
we set $l_c=1/16$ and again set $d=48$.

For all test problems, we set the gPC order $p=2$ (see section \ref{section_col}) and 
take $n = 3$ Gaussian quadrature points for each dimension,
while $\Theta_{\rm full}$ is constructed by the tensor product of these three points ($|\Theta_{\rm full}|=3^{48}$). 
As in the input of Algorithm \ref{alg_cpl1_uq_offline}, an observation index set $\Theta$ and a validation index $\Theta'$ are required.
We test three cases of $\Theta$ uniformly sampled from $\Theta_{\rm full}$ with sizes $|\Theta|=100$, $300$ and $600$ respectively,
and generate $\Theta'$ using $20$ samples uniformly sampled from $\Theta_{\rm full}$, such that  $\Theta \cap \Theta^{\prime} = \emptyset$. Note that the number of high-fidelity simulations 
(the finite element methods here) in our RATR  is $|\Theta|+|\Theta^{\prime}|$, while that in standard tensor grid 
collocation \cite{babuvska2007stochastic} 
is $|\Theta_{\rm full}|=3^{48}$ and that in sparse grid collocation \cite{Xiu2005High,maza2009adaptive} 
is still around $4705$  (for a comparable grid level). 
So, the cost of RATR-collocation is much smaller than the costs
of both tensor and sparse grid collocation methods for high-dimensional problems. 

For the diffusion test problems. 
The regularization parameter $\beta$ in (\ref{cp_missvalue_reg}) is set to $0.01$, the tolerance
in Algorithm \ref{alg_ftr}  is set to $\delta = 10^{-5}$, and the initial rank-one matrices for Algorithm~\ref{alg_rankadaptive_cpl1}
are generated with samples of $\mathrm{U}(1,2)$ which is an optimal initializaiton strategy as discussed in section \ref{section_hdoutput_initana}. 
For kPCA as reviewed in section \ref{sec_kpca},  we set the criterion for selecting  principal components to $tol_{\rm PCA}=90\%$,
and set the bandwidth to $\sigma_g=5$ for the diffusion test problems.
For a given realization of $\mb{\xi}$,  $\zy:=\chi(\mb{\xi})$ denotes the finite element solution, and $\zy_{\rm RATR}:=\chi_{\rm RATR}(\mb{\xi})$
refers to a RATR-collocation approximation solution (see section \ref{section_ratrcollocation}). 
A relative error is then defend as  
\begin{equation}\label{diff_error}
\text{Relative error} = \frac{\norm{ \zy - \zy_{\text{RATR}}}{2} }{\norm{\zy}{2} }. 
\end{equation}
\subsection{Test problem 1: diffusion problem with $\covl=0.8$ and $d=48$} \label{sec_numexp_diff1}
For each case of the observation index set $\Theta$ (with $N_t:=|\Theta|=100$, $300$ and $600$ respectively),
we first generate the corresponding data matrix $\mb{Y}$ and apply kPCA for dimension reduction.
For the given tolerance $tol_{\rm PCA}=90\%$,
the number of kPCA modes retained is $N_r=4$ for the three cases here (see section \ref{sec_kpca} for
the definitions of $N_r$ and $tol_{\rm PCA}$). 
For each kPCA mode, our RATR algorithm gives an estimation $\tilde{\zX}_e$ of the data tensor $ \zX_e$ 
for $e=1,\ldots,N_r$ (see line 6 of Algorithm \ref{alg_cpl1_uq_offline}), 
where $\zX_e$ is defined in \eqref{eq_data_tensor}.
Tabel~\ref{table_cprank1} shows the estimated CP ranks 
of $\zX_e$ generated through Algorithm~\ref{alg_rankadaptive_cpl1}.
It is clear that, these estimated ranks of each $\zX_e$ are similar for the three cases of $\Theta$,
and they are very small---the maximum estimated CP rank for this test problem is four.

\begin{table}[!htp]
	\caption{Estimated CP ranks of each data tensor $\zX_e$ for $e=1,\ldots,4$, test problem 1.} 
	\centering	
	\begin{tabular}{c|ccccc}  
		\hline
		\diagbox{$|\Theta|$}{rank}{$e$} 
		& 1 & 2 & 3 & 4 \\ \hline
		100 &  4 & 2 & 3 & 1 \\
		300 &  2 & 1 & 1 & 3 \\
		600 &  4 & 1 & 1 & 2 \\
		\hline
	\end{tabular}
	\label{table_cprank1}
\end{table}

To assess the efficiency of our RATR  procedure,
we compare Algorithm~\ref{alg_rankadaptive_cpl1} with the standard
fixed-rank tensor recovery approach (Algorithm~\ref{alg_ftr}) to recover $\zX_1$ with $|\Theta|=600$ for this test problem.
As discussed above, the initial rank-one factor matrices for RATR are generated thorough the distribution $\mathrm{U}(1,2)$.
For Algorithm~\ref{alg_ftr}, for each given rank $R=1,\ldots,4$, two distributions are tested for generating  
the initial matrices: $\mathrm{U}(1,2)$ and $\mathrm{N}(0,1)$.
Note that, as discussed in section  \ref{section_hdoutput_initana}, $\mathrm{U}(1,2)$ is an optimal choice and $\mathrm{N}(0,1)$ 
is a non-optimal choice for the situation that the CP rank is one. 
In the following, the fixed-rank tensor recovery approach (Algorithm~\ref{alg_ftr}) with initial factor matrices generated through the optimal choice $\mathrm{U}(1,2)$ is denoted by FRTR-O, and
that with initial factor matrices generated through the non-optimal choice $\mathrm{N}(0,1)$ 
is denoted by FRTR-N. 
Figure~\ref{fig_vlidation_error1}(a) shows the validation errors \eqref{eq_cv_cpl2} of 
the recovered tensor generated by RATR, FRTR-O and FRTR-N respectively, where it is clear that 
for each rank $R=1,\ldots,4$, our RATR has the smallest validation error.
As discussed in section \ref{sec_cpl1_reg}, the overall tensor recovery problem \eqref{cp_missvalue_reg} is solved through
the alternative minimization iterative method (see \eqref{cpl1_altform}). 
Looking more closely, the validation errors at each iteration step of the alternative minimization iterative method 
for $R=1,2,4$ are shown in Figure~\ref{fig_vlidation_error1}(b), Figure~\ref{fig_vlidation_error1}(c) and Figure~\ref{fig_vlidation_error1}(d)
respectively (since the results of $R=3$ and $R=4$ similar,  we only show the results of $R=4$). 
For $R=1$ (Figure~\ref{fig_vlidation_error1}(b)), there is no rank adaptive procedure preformed in RATR, and 
the validation errors of RATR and FRTR-O are the same, while it is clear that they are much smaller than 
the errors of FRTR-N.
Moreover, the validation error of FRTR-N can even become larger as the iteration step increases for $R=1$, 
which is consistent with Theorem \ref{Theorem_var_init}. For $R=2,4$ (Figure~\ref{fig_vlidation_error1}(c) and Figure~\ref{fig_vlidation_error1}(d)),
it can be seen that RATR has the smallest validation errors at each iteration step, which shows that our rank-one 
updating procedure (on line 5 of Algorithm \ref{alg_rankadaptive_cpl1}) gives efficient initial factor matrices for the 
generalized lasso problem \eqref{glasso}. 


\begin{figure}[!ht]
	\centering
	\subfloat[][Validation errors w.r.t CP ranks ]{\includegraphics[width=.4\textwidth]{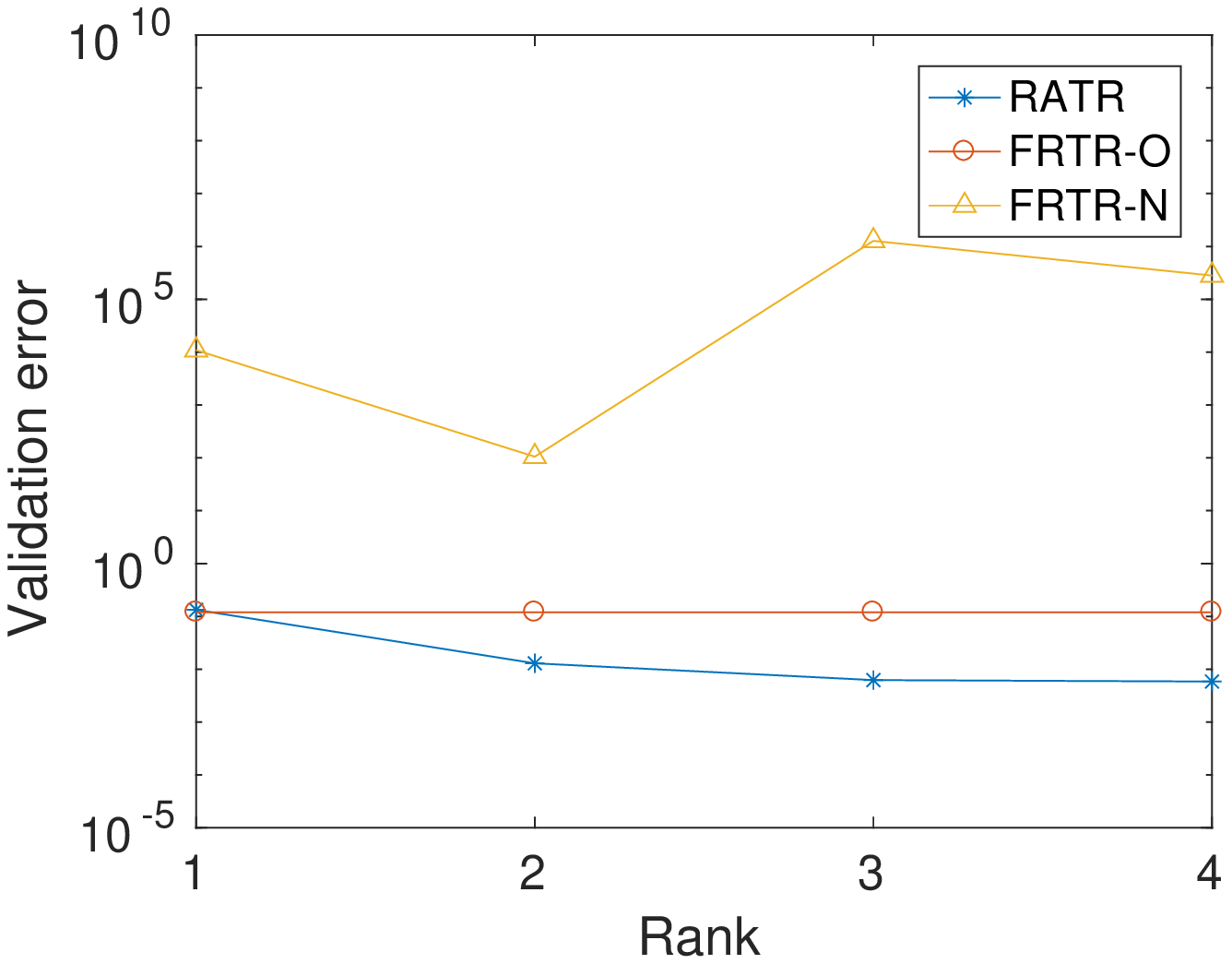}}\quad
	\subfloat[][Validation errors at $R=1$ ]{\includegraphics[width=.4\textwidth]{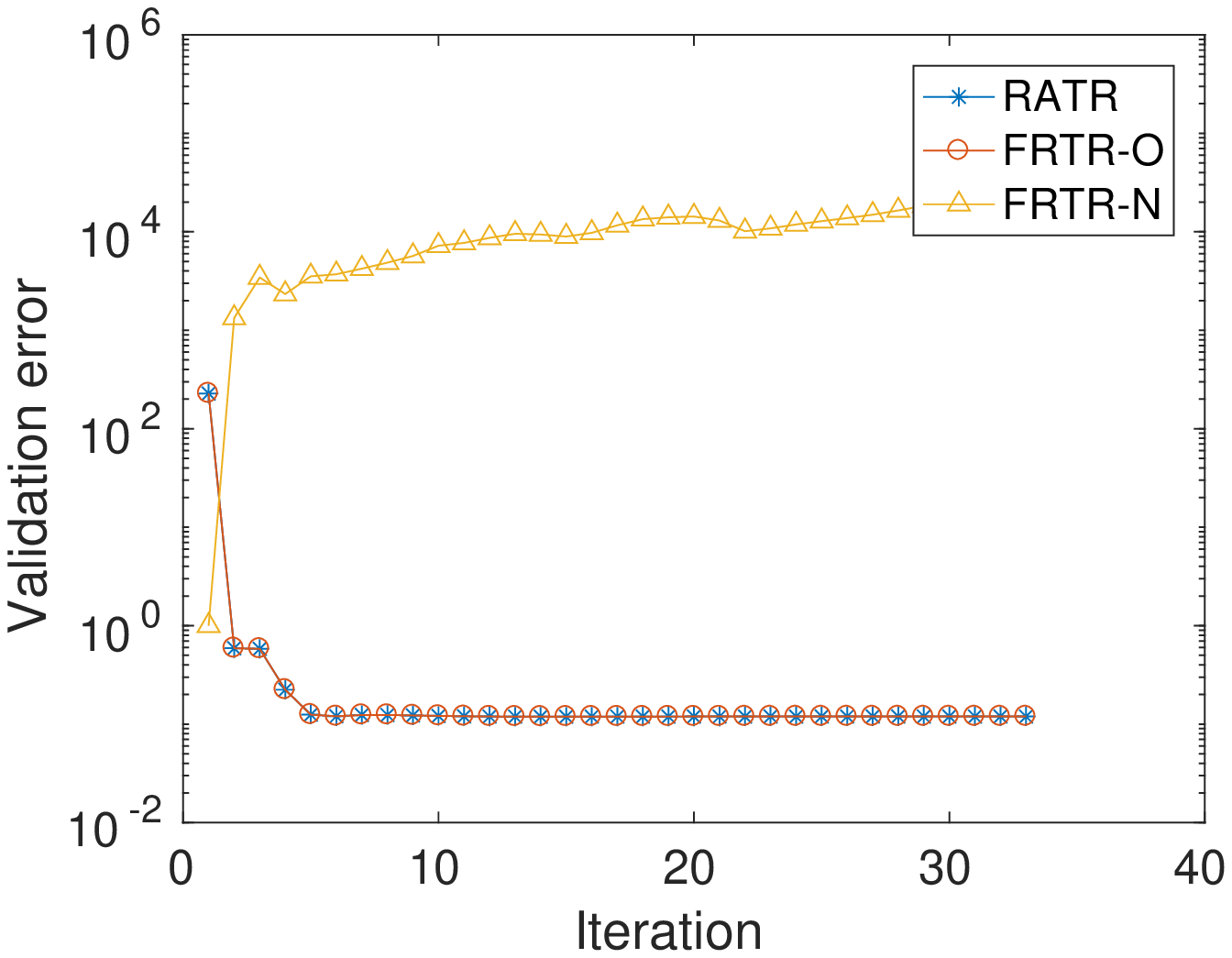}}\\
	\subfloat[][Validation errors at $R=2$ ]{\includegraphics[width=.4\textwidth]{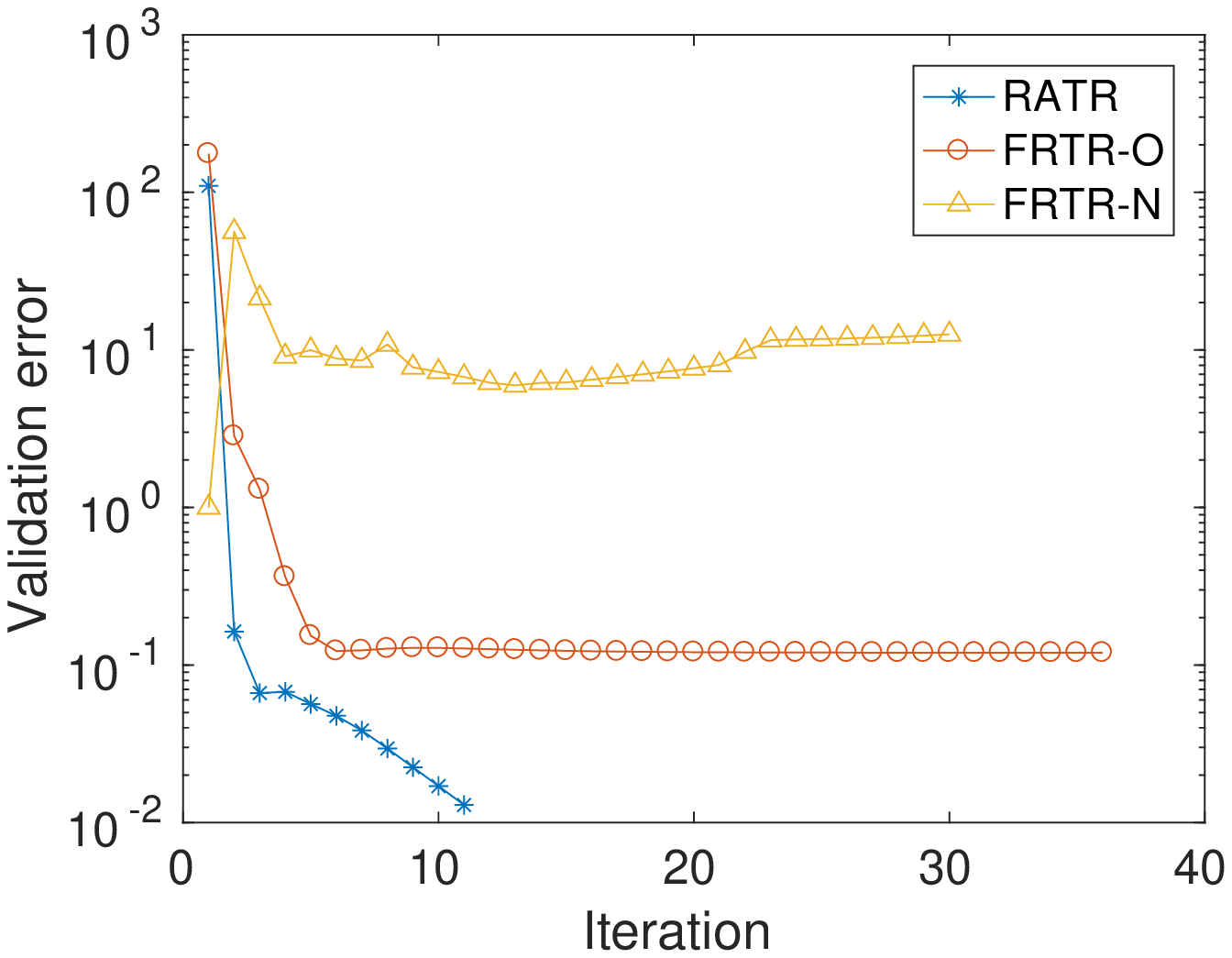}}\quad
	\subfloat[][Validation errors at $R=4$ ]{\includegraphics[width=.4\textwidth]{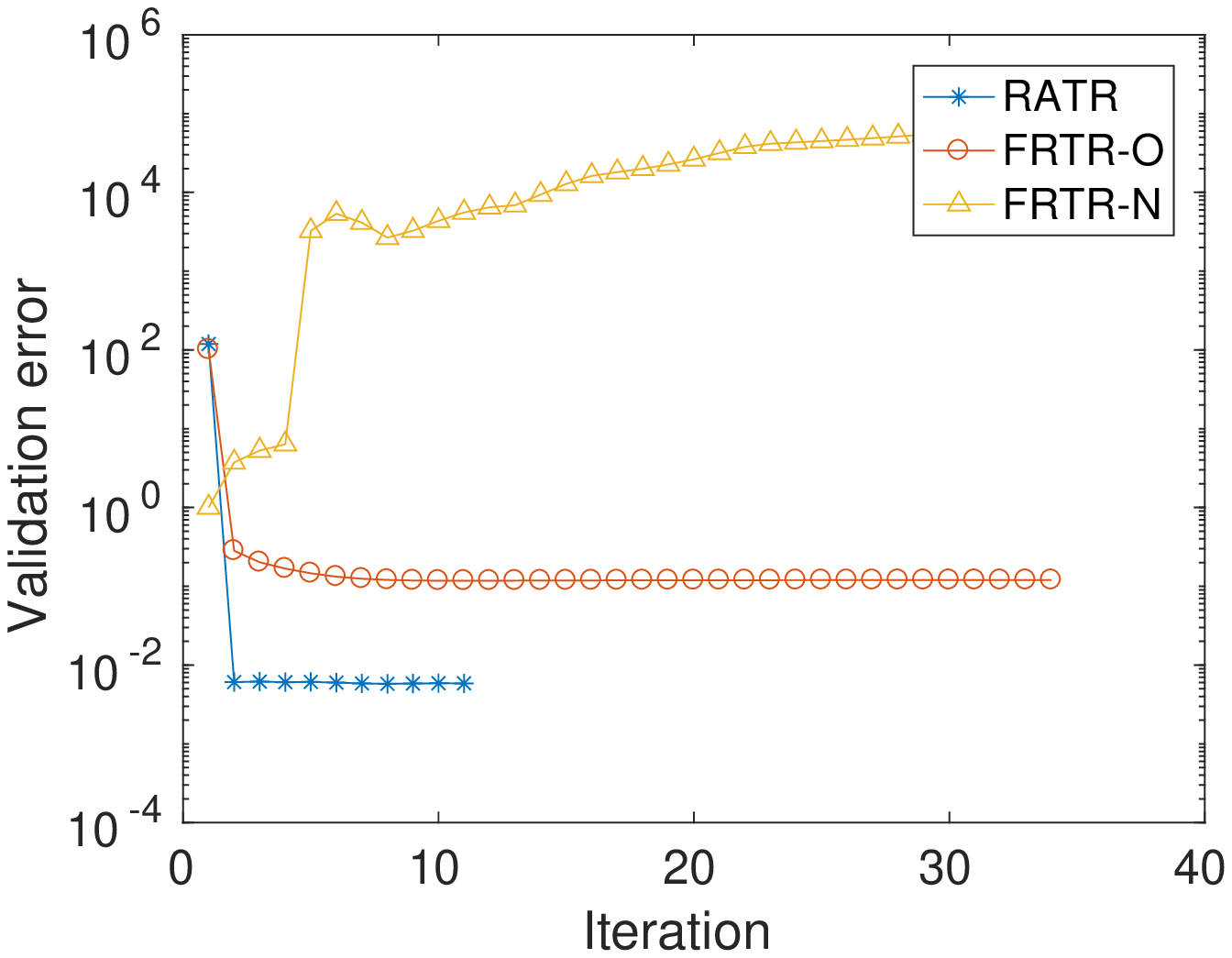}}
	\caption{Validation errors of rank adaptive tensor recovery (RATR),
		fixed-rank tensor recovery with initial factor matrices generated through $\mathrm{U}(1,2)$  (FRTR-O),
		and fixed-rank tensor recovery with initial factor matrices generated through $\mathrm{N}(0,1)$  (FRTR-N), 
		test problem 1. }
	\label{fig_vlidation_error1}
\end{figure}


While the sparsity of the gPC coefficients is taken into account in the tensor recovery problem \eqref{cp_missvalue_reg},
we show the absolute value of each the gPC coefficient $c_{e\mb{i}}$ (see \eqref{eq_coeff_gpc}) for each kPCA mode $e=1,\ldots,4$ and 
each gPC multi-index $\mb{i}\in \Upsilon$ (see section \ref{section_col}) in Figure~\ref{fig_coeff1}.
In Figure~\ref{fig_coeff1}, the gPC multi-index set is labeled as 
$\Upsilon=\{\mb{i}^{(1)},\ldots,\mb{i}^{|\Upsilon|}\}$, where the indices are 
sorted by the sort operator $s(\Upsilon,\cdot)$ (see section \ref{sec_cpl1_reg}, Definition \ref{def:sort}). 
From Figure~\ref{fig_coeff1}, it is clear that the gPC coefficients are sparse---absolute values of 
most coefficients are smaller than $10^{-4}$, which is consist with the results in \cite{doostan2011nonsparse}.

\begin{figure}[!ht]
	\centering
	\subfloat[][1st kPCA mode]{\includegraphics[width=.4\textwidth]{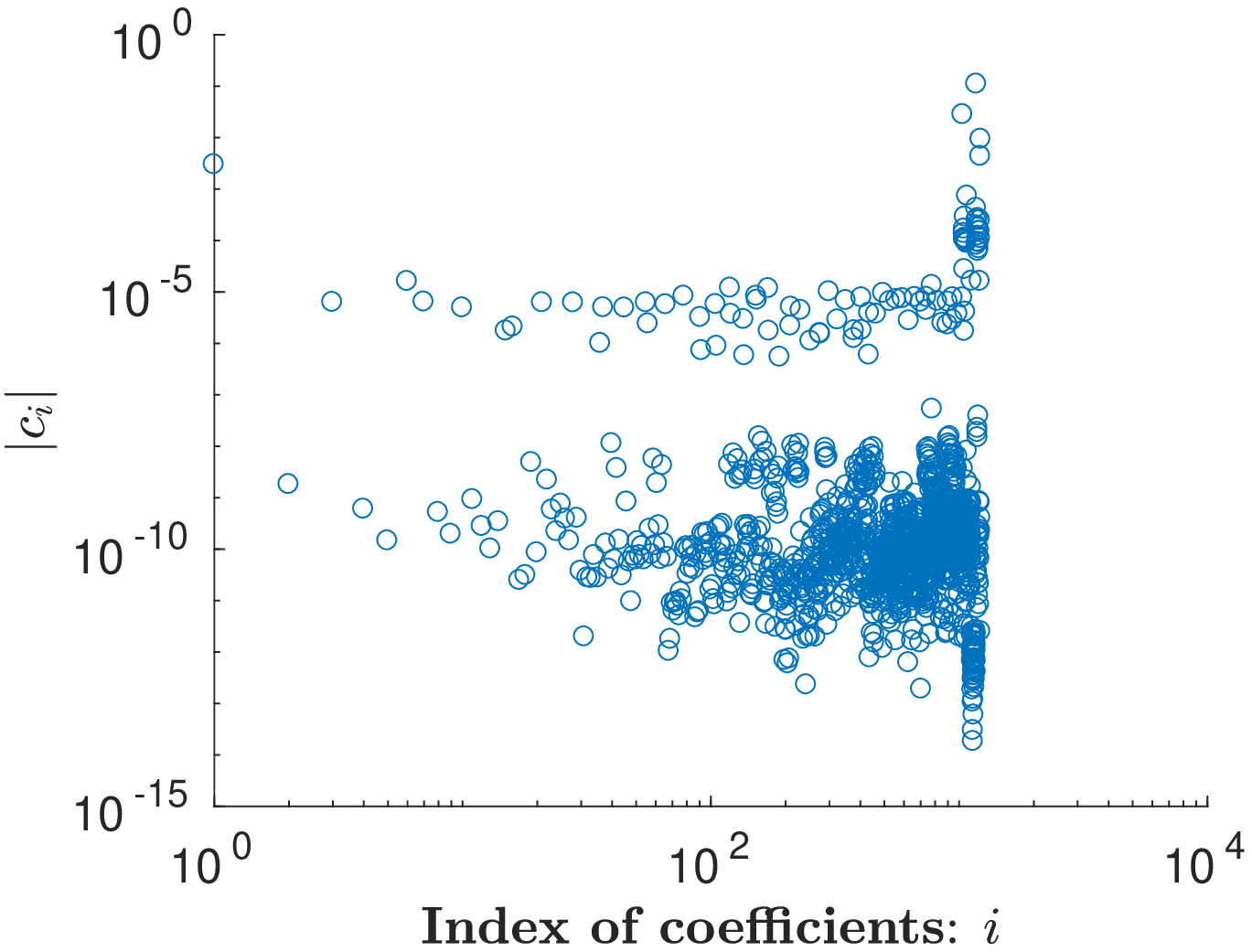}}\quad
	\subfloat[][2nd kPCA mode]{\includegraphics[width=.4\textwidth]{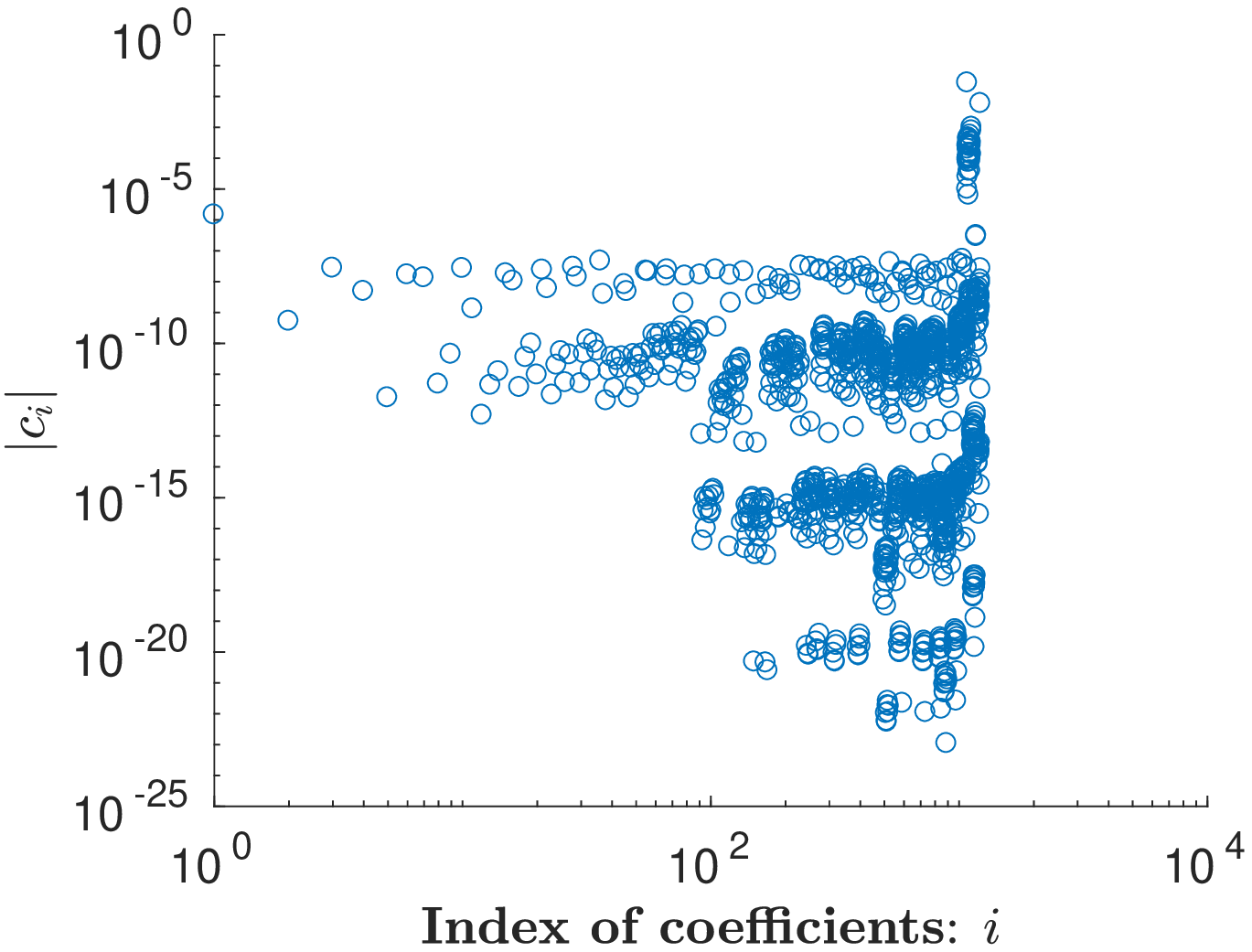}}\\
	\subfloat[][3rd kPCA mode]{\includegraphics[width=.4\textwidth]{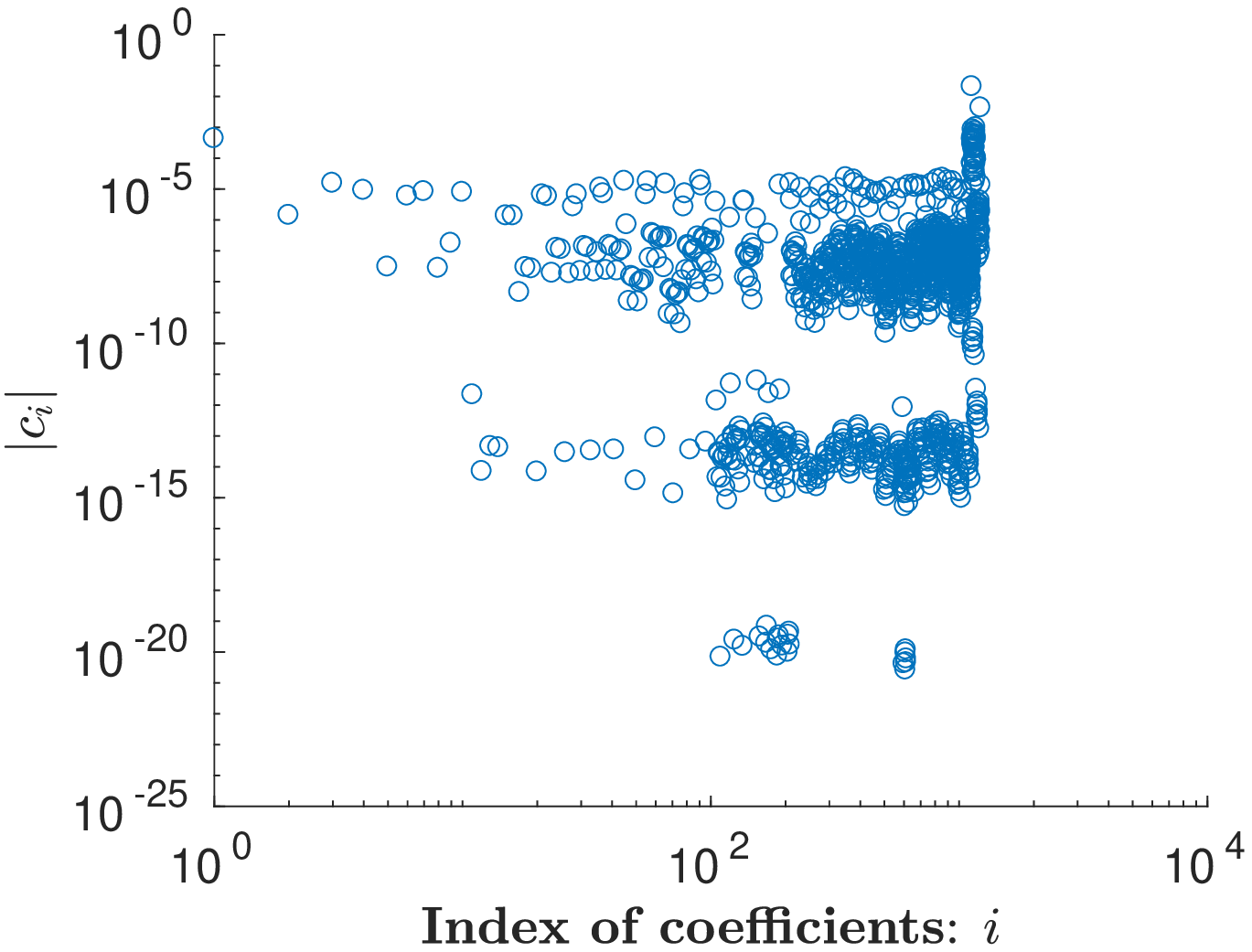}}\quad
	\subfloat[][4th kPCA mode]{\includegraphics[width=.4\textwidth]{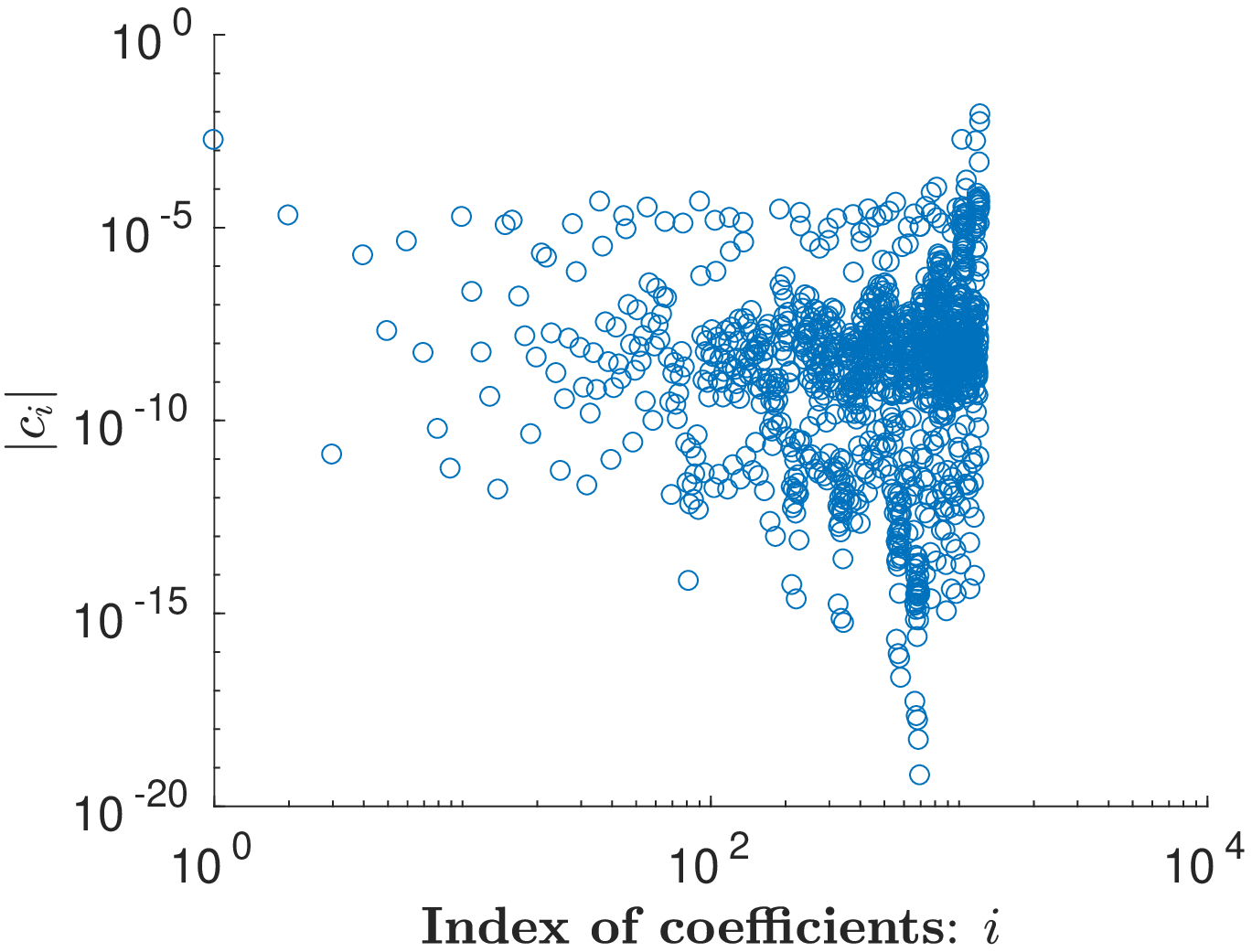}}
	\caption{Sparsity of gPC  coefficients for each kPCA mode, test problem 1.}
	\label{fig_coeff1}
\end{figure}

Figure \ref{fig_sol1} shows the finite element solution $\mb{y}$ and 
the RATR-collocation approximation $\mb{y}_{\text{RATR}}$ 
responding to a given realization of $\mb{\xi}$, where it can be seen that they are visually indistinguishable.   
Finally, we generate $500$ samples of $\mb{\xi}$, and compute the relative error \eqref{diff_error} 
for the three cases ($|\Theta|=100$, $300$ and $600$ respectively).
Figure~\ref{fig_diff_box1} shows Tukey box plots of these errors. 
Here, the central line in each box is the median, the lower and the upper edges are the first and the third
quartiles respectively, and the red crosses are the outliers where the relative errors are large. 
From Figure~\ref{fig_diff_box1}, it is clear that as the size of the observation index set ($|\Theta|$)
increases, values of the median, the first and the third quartiles of the errors decrease.  


\begin{figure}[!htp]
	\centering
	\includegraphics[width=0.48\linewidth, height = 0.36\linewidth]{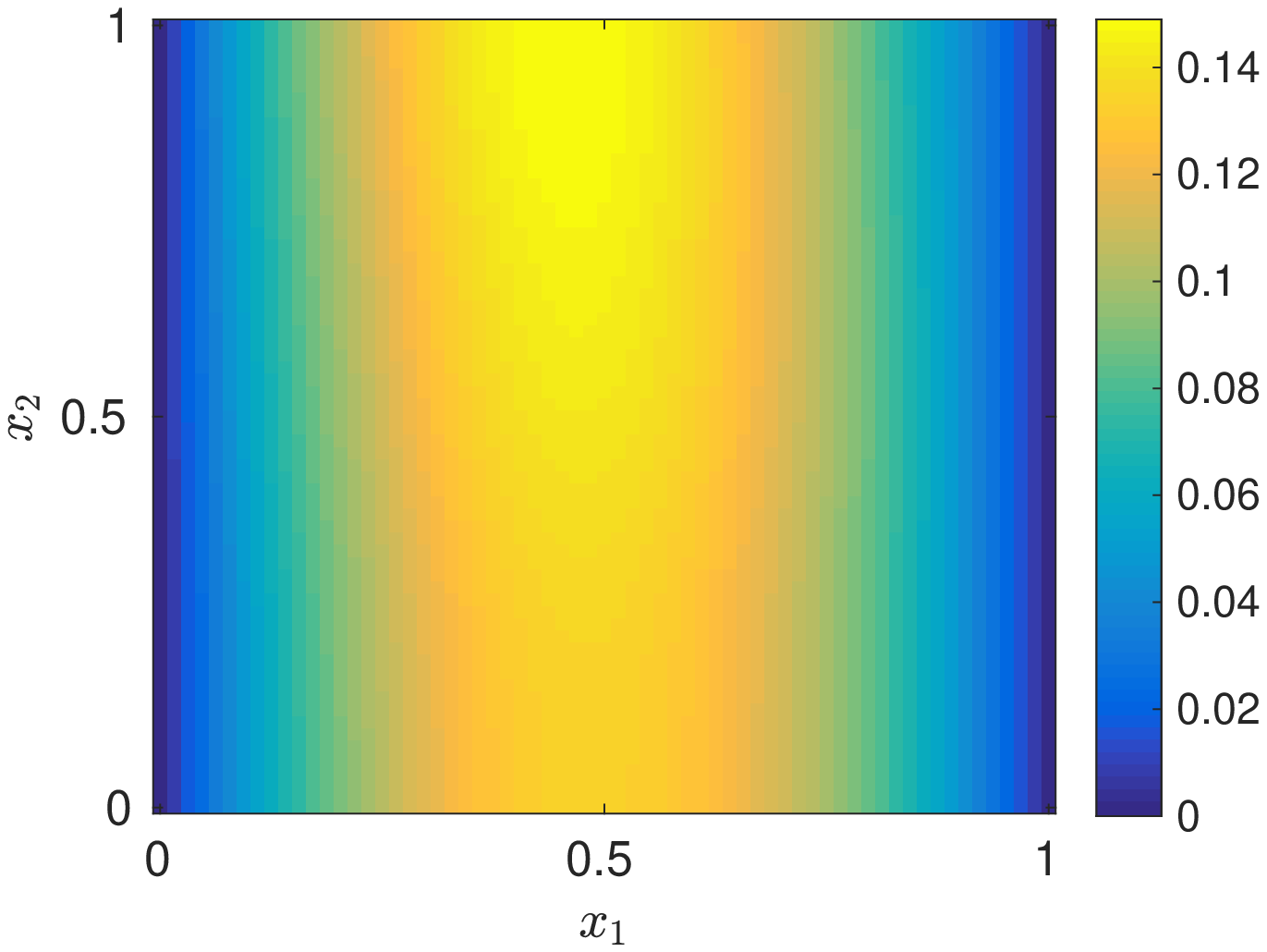}
	\hspace{0.2cm}
	\includegraphics[width=0.48\linewidth,height = 0.36\linewidth]{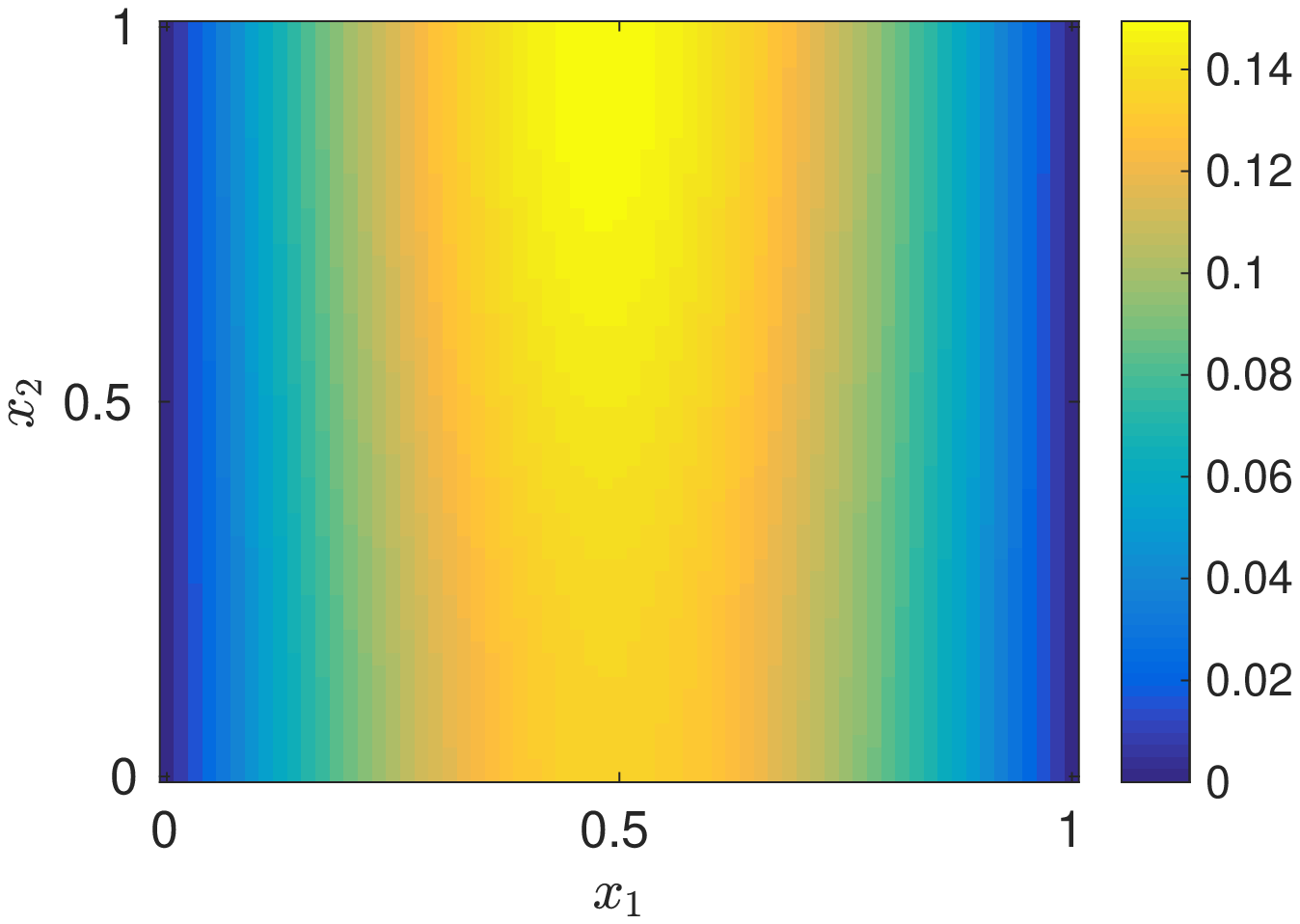}
	\subfloat[\label{fig_diffsol_s} Finite element solution]{\hspace{.5\linewidth}}
	\subfloat[\label{fig_diffsol_t}  RATR-collocation approximation]{\hspace{.5\linewidth}}
	\caption{The finite element solution and the RATR-collocation approximation 
		(with $|\Theta|=600$) responding to a given realization of $\mb{\xi}$,  test problem~1.}
	\label{fig_sol1}
\end{figure}   

\begin{figure}[!htp]
	\centering
	\includegraphics[width=0.58\linewidth, height = 0.38\linewidth]{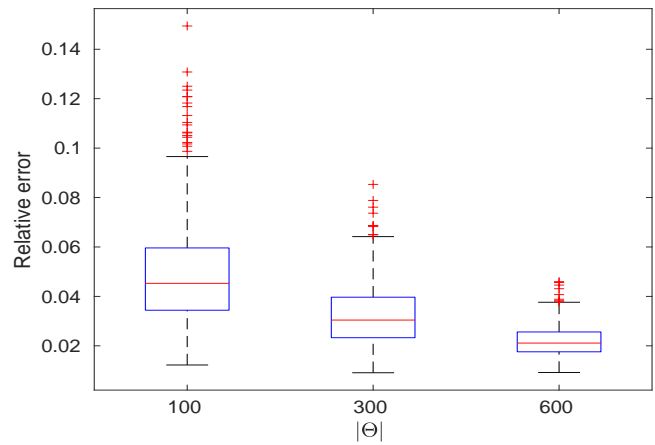}
	\caption{Relative errors of RATR-collocation approximation for $500$ test samples, test problem~1.}
	\label{fig_diff_box1}
\end{figure}

\subsection{Test problem 2: diffusion problem with $\covl=1/16$ and $d=48$} \label{sec_numexp_diff2}
For this test problem, the correlation length is very small, and the diffusion problem becomes highly non-smooth.
Following the discussion procedure in test problem 1, we first generate the corresponding data matrix $\mb{Y}$
for the three cases of $\Theta$ ($|\Theta| = 100, 300$ and $600$) and apply kPCA on it. 
For $tol_{\text{PCA}}=90\%$, the number of kPCA modes retained is $N_r=7$ for this test problem. 
Tabel~\ref{table_cprank_ftr} shows the estimated CP ranks 
of $\zX_e$ generated through Algorithm~\ref{alg_rankadaptive_cpl1} for each $e=1,\ldots,7$,
where it is clear that the estimated ranks are small (the maximum of the estimated ranks is seven).
Figure \ref{fig_errortr_iteration} shows validation errors of our RATR, FRTR-O and FRTR-N (Algorithm~\ref{alg_ftr} with
initial factor matrices generated through $\mathrm{U}(1,2)$ and $\mathrm{N}(0,1)$ respectively)
for recovering $\mb{\mathrm{X}}_1$ (see \eqref{eq_data_tensor}) with $|\Theta| = 600$ .
From Figure \ref{fig_errortr_iteration}(a), it can be seen that our RATR has the smallest validation error for each rank $R=2,\ldots,7$. 
It is also clear that, as the ranks increase, the error of RATR decreases,  while the errors of  FRTR-O and FRTR-N do not  
decrease. The other pictures in  Figure \ref{fig_errortr_iteration} show the validation errors at each iteration step of the 
alternative minimization iterative procedure for $R=1,2,3,4,7$ (since the errors for $R=5,6$ are similar to the errors for $R=7$,
they are not shown here). For the case $R=1$ (Figure \ref{fig_errortr_iteration}(b)), while RATR is the same as FRTR-O, 
 the error of  RATR is larger than the error of FRTR-N, but the errors of RATR and  FRTR-N are both very large (larger than one), 
 which implies that this rank (R=1) is too small to accurately recover the data tensor. 
 As the rank increases,  for $R=2,3,4,7$, the validation errors of RATR are clearly smaller than the errors of FRTR-O 
and FRTR-N, which is consist with the results in test problem 1.
Figure \ref{fig_difftr_cpl1coeff} shows the absolute values of the gPC coefficient 
$c_{e\mb{i}}$ (see \eqref{eq_coeff_gpc}) for $\mb{i}\in \Upsilon$  and $e=1,\ldots,4$ (the first four kPCA modes).
It is clear that absolute values of most gPC coefficients are very small.  Therefore, the gPC expansions 
for these four kPCA modes are sparse. For the other kPCA modes ($e=5,6,7$), 
since the situation is similar to that of the first four kPCA modes, 
their corresponding gPC coefficients are not shown here, while  these gPC expansions are also sparse.
Finally, Figure \ref{fig_difftr_cpl1box} shows Tukey box plots  of the relative errors \eqref{diff_error} for $500$ test samples 
for this test problem, where the central line in each box is the median, the lower and the upper edges are the first and the third
quartiles respectively, and the red crosses are the outliers. 
From Figure \ref{fig_difftr_cpl1box}, it is clear that, as the size of the observation index set ($|\Theta|$)
increases, values of the median, the first and the third quartiles of the errors decrease, which are all consistent  with 
the results in test problem 1.

\begin{table}[!htp]
	\caption{Estimated CP ranks of each data tensor $\zX_e$ for $e=1,\ldots,7$, test problem 2.} 
	\centering	
	\begin{tabular}{c|ccccccc}  
		\hline
		\diagbox{$|\Theta|$}{rank}{$e$} & 1 & 2 & 3 & 4 & 5 & 6 & 7 \\ \hline
		100 & 7 & 1 & 3 & 2 & 1 & 2 & 1\\
		300 & 6 & 3 & 1 & 1 & 7 & 1 & 1\\
		600 & 7 & 1 & 4 & 3 & 7 & 1 & 3\\
		\hline
	\end{tabular}
	\label{table_cprank_ftr}
\end{table}
\begin{figure}[!ht]
	\centering
	\subfloat[][Validation errors w.r.t CP ranks ]{\includegraphics[width=.2670\textwidth]{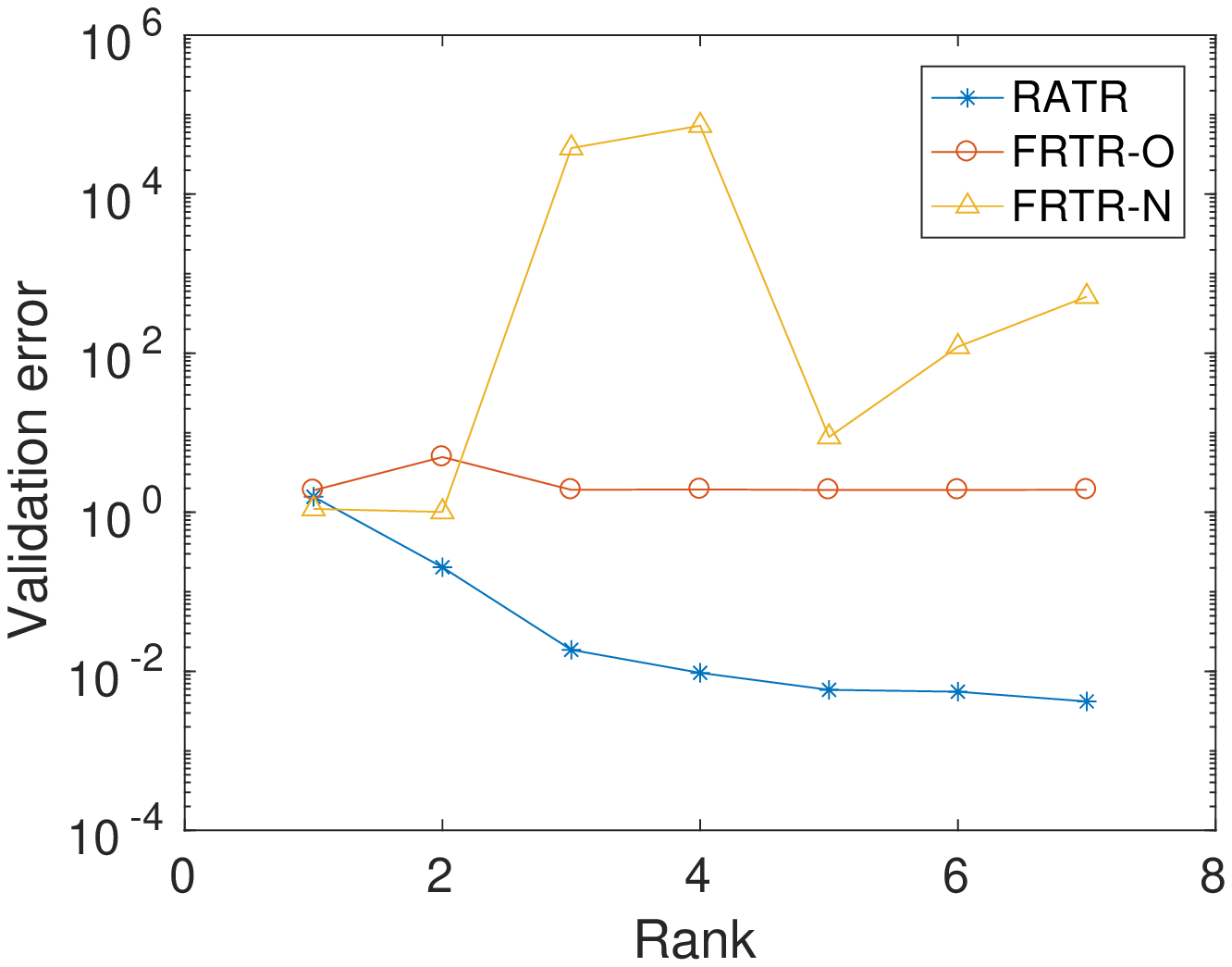}}\quad
	\subfloat[][Validation errors at $R=1$ ]{\includegraphics[width=.2670\textwidth]{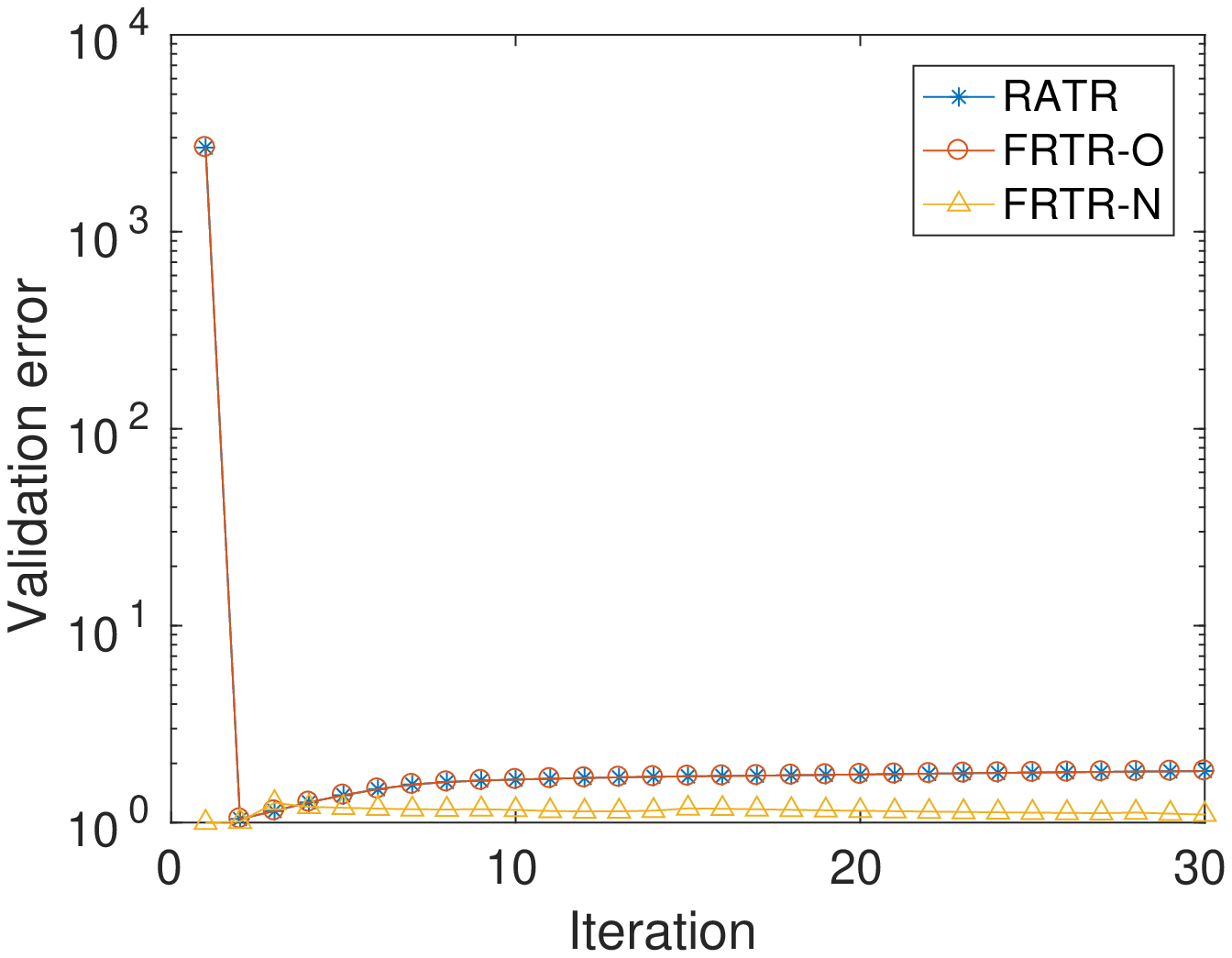}}\quad
	\subfloat[][Validation errors at $R=2$ ]{\includegraphics[width=.2670\textwidth]{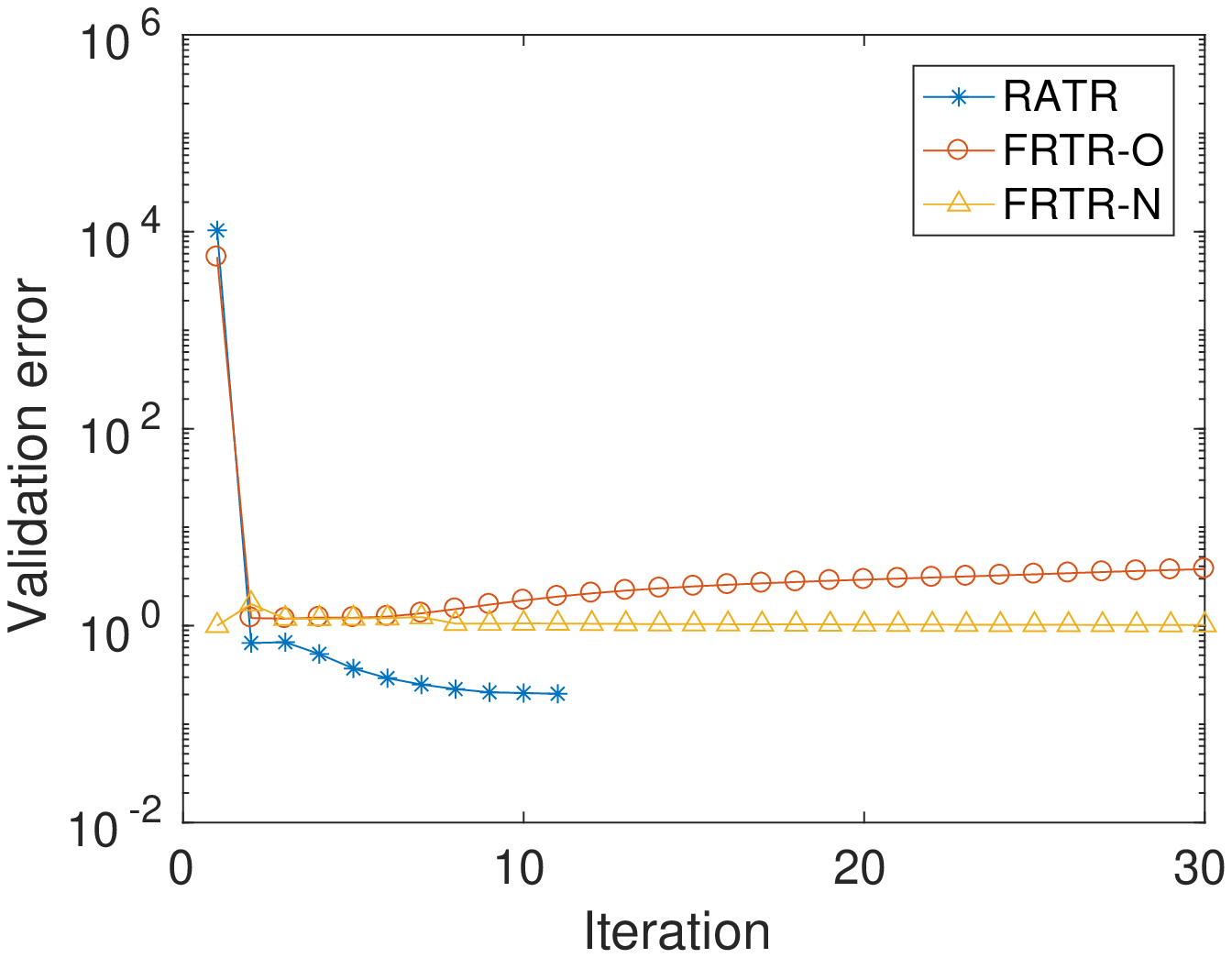}}\\
	\subfloat[][Validation errors at $R=3$ ]{\includegraphics[width=.2670\textwidth]{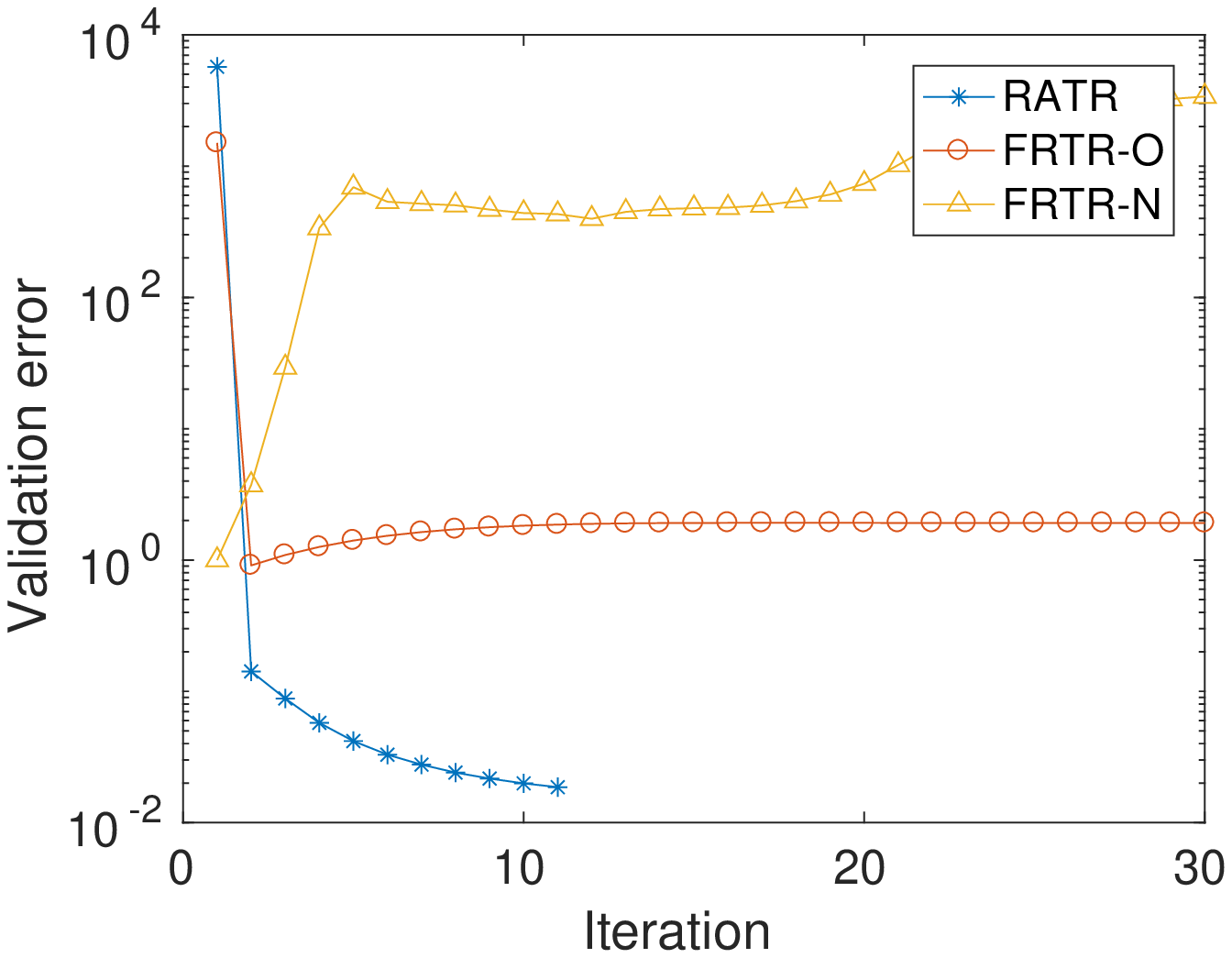}}\quad
	\subfloat[][Validation errors at $R=4$ ]{\includegraphics[width=.2670\textwidth]{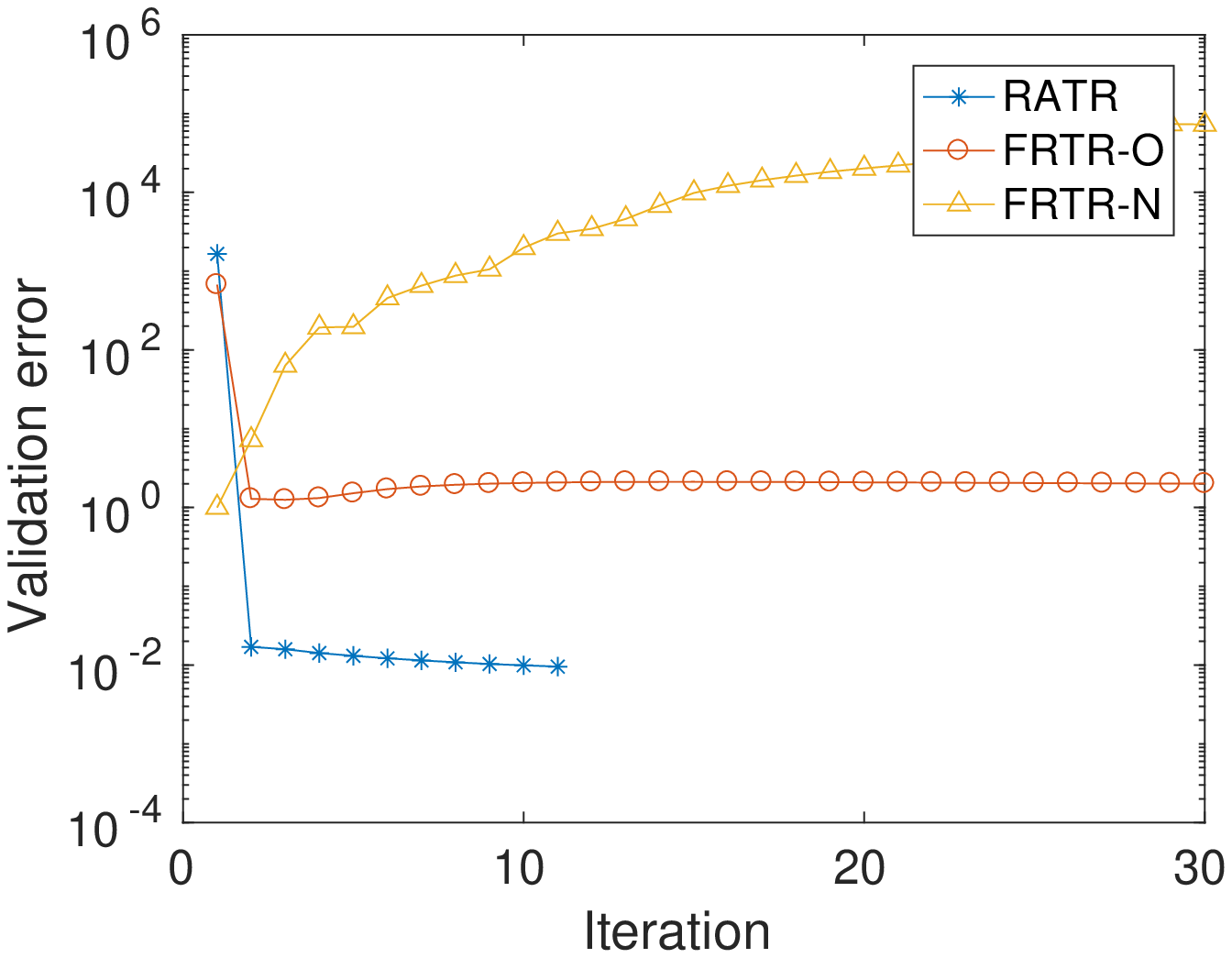}}\quad
	\subfloat[][Validation errors at $R=7$ ]{\includegraphics[width=.2670\textwidth]{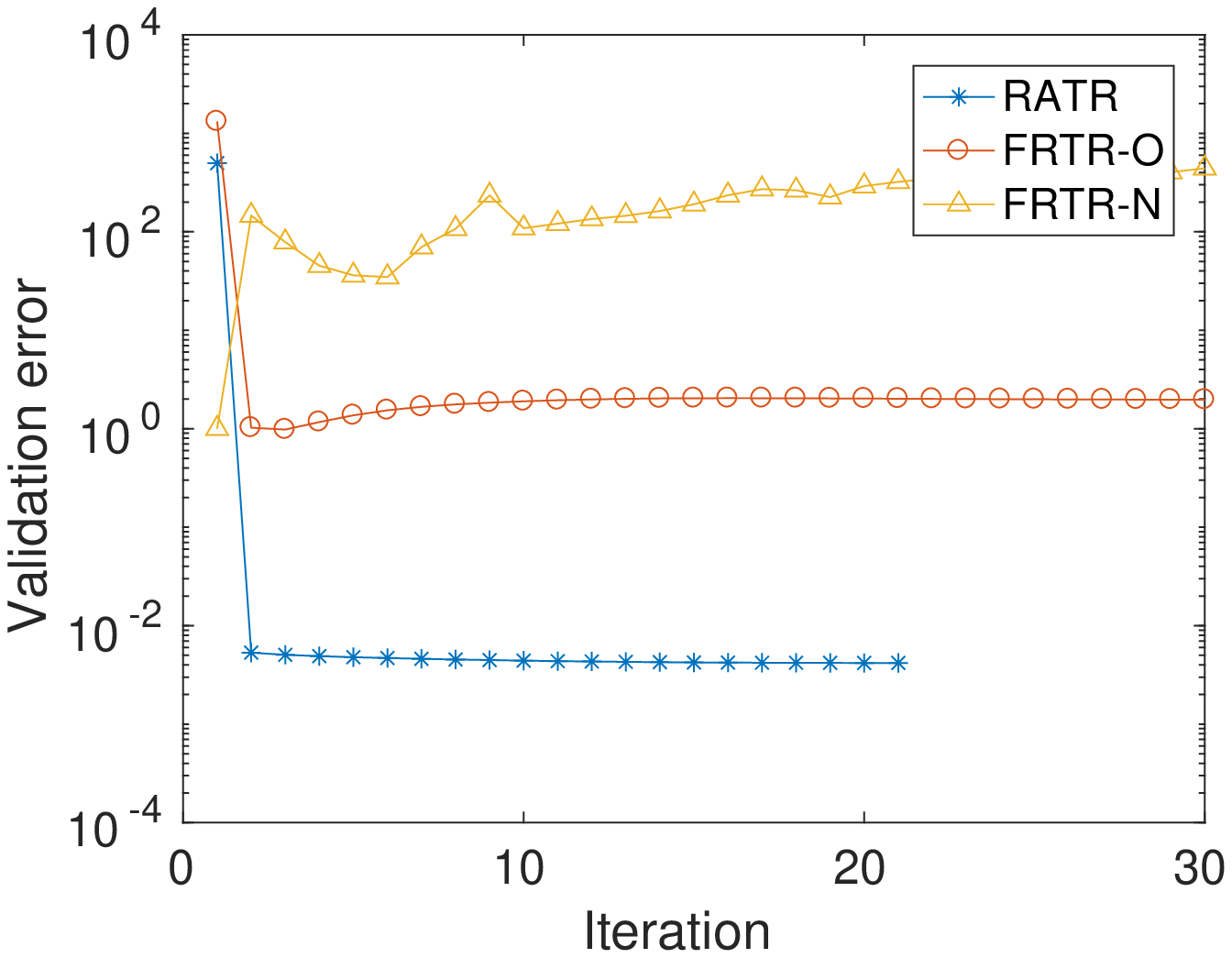}}\\
	\caption{Validation errors of rank adaptive tensor recovery (RATR),
		fixed-rank tensor recovery with initial factor matrices generated through $\mathrm{U}(1,2)$  (FRTR-O),
		and fixed-rank tensor recovery with initial factor matrices generated through $\mathrm{N}(0,1)$  (FRTR-N), 
		test problem 2. }
	\label{fig_errortr_iteration}
\end{figure}
\begin{figure}[!ht]
	\centering
	\subfloat[][1st kPCA mode]{\includegraphics[width=.4\textwidth]{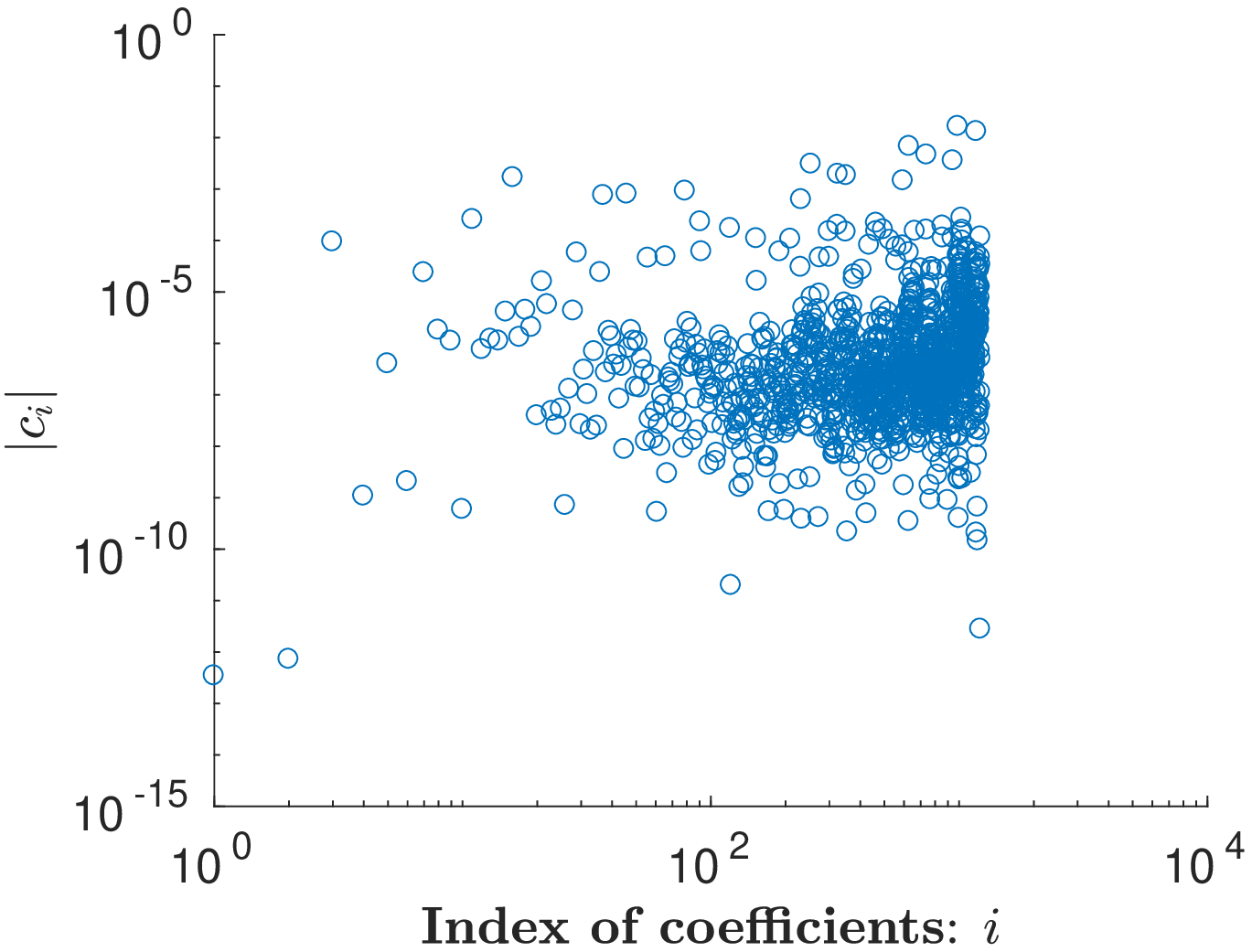}}\quad
	\subfloat[][2nd kPCA mode]{\includegraphics[width=.4\textwidth]{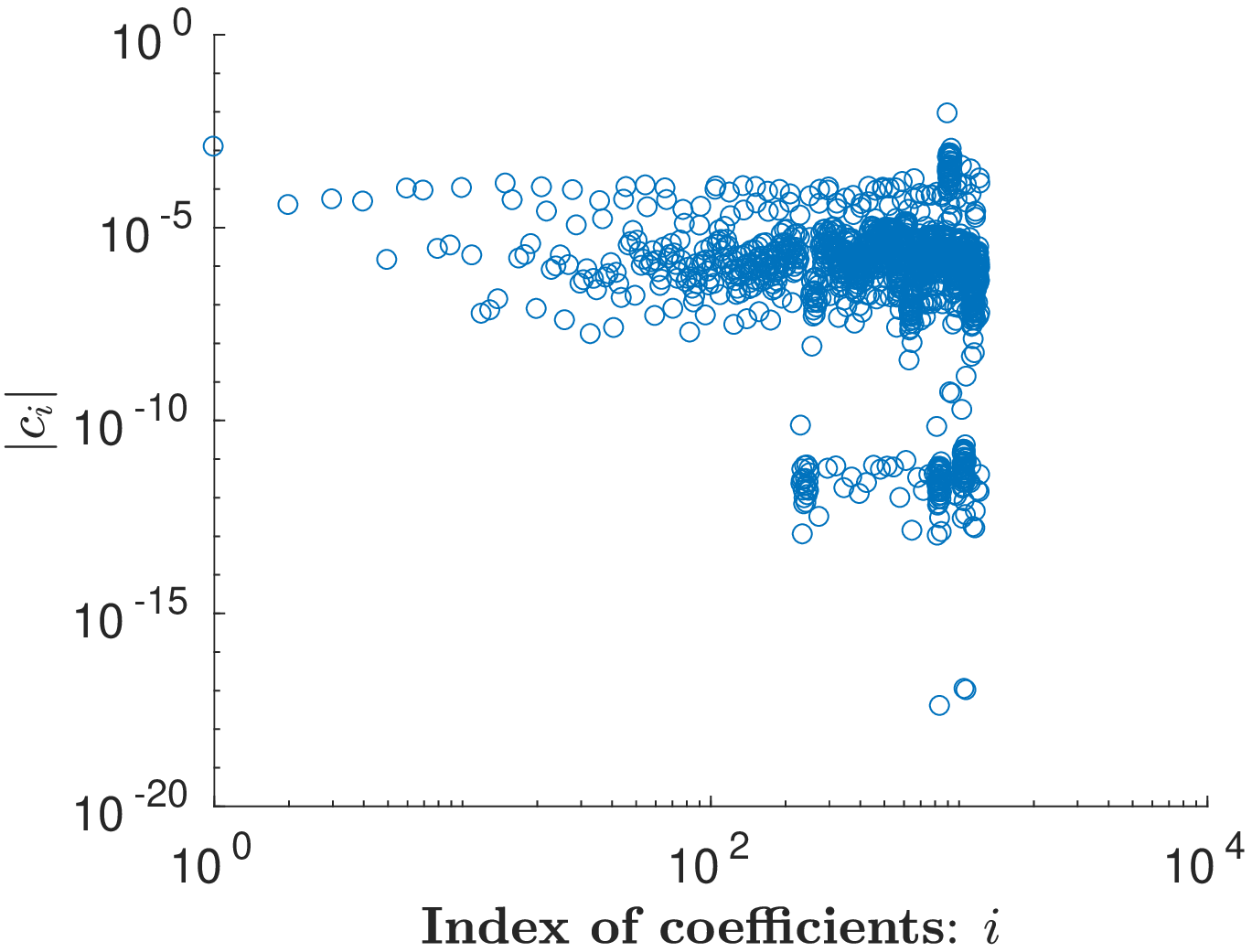}}\\
	\subfloat[][3rd kPCA mode]{\includegraphics[width=.4\textwidth]{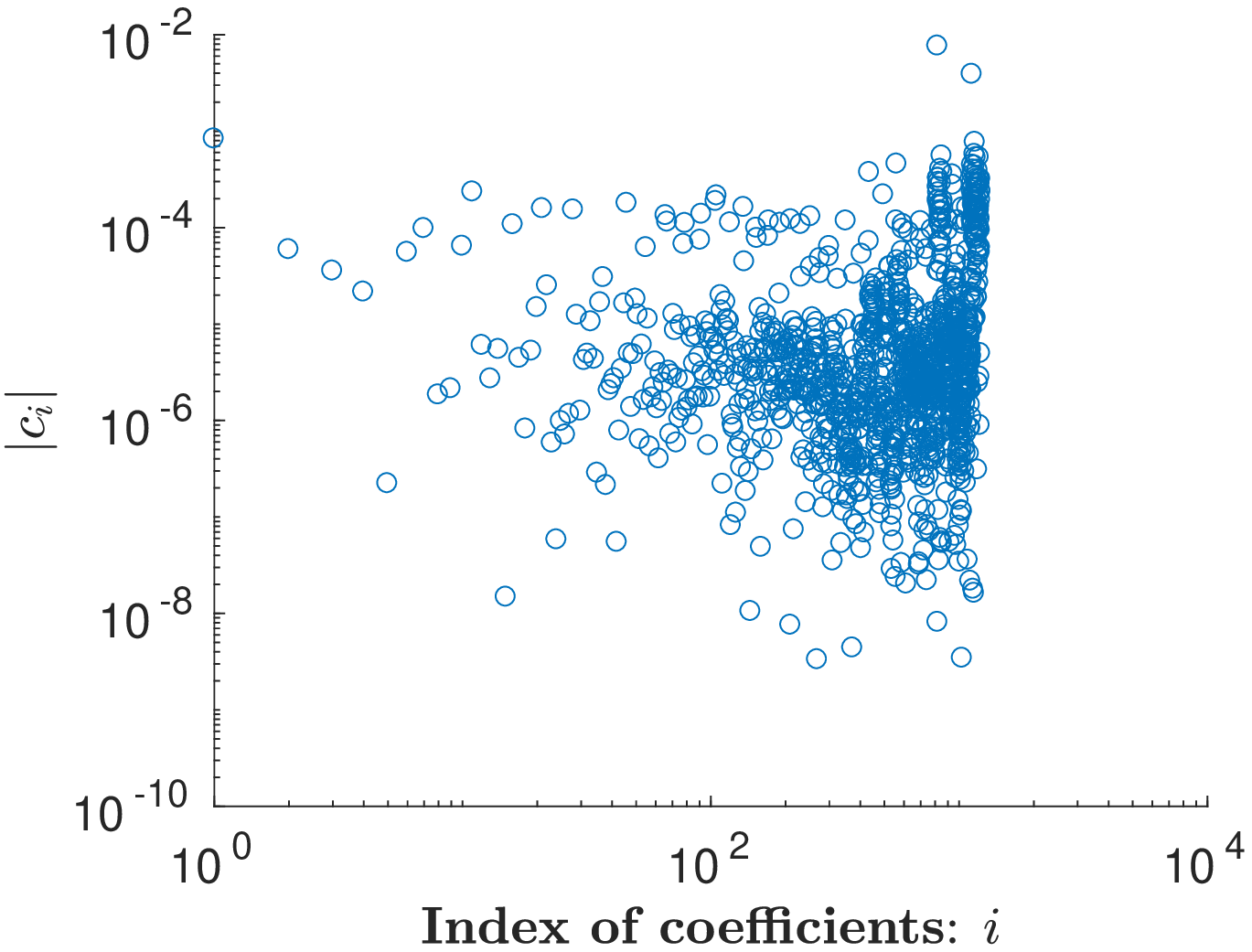}}\quad
	\subfloat[][4th kPCA mode]{\includegraphics[width=.4\textwidth]{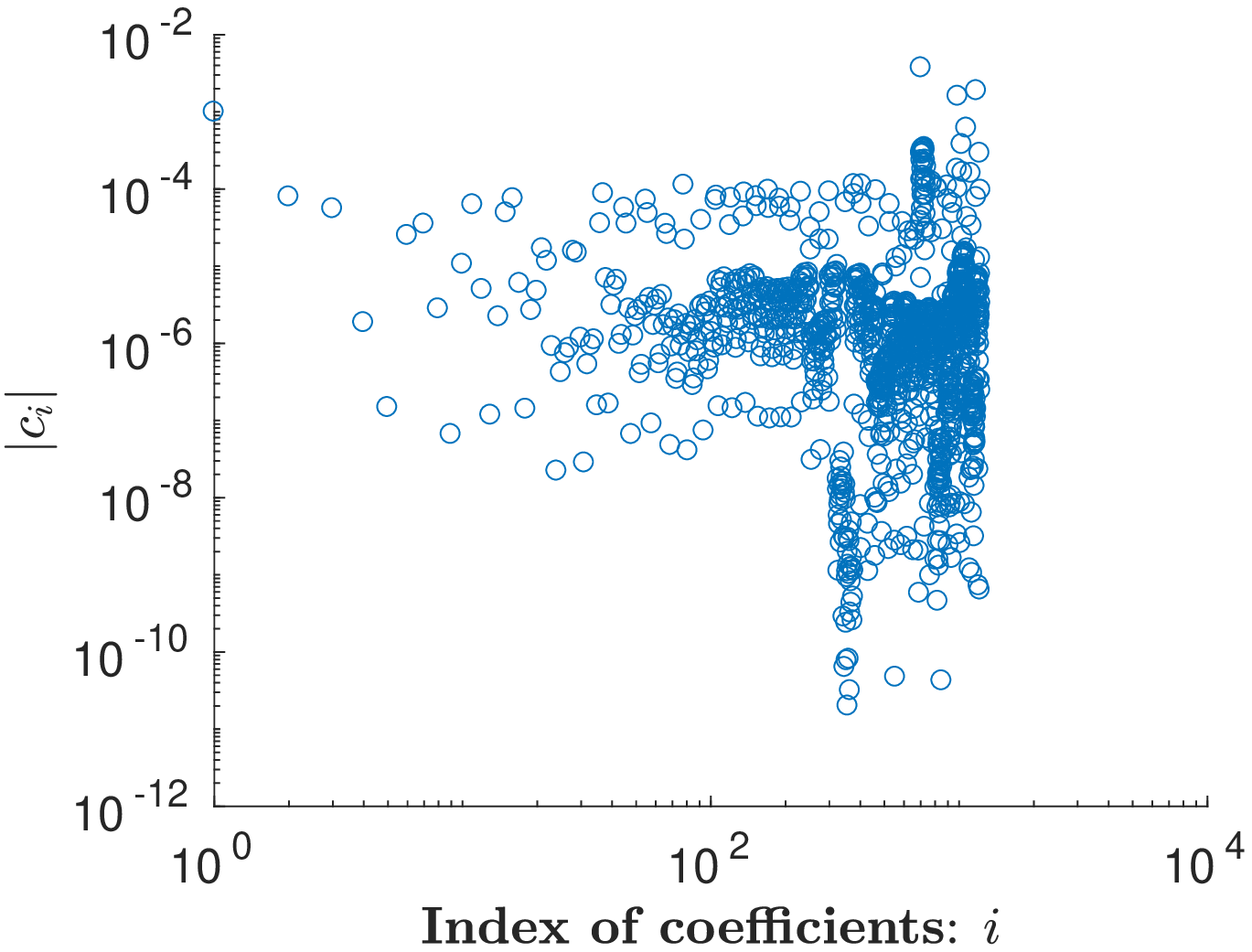}}
	\caption{Sparsity of gPC coefficients for each kPCA mode, test problem 2.}
	\label{fig_difftr_cpl1coeff}
\end{figure}
\begin{figure}[!htp]
	\centering
	\includegraphics[width=0.58\linewidth, height = 0.38\linewidth]{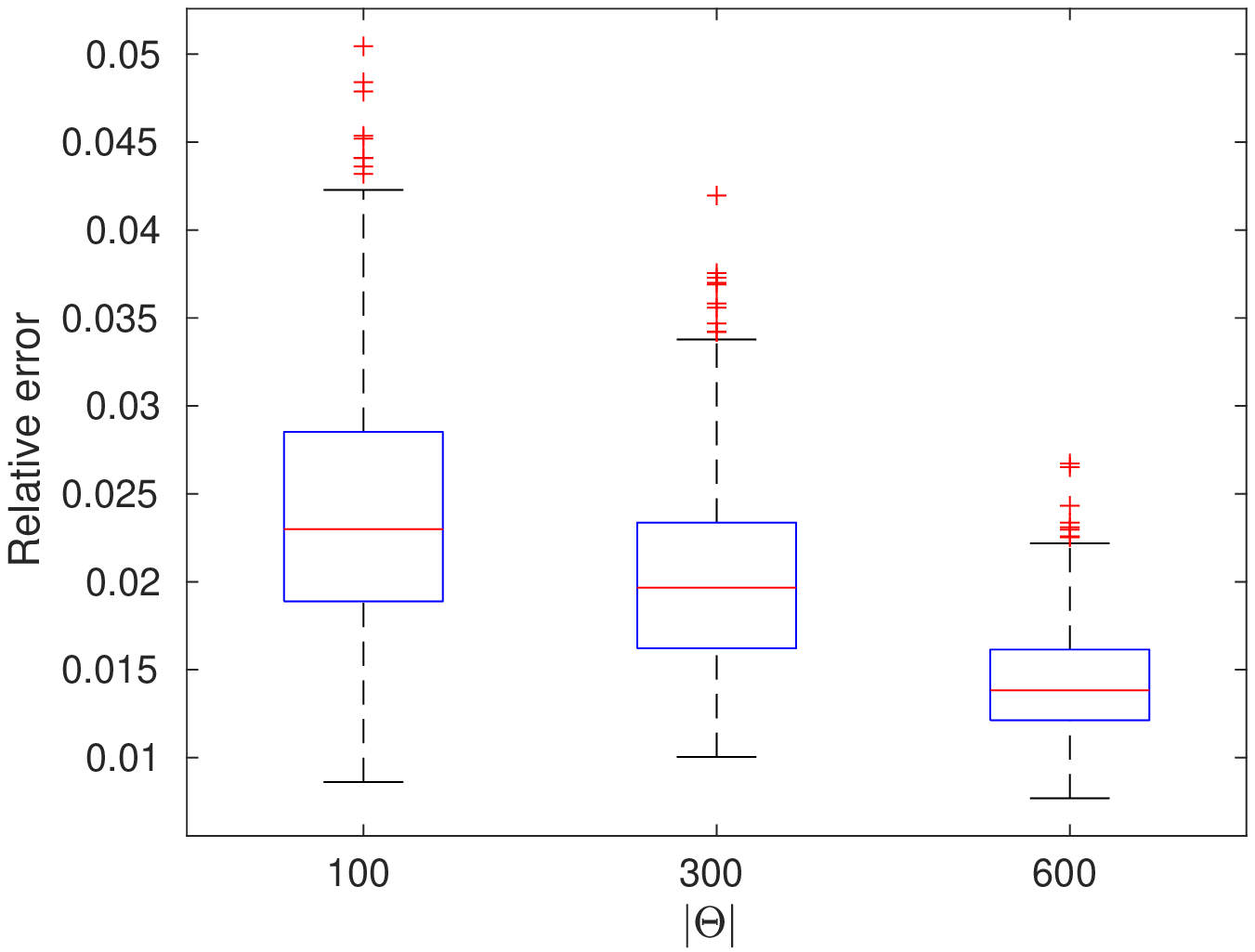}
	\caption{Relative errors of RATR-collocation approximation for 500 test samples, test problem 2.}
	\label{fig_difftr_cpl1box}
\end{figure}

\subsection{Test problem 3: the Stokes equations} \label{sec_numexp_stokes}
The governing equations for this test problem are
\begin{align} 
\nabla \cdot [a(x, \mb{\xi}) \nabla u(x, \mb{\xi})] + \nabla p(x, \mb{\xi}) &= 0 \quad \mathrm{in} \quad D  \times I^d, \label{eq_stokes1}\\
\nabla \cdot u(x, \mb{\xi}) &= 0 \quad \mathrm{in} \quad D \times I^d, \label{eq_stokes2}\\ 
u(x, \mb{\xi}) &= g(x) \quad \mathrm{on} \quad \partial D \times I^d, \label{eq_stokes3}
\end{align}
where $D\subset \dsR^2$, and $u(x, \mb{\xi}) = [u_1(x, \mb{\xi}), u_2(x, \mb{\xi})]^T$ and $p(x, \mb{\xi})$ are the flow velocity and 
the scalar pressure respectively. 
We consider the problem with  uncertain viscosity $a(x, \mb{\xi})$, which is assumed to be a random field with 
mean function $a_0(x) = 1$, variance
$\sigma^2 = 0.25$, and covariance function (\ref{eq_cov}). The correlation length is set to $l_c=0.8$, and we take $d = 48$ 
to capture $95\%$ of the total variance as in test problem 1. We here consider the driven cavity flow problem 
posed on $D = (0,1) \times (0,1)$.  For boundary conditions, the velocity profile $u = [1, 0]^T$ is imposed on the top boundary 
($x_2 = 1$ where $x = [x_1, x_2]^T$), and $u = [0, 0]^T$ is imposed on all other boundaries. 
We discretize in space using the inf-sup stable $\mb{Q}_2-\mb{P}_{-1}$ mixed finite element method (biquadratic velocity--linear 
discontinuous pressure \cite{Elman2014}) as implemented in IFISS \cite{ers14} with a uniform $33 \times 33$ grid,  
which yields the velocity degrees of freedom $N_{h,\,u} = 2178$ and  the pressure degrees of freedom $N_{h,\,p} = 768$. 
The output $\mb{y}$ here is defined to be a vector collecting both discrete velocity and pressure solutions,
and the overall dimension of $\mb{y}$ is then $N_h = N_{h,\,u} + N_{h,\,p}=2946$. 
For this test problem, the regularization parameter $\beta$ in (\ref{cp_missvalue_reg}) is set to 0.1, and the tolerance in 
Algorithm \ref{alg_ftr} is set to $\delta = 10^{-5}$. The bandwidth $\sigma_g$ of kPCA for dimension reduction is set to $10$, 
and we again set $tol_{\text{PCA}}$ to $90\%$. 

We first generate the corresponding data matrix $\mb{Y}$ for the three cases of $\Theta$ ($|\Theta| = 100, 300$ and $600$) 
and apply kPCA on it. For $tol_{\text{PCA}}=90\%$, our results show that the number of kPCA modes retained is $N_r=9$ 
for the case $|\Theta| = 100$, while $N_r=10$ for the cases $|\Theta| = 300$ and  $|\Theta| = 600$, 
which implies that the sample size $100$ may not be large enough for an accurate dimension reduction. 
Tabel~\ref{table_cprank_f_stokes} shows the estimated CP ranks 
of $\zX_e$ generated through Algorithm~\ref{alg_rankadaptive_cpl1} for each kPCA mode $e=1,\ldots,N_r$. 
Again, it is clear that, the estimated ranks are small---the maximum estimated rank is only seven. This 
shows that the rank-one update produced in Algorithm~\ref{alg_rankadaptive_cpl1} is performed seven times at most,
and it is therefore not costly. 

Figure \ref{fig_sterror_iteration} shows the validation errors of our RATR (Algorithm~\ref{alg_rankadaptive_cpl1} with 
initial factor matrices generated through $\mathrm{U}(1,2)$), FRTR-O and FRTR-N (Algorithm~\ref{alg_ftr} with
initial factor matrices generated through $\mathrm{U}(1,2)$ and $\mathrm{N}(0,1)$ respectively)
for recovering $\mb{\mathrm{X}}_1$ (see \eqref{eq_data_tensor}) with $|\Theta| = 600$.
From Figure \ref{fig_sterror_iteration}(a), it can be seen that our RATR has the smallest validation error for each rank.
It is also clear that, as the rank increases from one to three, the errors of RATR reduces significantly,
while the iterations from rank three to seven are caused by our stopping criterion on line 9 of  Algorithm \ref{alg_rankadaptive_cpl1}.
The other pictures in  Figure \ref{fig_sterror_iteration} show the validation errors at each iteration step of the 
alternative minimization iterative procedure for $R=1,2,3,6,7$ (while the errors for $R=4,5$ are similar to the errors for $R=6$,
they are not shown here). Similarly to test problem 1, for the case $R=1$, 
RATR is the same as FRTR-O, and their errors  are smaller than the error of FRTR-N. 
For $R=2,3,6,7$, the validation error of RATR is again clearly smaller than the errors of FRTR-O 
and FRTR-N, which shows that our rank-one update procedure is efficient for this Stokes problem. 

Figure \ref{fig_stokes_cpl1coeff1} shows the absolute values of the gPC coefficients of  
the first four kPCA modes for this test problem. It is clear that absolute values of most gPC coefficients are small,
and  the gPC expansions for these four kPCA modes are therefore sparse. For the other kPCA modes ($e=5,6,7$), 
while the situation is similar (the corresponding gPC expansions are also clearly sparse), they are not shown here. 
Figure \ref{fig_stokes_cpl1_sol} shows the flow streamlines and the pressure fields
generated by the mixed finite element method and RATR-collocation (see section \ref{section_ratrcollocation}) 
responding to a given realization of $\mb{\xi}$. 
It can be seen that there is no visual difference between the results obtained through finite elements and RATR-collocation. 
Finally, we generate $500$ samples of $\mb{\xi}$ and the compute the relative errors \eqref{diff_error}. 
Figure \ref{fig_difftr_cpl1box} shows Tukey box plots  of these errors, where the central line in each box is the median, the lower and the upper 
edges are the first and the third quartiles respectively. It is clear that, as the size of the observation index set ($|\Theta|$)
increases, values of the median, the first and the third quartiles of the errors decrease, which are all consistent  with 
the results of the diffusion test problems.

\begin{table}[!htp]
	\caption{Estimated CP ranks of each data tensor $\zX_e$ for $e=1,\ldots,10$, test problem 3.} 
	\centering	
	\begin{tabular}{c|cccccccccc}  
		\hline
		\diagbox{$|\Theta|$}{rank}{$e$} & 1 & 2 & 3 & 4 & 5 & 6 & 7 & 8 & 9 & 10 \\ \hline
		100 & 3 & 5 & 7 & 1 & 2 & 2 & 4 & 1 & 3 & --\\
		300 & 7 & 3 & 2 & 1 & 1 & 5 & 5 & 2 & 2 & 2\\
		600 & 7 & 7 & 7 & 4 & 2 & 2 & 2 & 1 & 4 & 7\\
		\hline
	\end{tabular}
	\label{table_cprank_f_stokes}
\end{table}   
\begin{figure}[!ht]
	\centering
	\subfloat[][Validation errors w.r.t CP ranks ]{\includegraphics[width=.2670\textwidth]{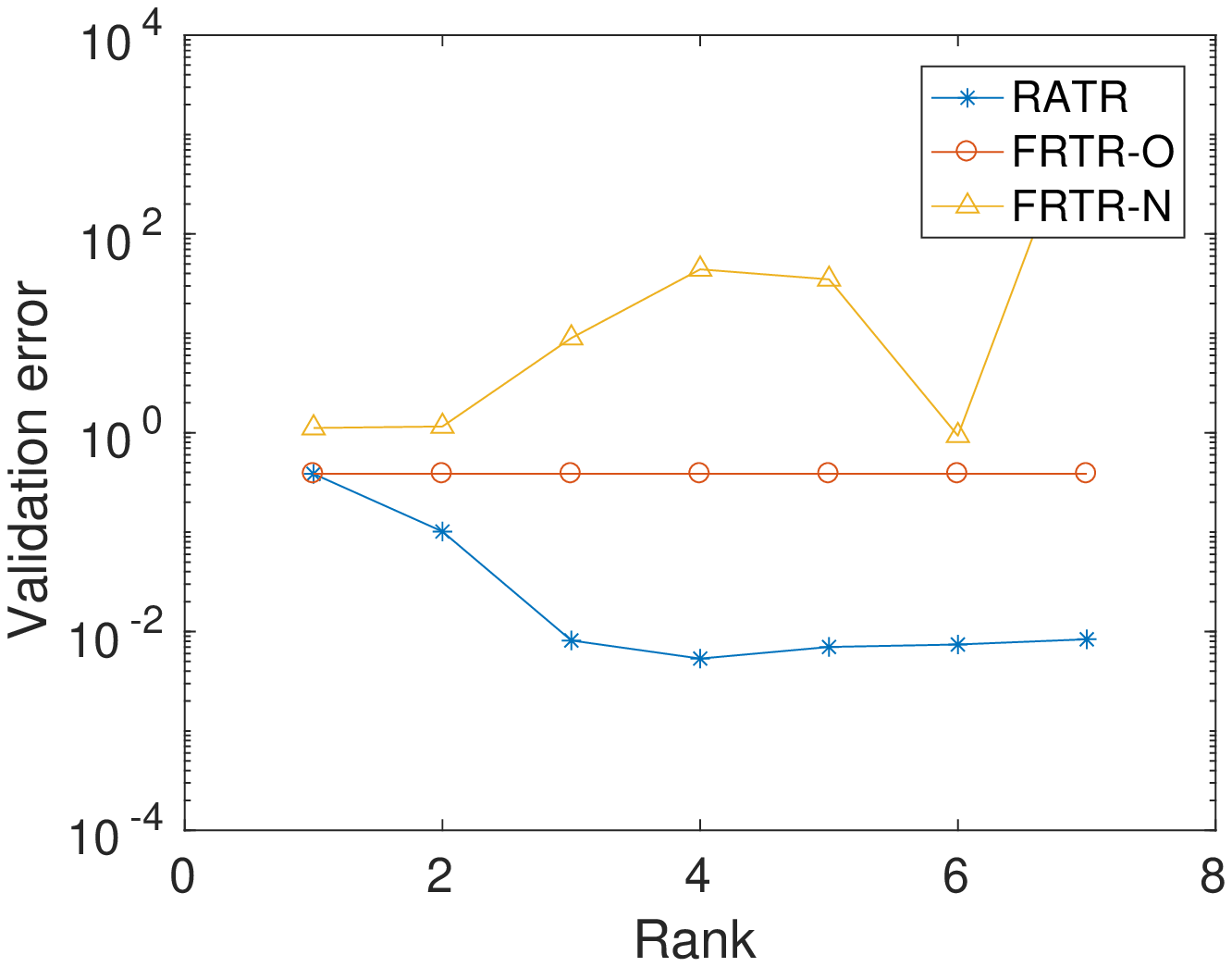}}\quad
	\subfloat[][Validation errors at $R=1$ ]{\includegraphics[width=.2670\textwidth]{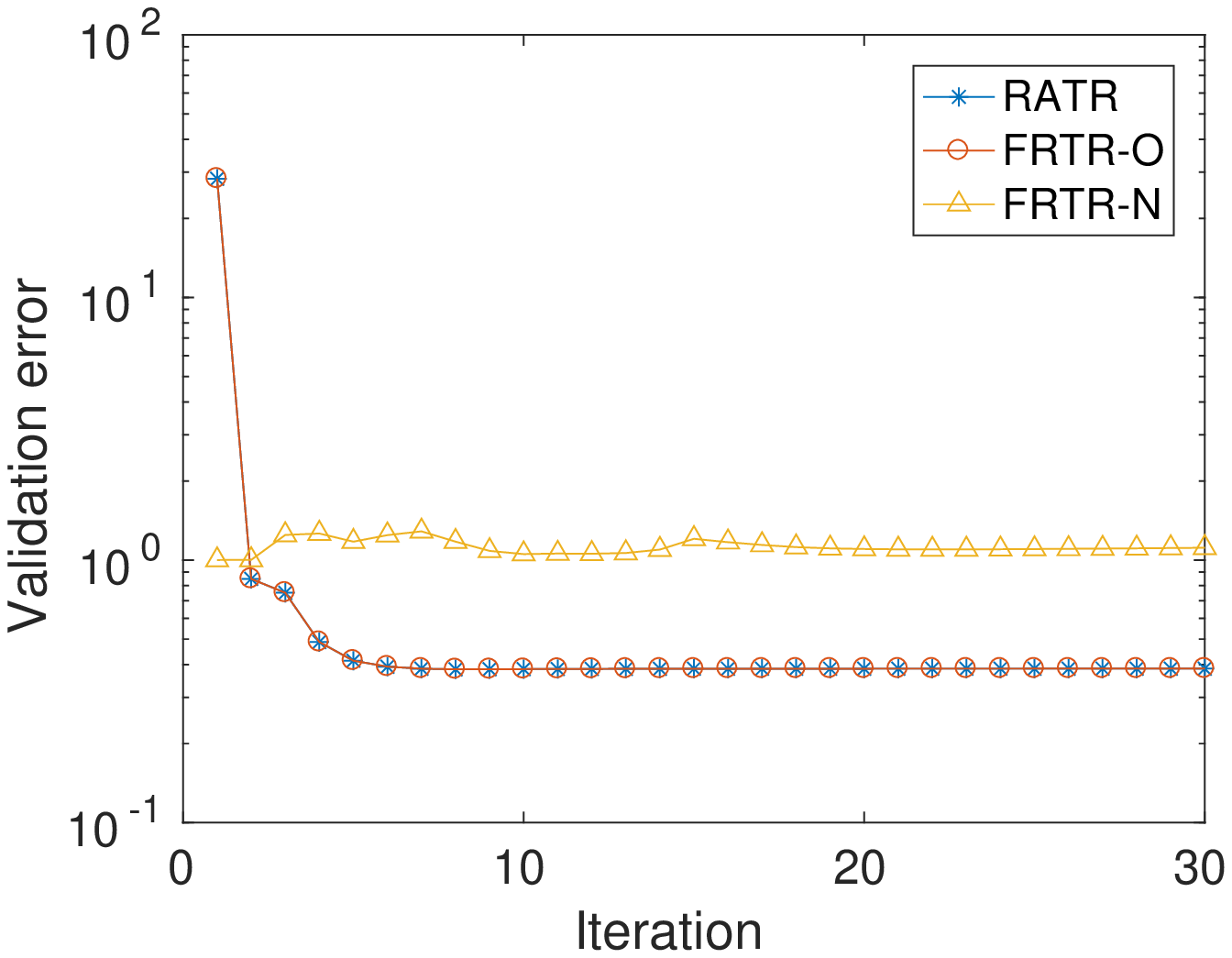}}\quad
	\subfloat[][Validation errors at $R=2$ ]{\includegraphics[width=.2670\textwidth]{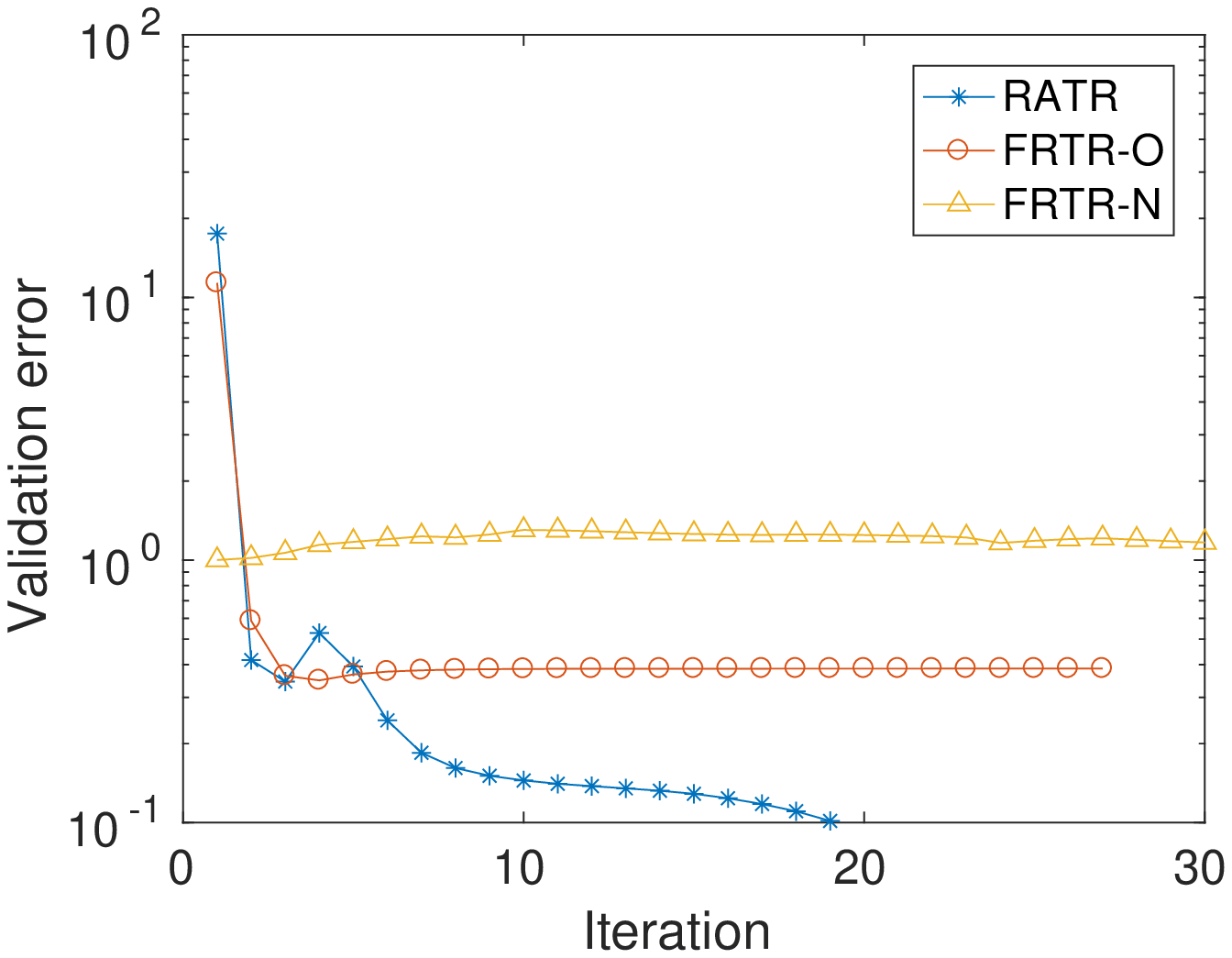}}\\
	\subfloat[][Validation errors at $R=3$ ]{\includegraphics[width=.2670\textwidth]{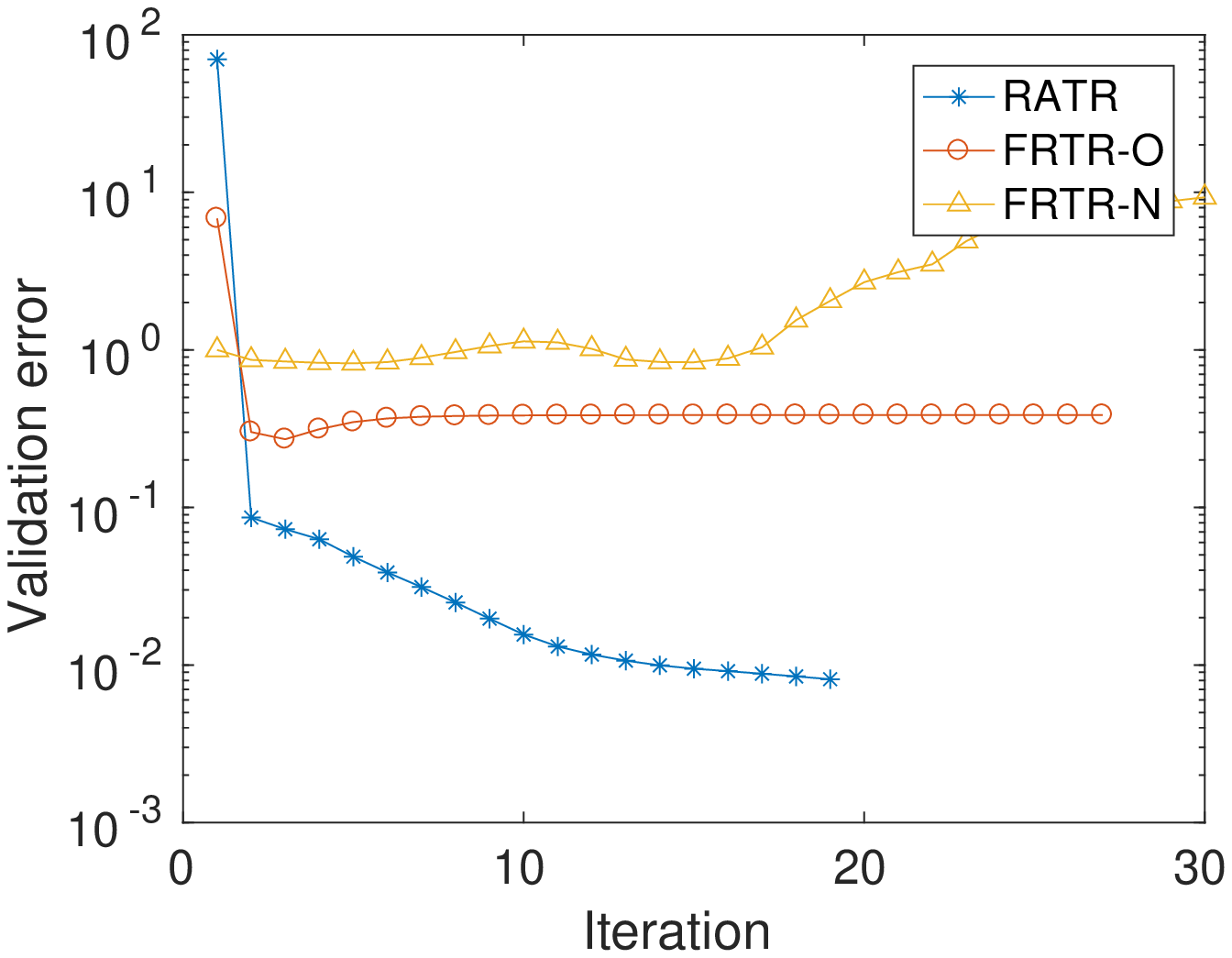}}\quad
	\subfloat[][Validation errors at $R=6$ ]{\includegraphics[width=.2670\textwidth]{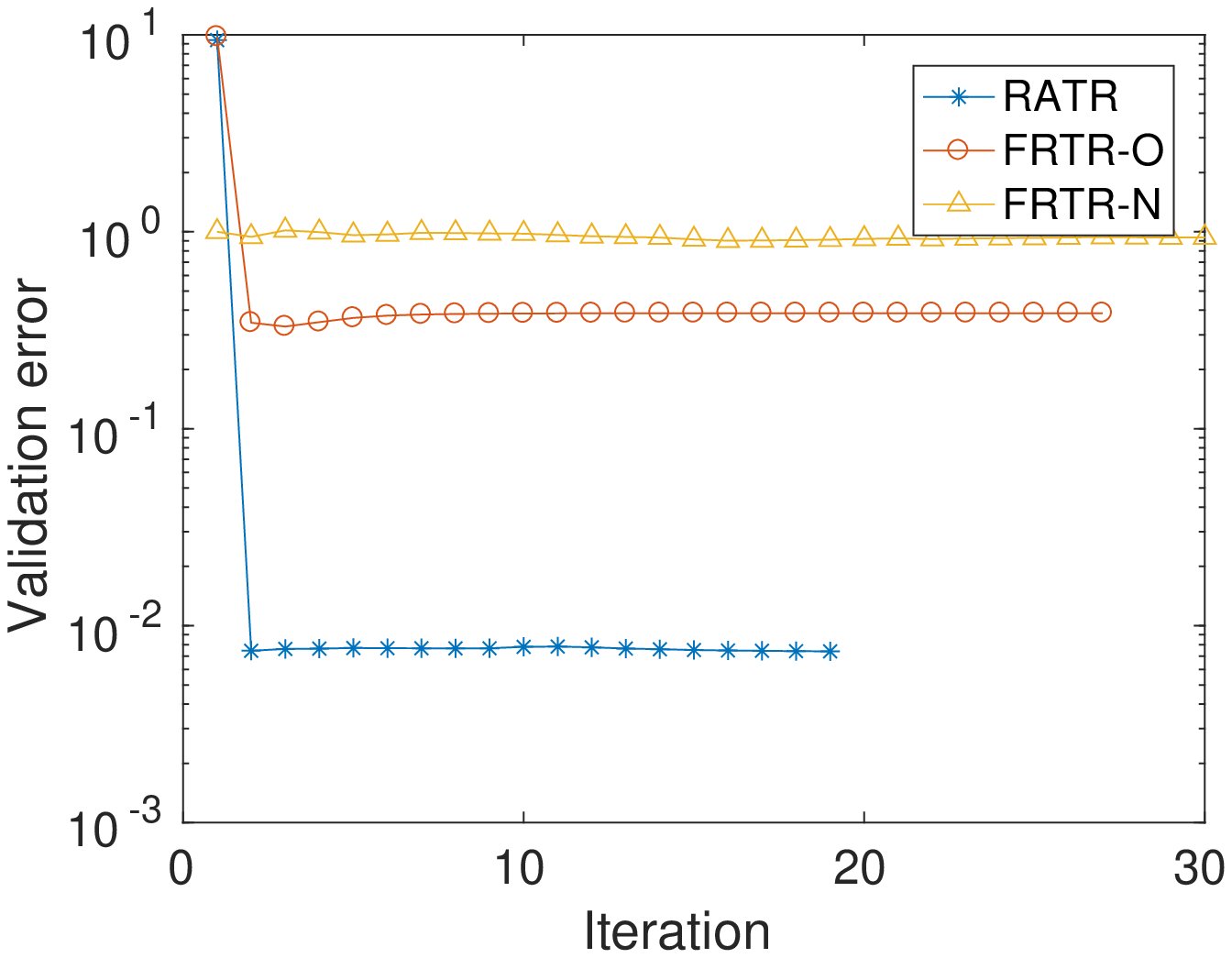}}\quad
	\subfloat[][Validation errors at $R=7$ ]{\includegraphics[width=.2670\textwidth]{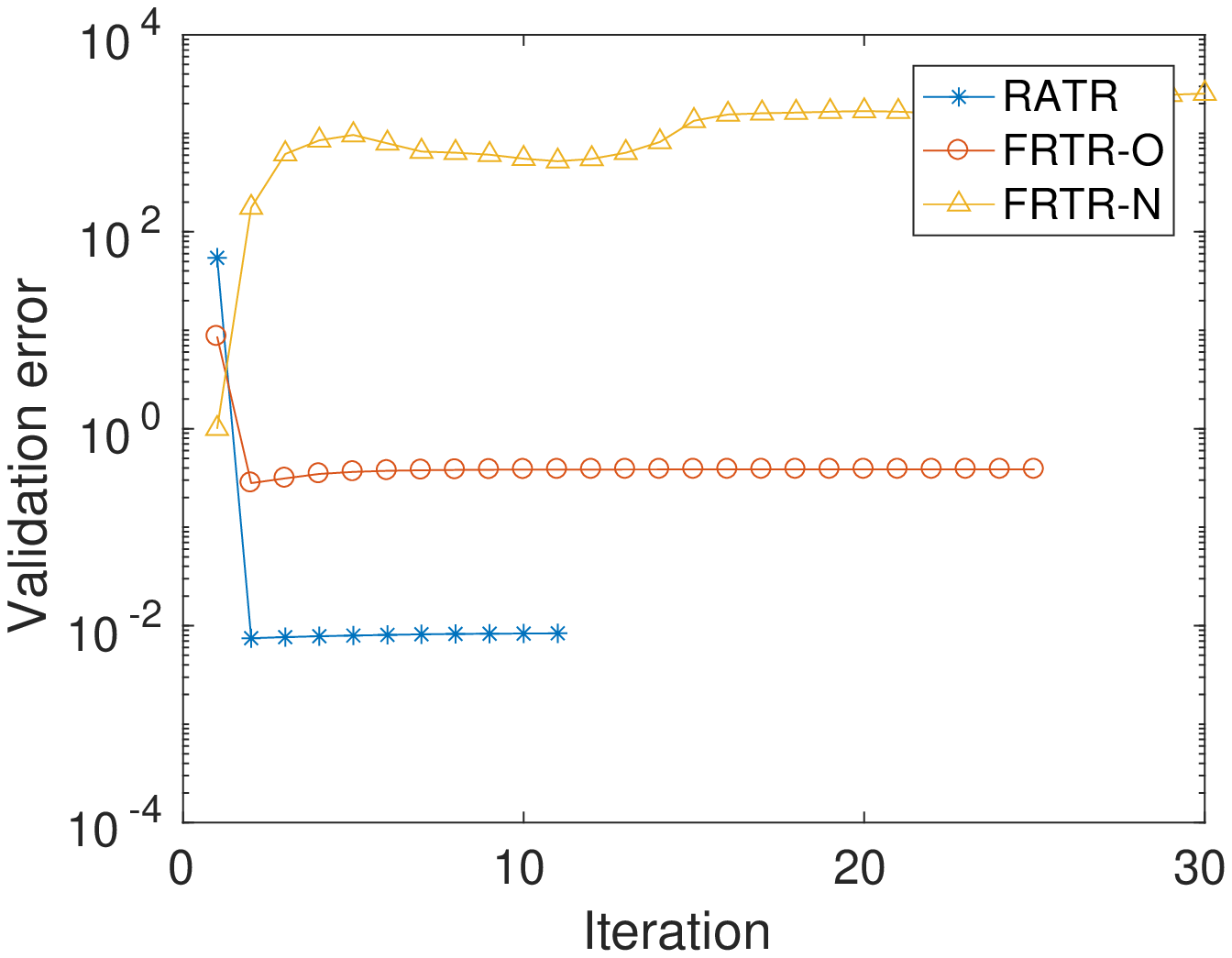}}\\
	\caption{Validation errors of rank adaptive tensor recovery (RATR),
		fixed-rank tensor recovery with initial factor matrices generated through $\mathrm{U}(1,2)$  (FRTR-O),
		and fixed-rank tensor recovery with initial factor matrices generated through $\mathrm{N}(0,1)$  (FRTR-N), 
		test problem 3.}
	\label{fig_sterror_iteration}
\end{figure}

\begin{figure}[!ht]
	\centering
	\subfloat[][1st kPCA mode]{\includegraphics[width=.4\textwidth]{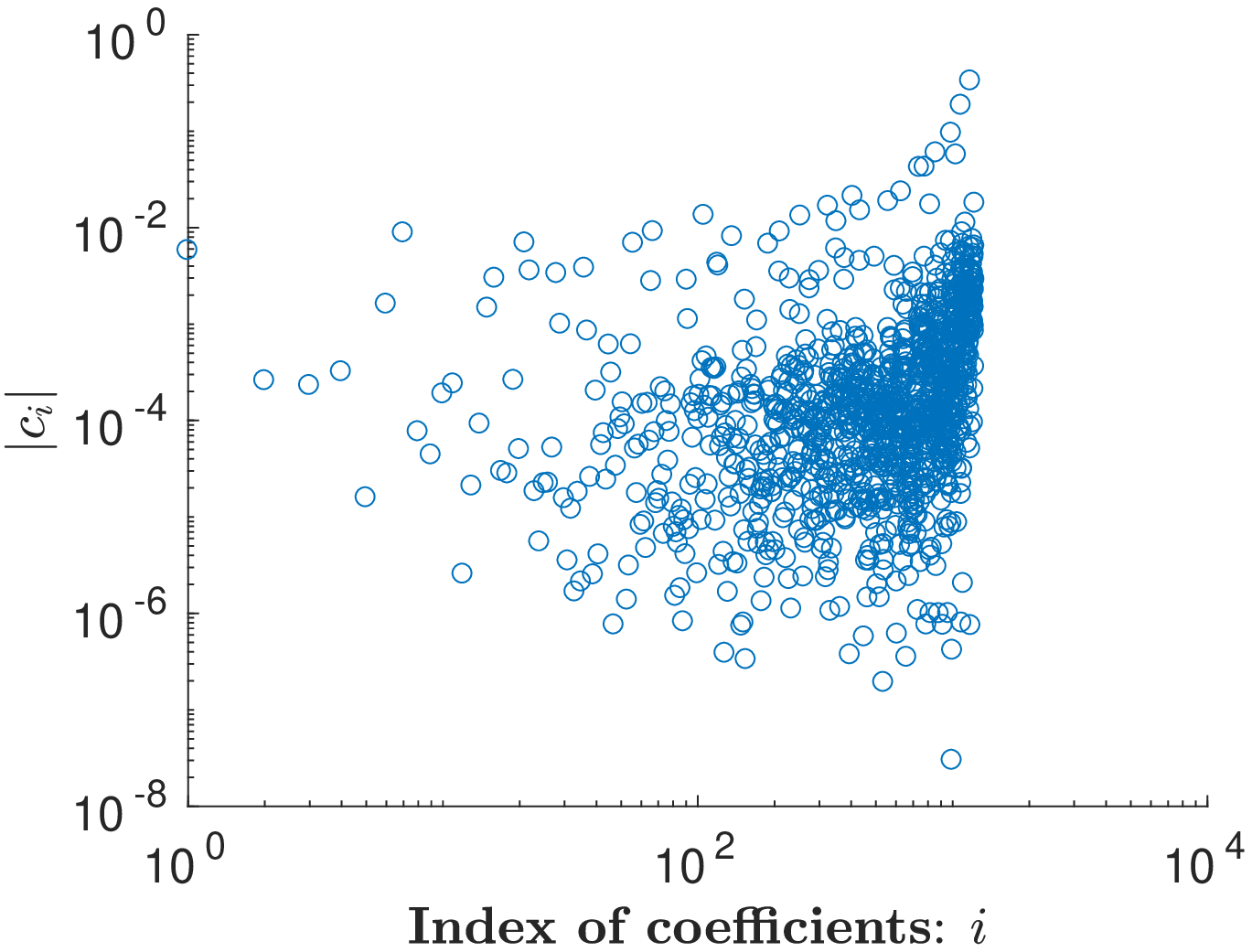}}\quad
	\subfloat[][2nd kPCA mode]{\includegraphics[width=.4\textwidth]{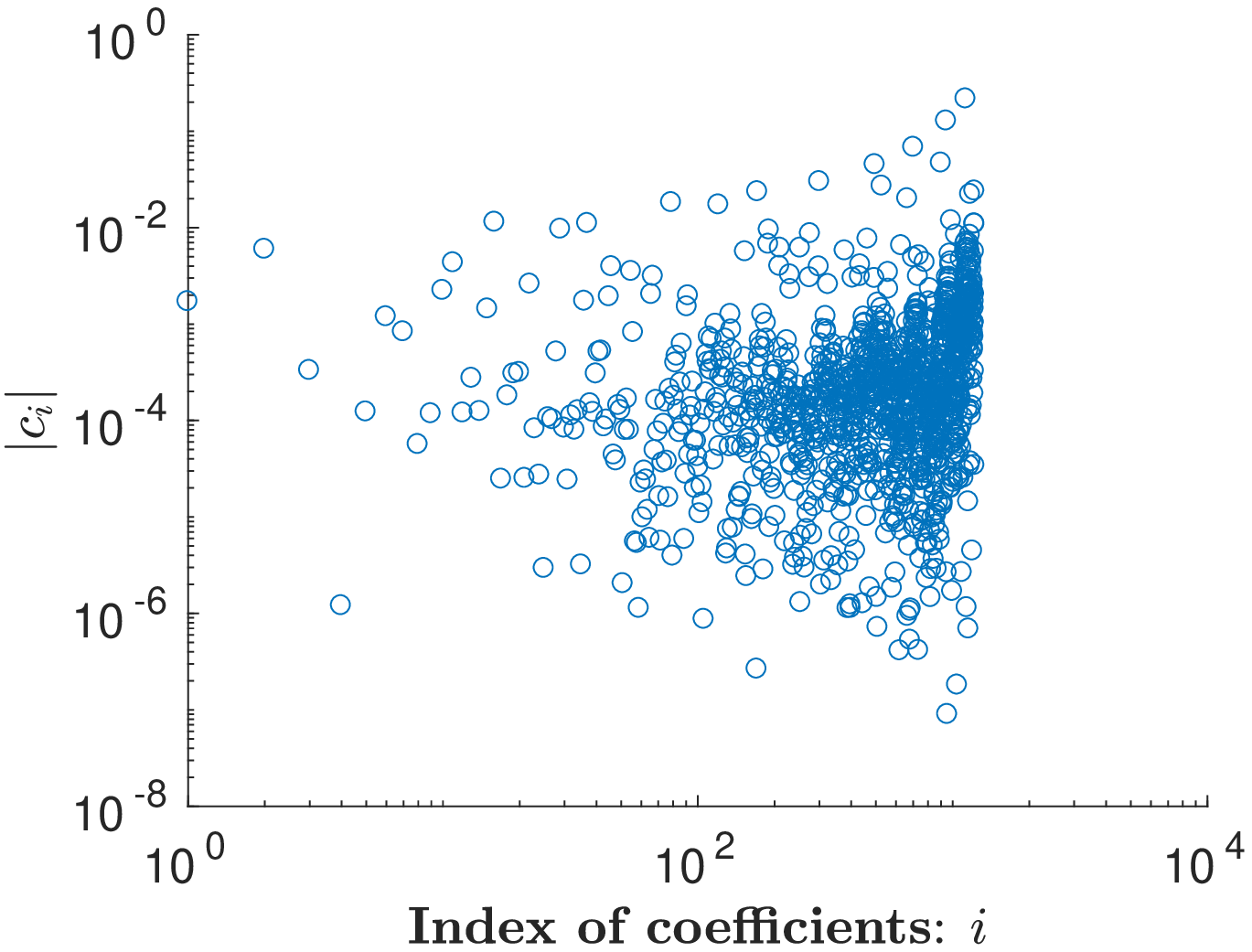}}\\
	\subfloat[][3rd kPCA mode]{\includegraphics[width=.4\textwidth]{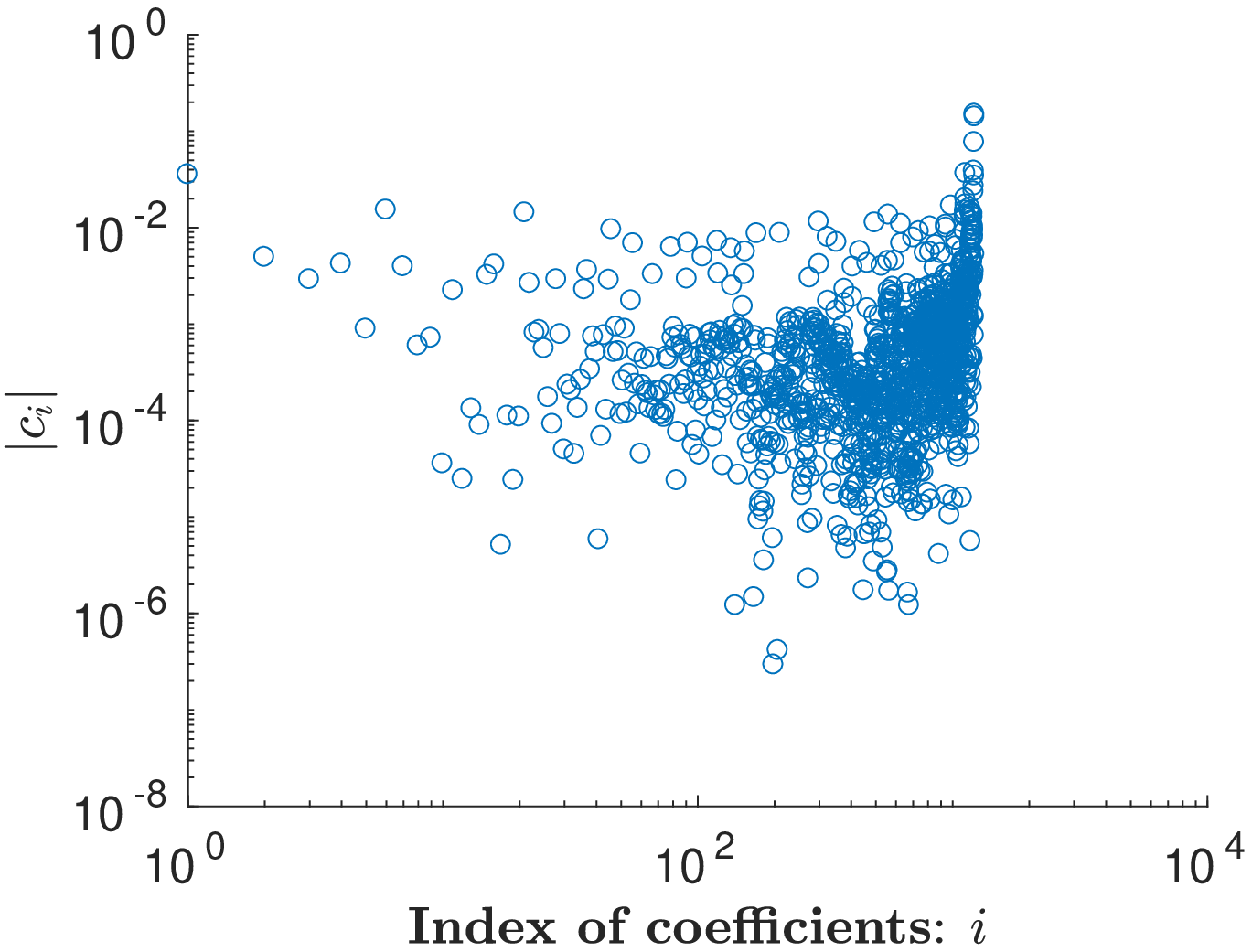}}\quad
	\subfloat[][4th kPCA mode]{\includegraphics[width=.4\textwidth]{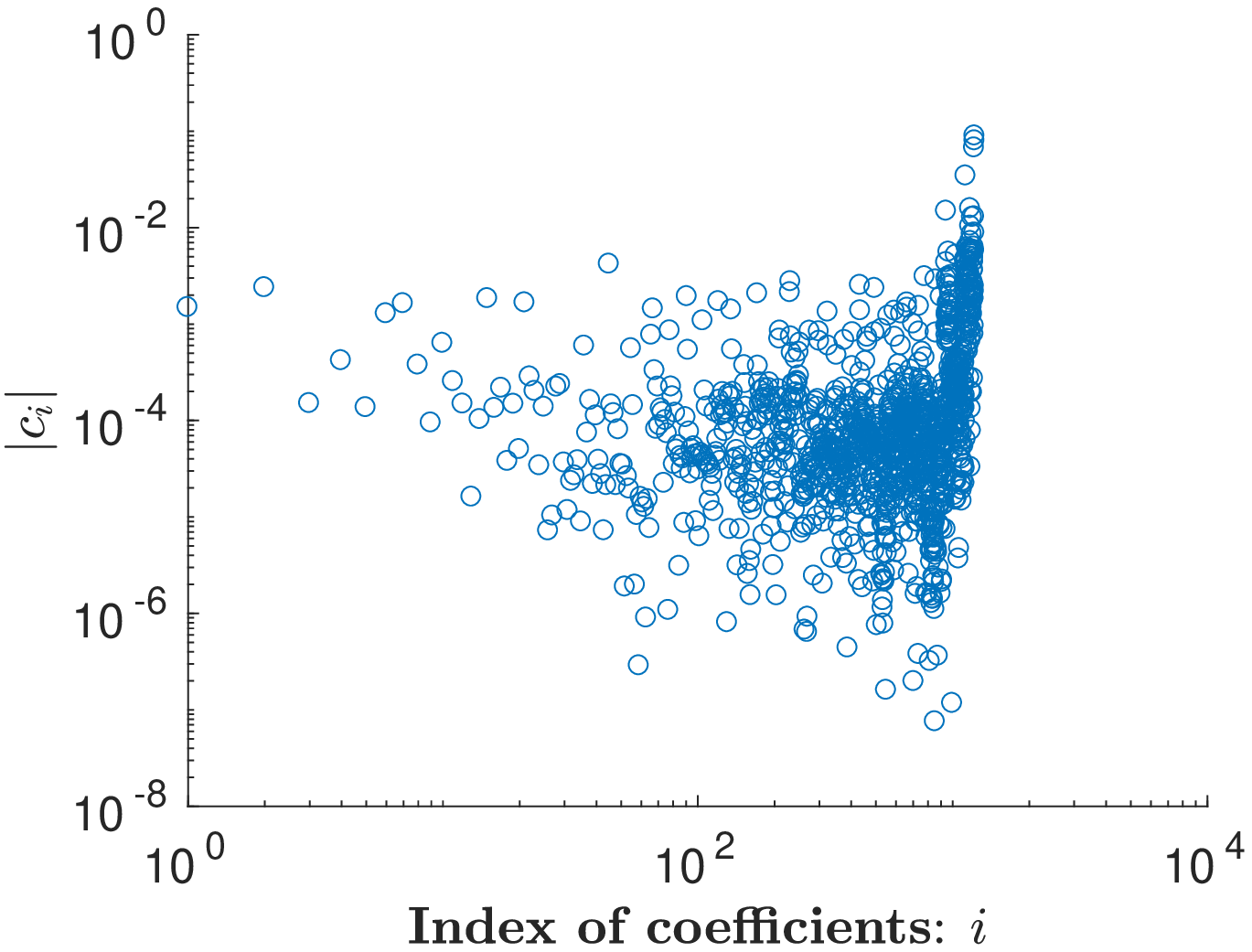}}
	\caption{Sparsity of gPC coefficients for each kPCA mode, test problem 3.}
	\label{fig_stokes_cpl1coeff1}
\end{figure}

\begin{figure}[!ht]
	\centering
	\subfloat[][Streamline: finite element solution]{\includegraphics[width=.445\textwidth]{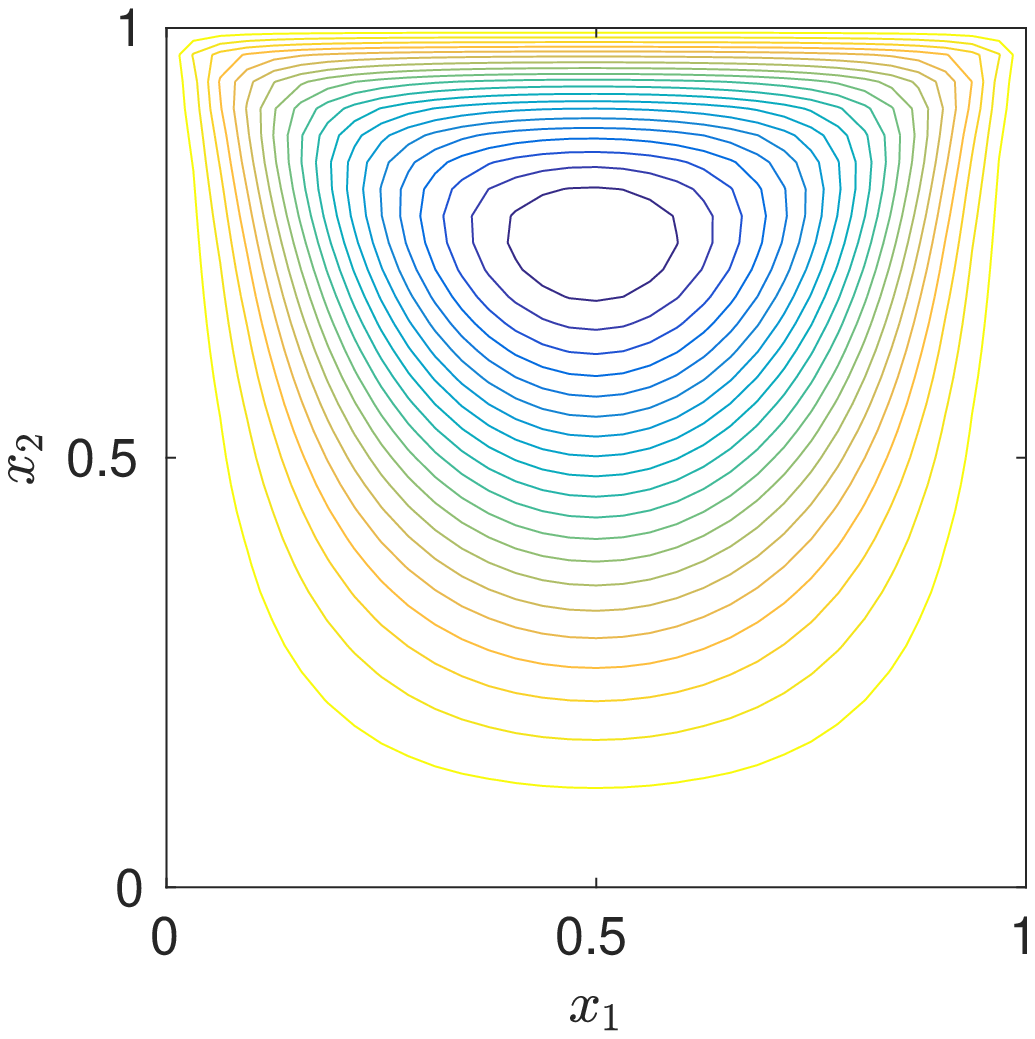}}\quad
	\subfloat[][Streamline: RATR-collocation approximation]{\includegraphics[width=.45\textwidth]{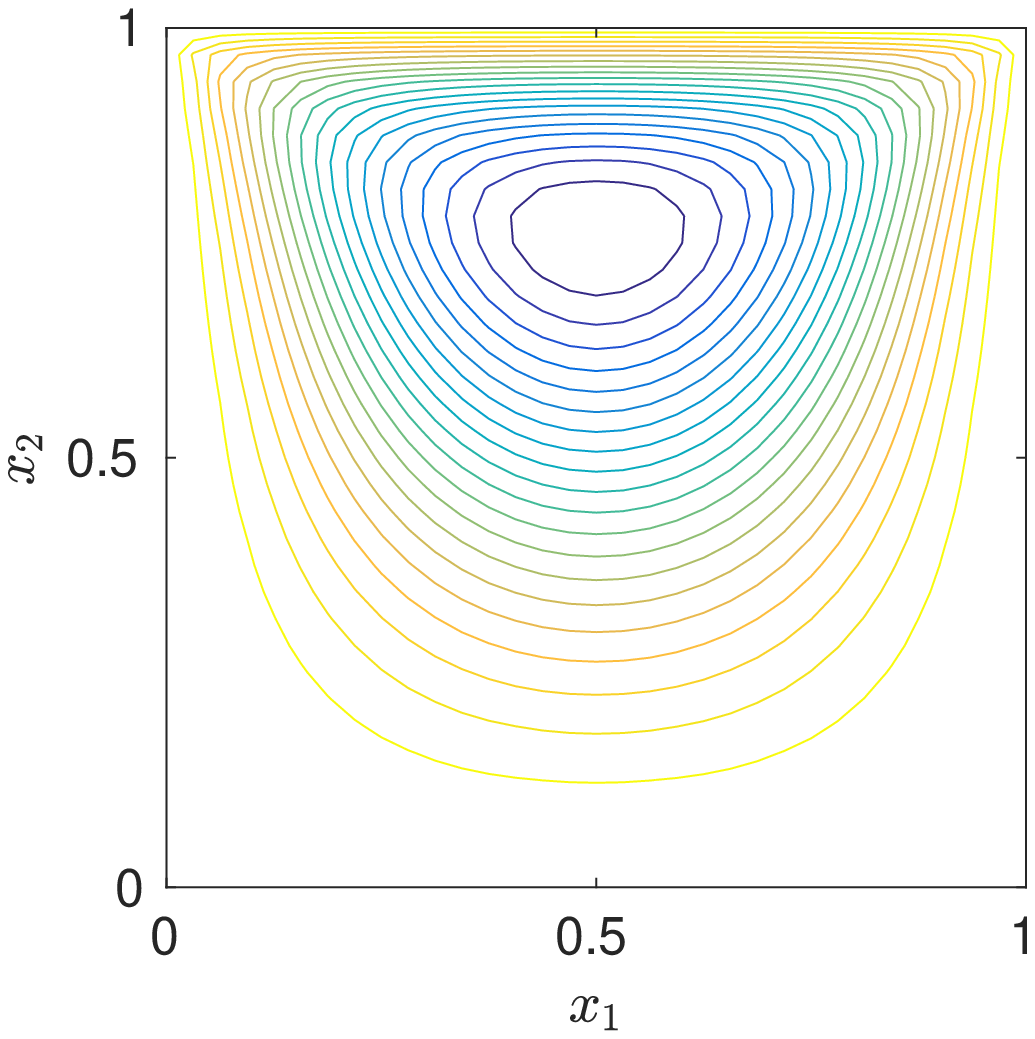}}\\
	\subfloat[][Pressure: finite element solution]{\includegraphics[width=.45\textwidth]{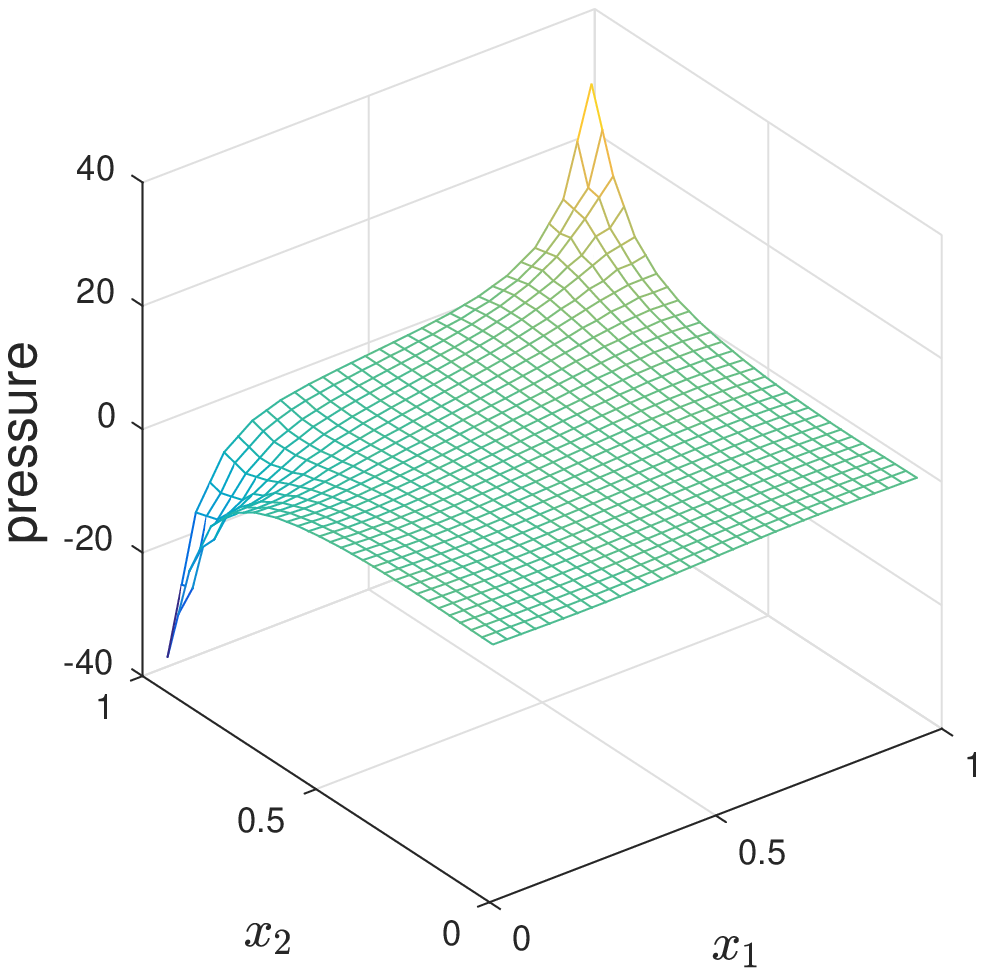}}\quad
	\subfloat[][Pressure: RATR-collocation approximation ]{\includegraphics[width=.45\textwidth]{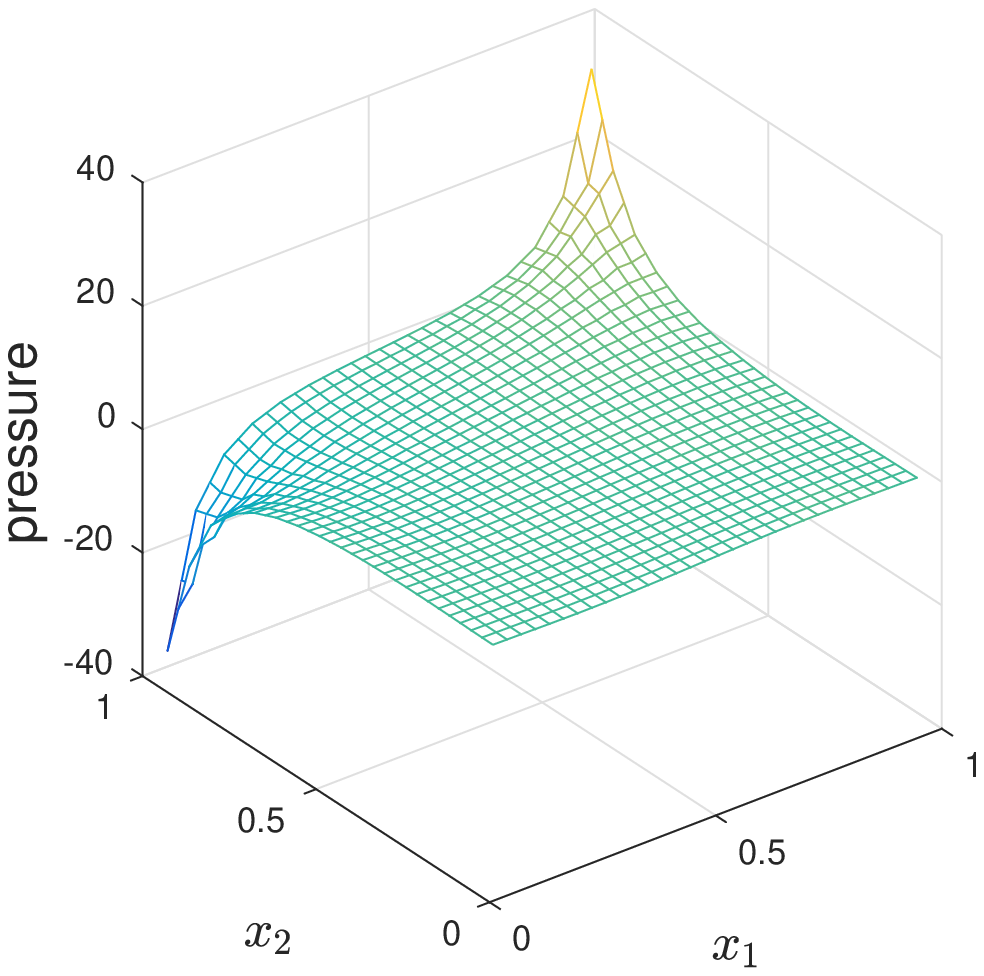}}
	\caption{The finite element solution and the RATR-collocation approximation (with $|\Theta|=600$) responding to a given realization of $\xi$,  test problem 3.}
	\label{fig_stokes_cpl1_sol}
\end{figure}
\begin{figure}[!htp]
	\centering
	\includegraphics[width=0.58\linewidth, height = 0.38\linewidth]{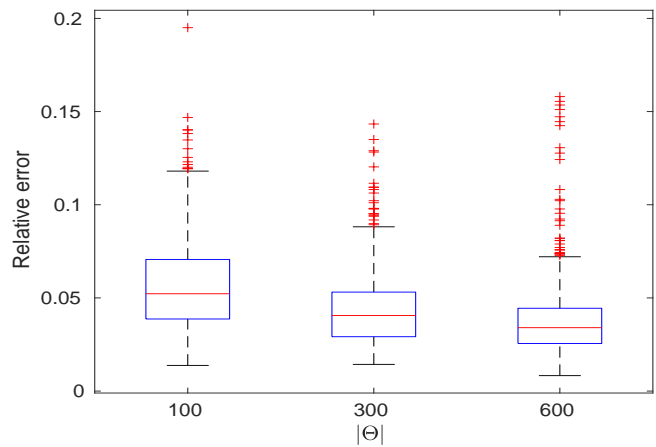}
	\caption{Relative errors of RATR-collocation approximation for 500 test samples, test problem 3.}
	\label{fig_stokes_cpl1box}
\end{figure}

\section{Conclusions}\label{section_conclude}
Exploiting potential low-dimensional  structures is a fundamental concept of efficient 
surrogate and reduced order modelling for high-dimensional UQ problems. With a focus on the tensor recovery based
stochastic collocation, our main conclusion is that our rank adaptive tensor recovery collocation (RATR-collocation)
approach  can efficiently exploit low-dimensional  structures in this challenging problem in two aspects: 
first, we reformulate stochastic colocation based on manifold learning,
where nonlinear low-dimensional  structures in the snapshots are captured through kPCA; second, 
our novel RATR algorithm automatically explores the low-rank structures in the data tensors for computing the collocation coefficients
without requiring a given tensor rank.
Moreover, another main contribution of this work is the analysis of RATR, 
where the stability of our initialization strategies and the rank-one update procedure for the non-convex optimization problems involved 
is proven theoretically, such that  a systematic guidance to initialize the the factor matrices is provided to result in efficient  and
stable recovery results. 
As the performance of RATR algorithm depends on the CP rank of the data tensor (although it does not need to be explicitly given),
our RATR-collocation is efficient  when the CP rank is small, while it may not be efficient for high-rank problems. 
A possible solution for efficiently recovering high-rank tensors is to conduct adaptivity with respect to physical properties 
of the underlying PDE models, e.g., domain decomposition methods. 
Designing and analyzing  such strategies will be the focus of our future work.

\bigskip
\textbf{Acknowledgments:}
This work is supported by the National Natural Science Foundation of China (No. 11601329) and 
the science challenge project (No. TZ2018001).

\section*{References}

\bibliography{ratrc}

\end{document}